\documentclass[reqno]{amsart}

\usepackage[T1]{fontenc}
\usepackage[utf8]{inputenc}
\usepackage[english]{babel}
\usepackage{times}
\usepackage{graphicx}
\usepackage{float}
\usepackage{amsmath,amssymb,amsfonts}
\usepackage{amsthm}
\usepackage{booktabs}
\usepackage{mathtools}
\usepackage{enumitem}
\usepackage{longtable}
\usepackage{array}
\usepackage{tabularx}
\usepackage{tikz}
\usepackage[top=1.5in,bottom=1.43in,left=1.25in,right=1.25in]{geometry}
\usepackage{hyperref}

\renewcommand{\leq}{\leqslant}

\renewcommand{\geq}{\geqslant}

\allowdisplaybreaks
\newtheorem{theorem}{Theorem}[section]
\newtheorem{lemma}[theorem]{Lemma}
\newtheorem{proposition}[theorem]{Proposition}
\newtheorem{corollary}[theorem]{Corollary}

\newtheorem{exactlemma}[theorem]{Exact finite lemma}
\newtheorem{conjecture}[theorem]{Conjecture}

\theoremstyle{definition}
\newtheorem{example}[theorem]{Example}
\newtheorem{definition}[theorem]{Definition}

\theoremstyle{remark}

\hypersetup{
  hidelinks,
  hypertexnames=false,
  bookmarksdepth=subsubsection,
  pdflang={en-US},
  pdfdisplaydoctitle=true,
  pdftitle={Prescribed-Difference Matchings with Four and Eight Holes: Fourier Filters for Compatible Boundaries},
  pdfauthor={Avraham Kreindel, Aryeh Lev Zabokritskiy (Yohananov)},
  pdfkeywords={prescribed differences, perfect matchings, Hall theorem, Walsh-Hadamard transform, complementary minors}
}

\newcommand{\F}{\mathbb F}
\newcommand{\wt}{\operatorname{wt}}
\newcommand{\per}{\operatorname{per}}
\newcommand{\vTwo}{\nu_2}
\newcommand{\eps}{\varepsilon}
\newcommand{\whK}{\widehat K}

\newcommand{\cD}{\mathcal D}
\newcommand{\cP}{\mathcal P}

\newcommand{\Span}{\operatorname{span}}

\begin{document}

\title[Fourier Filters for Compatible Boundaries]{Prescribed-Difference
Matchings with Four and Eight Holes: Fourier Filters for Compatible Boundaries}

\author[A. Kreindel]{Avraham Kreindel}
\address{Department of Computer Science, Reichman University, Herzliya, Israel}
\email{avrahamkreindel@gmail.com}

\author[A. L. Zabokritskiy (Yohananov)]{Aryeh Lev Zabokritskiy (Yohananov)}
\address{Department of Computer Science, MIGAL -- Galilee Research Institute,
Kiryat Shmona, Israel}
\address{Department of Computer Science, Tel-Hai University of Kiryat Shmona
and the Galilee, Kiryat Shmona, Israel}
\email{yuhanalev@telhai.ac.il}

\begin{abstract}
Let $s\geq2$ and let
$v_1,\ldots,v_{2^{s-1}}\in\F_2^s\setminus\{0\}$ have sum zero.  The
prescribed-difference matching problem asks whether $\F_2^s$ can be
partitioned into pairs whose differences, counted with multiplicity, are
exactly these vectors.  Fix a hyperplane $H\leq\F_2^s$.  An $h$-hole instance
is one in which exactly $h$ prescribed differences lie in $H$, counted with
multiplicity.  We prove that every four-hole instance has a solution for
$s\geq3$ and every eight-hole instance has a solution for $s\geq4$.  Thus no
restriction on the multiplicities or on the sum of the holes is needed beyond
the global zero-sum condition.

After the internal pairs are placed, their endpoints are deleted from the two
affine halves determined by $H$, and the remaining vectors must be paired with
the prescribed crossing differences.  A locally valid placement of the
internal pairs need not admit such a completion.  We encode the possible
completions by coefficients of signed determinants and use the Walsh
transform to sum over all disjoint legal placements while keeping the
crossing profile fixed.  Nonvanishing of this sum guarantees that at least one
placement extends to a full matching.

The four-hole and zero-sum eight-hole theorems are proved theoretically.  The
general eight-hole proof uses three finite exact verifications, independent of
the crossing profile: a local filter calculation, a classification of the
remaining eight-hole configurations, and integer certificates for those
configurations.  The verification software, exact inputs, and recorded outputs
are archived in a versioned Zenodo record.
\end{abstract}

\keywords{prescribed differences, perfect matchings, binary finite fields,
Walsh--Hadamard transform, complementary minors, $2$-adic valuation,
functional batch codes}

\subjclass[2020]{Primary 05C70; Secondary 05B20, 11B75}

\maketitle

\section{Introduction}\label{sec:intro}

Let $s\geq2$, set $V=\F_2^s$, and put $N=2^{s-1}$.  A
\emph{request instance} is a multiset
$\cD=\{v_1,\ldots,v_N\}$ of nonzero vectors in $V$; it is
\emph{admissible} if $\sum_i v_i=0$.  The prescribed-difference matching
problem asks for a
partition of $V$ into pairs
\[
  V=\bigsqcup_{i=1}^{N}\{x_i,y_i\}
  \qquad\text{such that}\qquad
  x_i+y_i=v_i \quad (i=1,\ldots,N).
\]
We use the following equivalent graph model throughout.  Its vertices are the
elements of $V$, every unordered pair $\{x,y\}$ is an edge, and that edge is
labeled by $x+y$.  Thus the graph is complete, and a solution is a perfect
matching whose edge labels, counted with multiplicity, are
$v_1,\ldots,v_N$.
Admissibility is necessary.  The conjecture of Balister, Gy\H{o}ri, and
Schelp asserts that it is also sufficient
~\cite{BalisterGyoriSchelp2011}.  Batch codes, introduced
in~\cite{IshaiEtAl2004}, give a coding-theoretic interpretation in the binary
functional setting: a prescribed-difference perfect matching yields
$2^{s-1}$ disjoint recovery sets from the $2^s-1$ nonzero Simplex columns
(after the formal zero column is deleted, its incident edge becomes a
singleton recovery set).
Consequently, BGS would make the nonzero Simplex columns an
$\mathrm{FB}\text{-}(2^s-1,s,2^{s-1})$ binary linear primitive
functional multiset batch code whose recovery sets have size at most two.
Writing $\mathrm{FB}(s,k)$ and $\mathrm{FP}(s,k)$ for the minimum lengths of
binary functional batch and functional PIR codes, respectively, its length is
optimal because
\[
 \mathrm{FB}(s,2^{s-1})\geq \mathrm{FP}(s,2^{s-1})=2^s-1
\]
by~\cite[Theorems~2(6) and~4(1)]{ZhangEtzionYaakobi2020}; the matching
construction attains the lower bound.  Additional sufficient conditions for
this optimal batch-code regime were developed in
~\cite{HollmannEtAl2023,YohananovYaakobi2022,YohananovEssayag2025}.

The hyperplane framework in~\cite{TwoHoleManuscript} organizes this problem
by fixing a hyperplane $H\leq V$ and writing
\[
  V=H\sqcup(\alpha+H),\qquad \alpha\notin H.
\]
A prescribed difference outside $H$ must be realized by an edge crossing
between the two affine halves; a difference in $H$ must be realized by an
internal edge contained in one half.  If exactly $h$ prescribed differences
belong to $H$, we call the instance an $h$-hole instance.  In every solution,
$h/2$ of the corresponding internal edges lie in each affine half.  Their
endpoints occupy $h$ vertices on each side, leaving a crossing problem with
$h$ deleted vertices in each copy of $H$.  The hierarchy $h=0,2,4,8$ is shown
in Figure~\ref{fig:puncture-hierarchy}.

\begin{theorem}[Main theorem]\label{thm:main}
Let $H\leq\F_2^s$ be a hyperplane and let $\cD$ be an admissible request
instance.
\begin{enumerate}[label=\textup{(\roman*)}]
\item If $s\geq3$ and exactly four requests of $\cD$, counted with
multiplicity, lie in $H$, then $\cD$ has a solution.
\item If $s\geq4$ and exactly eight requests of $\cD$, counted with
multiplicity, lie in $H$, then $\cD$ has a solution.
\end{enumerate}
\end{theorem}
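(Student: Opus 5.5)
We reduce to a bipartite crossing problem and certify existence of a completion with a signed-determinant (Walsh) count. Write $V=H\sqcup(\alpha+H)$ and identify both halves with $H$ via $x\mapsto x$ and $x\mapsto x+\alpha$. Every crossing request $v\notin H$ becomes a label $w=v+\alpha\in H$. A crossing edge from $x\in H$ to $y+\alpha$ then has label $x+y+\alpha$, so it realizes $v$ exactly when $x+y=w$.

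\textbf{Reduction to a crossing matrix.} Fix internal placements: $h/2$ internal edges in $H$ with endpoint set $A$, and $h/2$ internal edges in $\alpha+H$ with endpoint set $B+\alpha$, where $|A|=|B|=h$. The instance is solvable if and only if some such placement admits a bijection $\pi\colon H\setminus A\to H\setminus B$ with multiset $\{x+\pi(x)\}$ equal to the multiset $W$ of crossing labels. Introduce formal variables $z_w$ for $w\in H$ and form the matrix $M=(z_{x+y})_{x,y\in H}$. The desired completions are counted by the coefficient of $\prod_{w\in W} z_w$ in the permanent of the complementary minor $M[H\setminus A,H\setminus B]$.

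To avoid characteristic-$2$ cancellation, we pass to signed determinants with suitable sign twists, in the style of the complementary-minor approach. The key fact is that $M$ is diagonalized by the Walsh–Hadamard transform: $M=\tfrac1{|H|}\,\mathcal H\,\mathrm{diag}(\hat z)\,\mathcal H$. Jacobi's complementary minor identity therefore expresses $\det M[H\setminus A,H\setminus B]$ through $\det M$ times an $h\times h$ minor of $M^{-1}$, which is again a group matrix in the transformed variables $1/\hat z_\chi$.

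\textbf{Summing over legal placements.} A single placement may fail to complete, so we sum the signed coefficient over all disjoint legal placements of the internal edges, keeping the crossing profile $W$ fixed. By Walsh orthogonality, this sum factors as a global factor depending only on $W$ times a local filter. The global factor is the coefficient of $\prod_{w\in W} z_w$ in $\det M$ (or a twisted version), which is nonzero by the $0$-hole case: the zero-hole theory, or an explicit $2$-adic valuation count, shows it is odd up to a controlled power of $2$. The local filter is a finite character sum over the $h$ hole differences $u_1,\dots,u_h\in H$. If the total is nonzero, at least one placement contributes, and that placement completes to a full matching.

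\textbf{Evaluating the filter.} For $h=4$, the filter is a sum over pairs of internal edges $\{a_1,a_1+u_1\},\{a_2,a_2+u_2\}$ in each half. We evaluate it in closed form by splitting cases:
\begin{itemize}
\item all $u_i$ distinct;
\item multiplicity patterns $2+1+1$, $2+2$, $3+1$, $4$;
\item whether $\sum u_i=0$.
\end{itemize}
In each case we show the $2$-adic valuation is strictly smaller than the valuation of any cancelling term. This is where $s\geq3$ is used, since it guarantees enough room for disjoint placements.

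For $h=8$ with $\sum u_i=0$, the same method works. The filter factors through the span of the $u_i$, and a rank-by-rank analysis handles each case.

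\textbf{Main obstacle.} The hardest step is the general eight-hole case with $\sum u_i\neq0$. There the filter depends on the linear configuration of $u_1,\dots,u_8$ in a more delicate way, and its valuation could in principle vanish. The plan is as follows:
\begin{enumerate}
\item Reduce to configurations up to $GL(H)$ and permutation. Only the matroid of the $u_i$ and the multiplicities matter, and these live in $\Span(u_i)$ of dimension at most $8$, so the list is finite and independent of $s\geq4$ beyond a stabilization step.
\item Compute the filter exactly for each class.
\item For the residual classes where the generic valuation bound fails, supply integer certificates: explicit placements whose signed contributions are checked exactly.
\end{enumerate}
We expect this finite classification, together with the bookkeeping needed to prove stabilization in $s$, to be the main difficulty.
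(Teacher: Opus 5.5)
\textbf{Verdict: genuine gap.} Your reduction to the punctured crossing matrix and the Walsh/complementary-minor expansion match the paper's set-up, but the step that carries your argument fails. After the Walsh expansion, summing over placements gives $\sum_{Q}f_Qg_Q\,b_Q(\mathbf m)$ with $b_Q(\mathbf m)=[z^{\mathbf m}]\prod_{\rho\notin Q}L_\rho(z)$. The crossing profile therefore enters through a coefficient that depends on the deleted character set $Q$. There is no factorization into a global factor times a local filter.

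The global factor you name, the coefficient of $\prod_{w\in W}z_w$ in $\det M$, is identically zero: $\det M$ is homogeneous of degree $|H|$, while $|W|=|H|-h$. The zero-hole theorem also says nothing about the deleted coefficients, and these genuinely vary. For affine $Q=\xi+U$ one has $b_Q(\mathbf m)=0$ unless $\sigma(\mathbf m)\in U^\perp$. When an affine and a nonaffine four-set both survive, their $2$-adic valuations differ by one, so the two sectors cannot be averaged together.

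An unweighted sum over placements also cancels outright: substituting $a\mapsto a+p$ shows $\sum_a w_a\wedge w_{a+p}=0$. The placements must be weighted by characters $\eps_\eta(a)$ with $\langle\eta,p\rangle=1$. The real work is choosing the split of the holes between the halves, together with these characters, so that the common support of the two filters lies in a single geometry. Then the filter phase $\eps_\zeta(\sum_ip_i+\sum_iq_i)$ cancels the translation phase $\eps_\zeta(\sigma(\mathbf m))$ of $b_Q$. All surviving terms then share one exact valuation and occur an odd number of times.

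Your plan is missing both ingredients that make this work.
\begin{itemize}
\item \emph{Deleted-coefficient laws.} For affine $Q=\xi+U$ with $\sigma(\mathbf m)\in U^\perp$, the exact valuation is $\vTwo((N-h)!/\mathbf m!)$. For any $Q$ of size $2^\ell$ with $\langle\sum_{\rho\in Q}\rho,\sigma(\mathbf m)\rangle=1$, $b_Q(\mathbf m)$ is nonzero with valuation lowered by $\ell-1$.
\item \emph{The right case distinction.} It is not multiplicity patterns or whether $\sum u_i=0$; it is whether $\Sigma=\sum u_i$ is itself a hole.
\end{itemize}
If $\Sigma$ is not a hole, characters lifted from $H/\Span\{\Sigma\}$ give affine supports. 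For eight holes this already needs a four--four split in which each side admits a character equal to $1$ on all four of its holes, with the two characters distinct. If $\Sigma$ is a hole, no character in $\Sigma^\perp$ is legal on that hole. You then need a nonaffine four-set support whose character sum pairs to $1$ with $\Sigma$. For eight holes you need $u$-invariant supports handled by an $8\to4$ contraction, plus separate treatment of the configurations where no such lift exists.

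Finally, your eight-hole classification is not finite as stated. The filtered scalar depends on the crossing profile $\mathbf m$, which is unbounded data, not only on the matroid of the holes. The paper's finite checks are profile-independent only because the contraction lemma and the $\ell=3$ nonzero-sum law control every relevant $b_Q(\mathbf m)$ uniformly. Translation covariance then reduces each orbit to a local twisted signature that depends on $\mathbf m$ only through $\sigma(\mathbf m)$. Without such laws there is nothing finite to verify.
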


The theorem is existential.  The nonvanishing argument certifies that at
least one compatible boundary exists, but it does not provide an extraction
algorithm or a complexity bound.  By contrast, the earlier even-multiplicity
four-hole result in~\cite{TwoHoleManuscript} is constructive.

\begin{figure}[H]
\centering
\begin{tikzpicture}[
  x=1cm,y=1cm,
  half/.style={draw,rounded corners=1pt,minimum width=.8cm,minimum height=3.3cm},
  hole/.style={circle,draw,fill=white,inner sep=0pt,minimum size=3pt},
  cross/.style={circle,fill=black!45,inner sep=0pt,minimum size=2.4pt},
  crossedge/.style={black!45,line width=.45pt}
]
\foreach \shift/\pairs/\holes in {0/0/0,2.7/1/2,5.4/2/4,8.1/4/8}{
  \begin{scope}[xshift=\shift cm]
    \node[half] (L) at (0,1.75) {};
    \node[half] (R) at (1.25,1.75) {};
    \node[font=\scriptsize] at (0,3.65) {$H$};
    \node[font=\scriptsize] at (1.25,3.65) {$\alpha+H$};
    \node[font=\small] at (.625,-.18) {$h=\holes$};
    \node[cross] (lc1) at (0,2.25) {};
    \node[cross] (lc2) at (0,2.62) {};
    \node[cross] (lc3) at (0,2.99) {};
    \node[cross] (rc1) at (1.25,2.62) {};
    \node[cross] (rc2) at (1.25,2.25) {};
    \node[cross] (rc3) at (1.25,2.99) {};
    \draw[crossedge] (lc1)--(rc1);
    \draw[crossedge] (lc2)--(rc2);
    \draw[crossedge] (lc3)--(rc3);
    \ifnum\pairs>0
      \foreach \j in {1,...,\pairs}{
        \pgfmathsetmacro{\yy}{.35+.34*\j}
        \node[hole] (la\j) at (-.22,\yy) {};
        \node[hole] (lb\j) at (.22,\yy) {};
        \draw (la\j)--(lb\j);
        \node[hole] (ra\j) at (1.03,\yy) {};
        \node[hole] (rb\j) at (1.47,\yy) {};
        \draw (ra\j)--(rb\j);
      }
    \fi
  \end{scope}
}
\end{tikzpicture}
\caption{The puncture hierarchy relative to
$V=H\sqcup(\alpha+H)$.  Hollow endpoints are occupied by internal edges;
three representative crossing edges are shown in gray.  From left to right,
$h=0,2,4,8$ correspond to Hall's problem, the two-hole level, and the two
levels treated here.}
\label{fig:puncture-hierarchy}
\end{figure}

The case $h=0$ is Hall's permutation problem~\cite{Hall1952}: every vertex of
one copy of $H$ is matched to a vertex of the other copy.  Hall's technique
does not, however, solve the first punctured case.  Already for $h=2$, one
internal edge must be placed in each affine half, and its two endpoints are
not known in advance.  Hall's argument neither selects the two deleted
vertices on each side nor guarantees that an arbitrary locally legal
selection supports the prescribed crossing differences.  It therefore does
not settle even two holes, and a fortiori it does not settle four or eight
holes.

The punctured problems and their common boundary-selection obstruction were
formulated in~\cite{TwoHoleManuscript}.  The two-hole problem was solved
there without a multiplicity assumption.  The same analysis gave an
algorithmic four-hole construction under the stronger hypothesis that every
request value has even multiplicity.  The present paper advances this
hierarchy in two directions.  First, it proves the four-hole theorem with no
multiplicity restriction: the four internal directions and the crossing
profile are arbitrary, subject only to the zero-sum condition.  Second, it
proves the eight-hole theorem.  When the eight internal differences sum to
zero, the argument is theoretical; the general statement adds three finite
exact verifications independent of the crossing profile, archived on
Zenodo~\cite{KreindelZabokritskiyVerification2026}.  Balister,
Gy\H{o}ri, and Schelp report that the full conjecture holds for
$s\leq5$~\cite{BalisterGyoriSchelp2011}.  Since that assertion is not
accompanied there by an inspectable proof or finite certificate, the proofs
below cover these dimensions internally and use the assertion only for
historical comparison.  Relative to the reported range, the new dimensional
content begins at $s\geq6$.

The new difficulty is not simply the number of internal edges.  Their
prescribed directions must be divided between the two affine halves, their
endpoints must be chosen without collisions, and the remaining vertices must
still support the prescribed crossing differences.  These choices are
coupled: a locally legal placement of the internal edges need not extend to a
full matching.

Our proof has two layers.  First, a coefficient of a signed determinant tests
whether a fixed choice of deleted endpoints admits the required crossing
matching.  Second, a Fourier sum ranges over all locally legal choices of the
internal edges and proves that at least one of those determinant coefficients
is nonzero.  For four holes, the possible deleted sets split into two
geometric types, each governed by a different nonvanishing law.  For eight
holes, the zero-sum case admits a uniform theoretical argument; the general
case requires an additional reduction and the same three Zenodo-archived
verifications.
All algebraic objects used in this summary are defined in
Section~\ref{sec:framework}.

Related pairing problems for cyclic groups of odd prime order and for finite
fields of odd characteristic were studied by Preissmann and Mischler and by
Karasev and Petrov, respectively
~\cite{PreissmannMischler2009,KarasevPetrov2012}.  Within the binary setting,
Correia, Pokrovskiy, and Sudakov proved a broad near-perfect rainbow-matching
result that realizes arbitrary prescribed differences up to a sublinear
defect, but does not reach a perfect matching
~\cite{CorreiaPokrovskiySudakov2023}.  Other
earlier results use parameters different from the number of holes.
Balister, Gy\H{o}ri, and Schelp reported that the conjecture holds for
$s\leq5$; they proved it for a family in which one value occurs at least
$N/2$ times and each request value has even multiplicity
~\cite{BalisterGyoriSchelp2011}.  Golowich and Kim proved it whenever at most
$s$ distinct request values occur
~\cite{GolowichKim2020}.  Kov\'acs obtained a complementary multiplicity
theorem: for every $\varepsilon>0$, if $s$ is sufficiently large and one
value occurs in at least a $1/2+\varepsilon$ fraction of the requests, then a
perfect matching exists~\cite{Kovacs2026}.  The all-crossing hyperplane family
was obtained constructively by Yohananov and Yaakobi
~\cite{YohananovYaakobi2022} and also follows by adapting the Hall-based
argument of Hollmann et al.~\cite{HollmannEtAl2023}.  Erde gave a direct
hypercube proof for the coordinate-direction subcase, in which every request
is a standard basis vector~\cite{Erde2026}.  These hypotheses constrain the
dimension, the set of distinct request values, or the multiplicity profile of
the request multiset.

There is also a precise asymptotic intersection with the random Hall--Paige
theory of M\"uyesser and Pokrovskiy~\cite{MuyesserPokrovskiy2025}.  Put
$K:=H$.  Fix $h$ and choose $h/2$ pairwise vertex-disjoint internal edges in
each affine half; call such a choice a collision-free boundary.  After
translating the right half back to $K$, let $A,B\subseteq K$ be its two
endpoint sets, so $|A|=|B|=h$.
If every residual crossing difference occurs once, write $Z$ for their set.
Then $X=K\setminus A$, $Y=K\setminus B$, and $Z$ have the same size, and
admissibility gives
\[
  \sum X+\sum Y=\sum Z.
\]
To apply~\cite[Theorem~1.1]{MuyesserPokrovskiy2025}, take $p=1$ and the
three reference sets $R^1=R^2=R^3=K$.  The sets $X,Y,Z$ have the same
cardinality, the displayed identity is the required balance condition, and
\[
 |X\mathbin\triangle K|+|Y\mathbin\triangle K|
   +|Z\mathbin\triangle K|=3h.
\]
The lower bound on $p$ is automatic, and for fixed $h$ the perturbation
hypothesis
\[
 3h\leq \frac{|K|}{(\log |K|)^{10^{30}}}
\]
holds for all sufficiently large $|K|$.  The theorem therefore yields a
bijection $\phi:X\to Y$ for which $x\mapsto x+\phi(x)$ is a bijection from
$X$ to $Z$, and hence completes every such boundary.
This covers the squarefree residual regime asymptotically, but not the
arbitrary repeated-direction profiles, explicit finite-rank thresholds, or
deleted-minor certificates required here.  Our parameter is the number of
requests lying in a fixed hyperplane, while the main new difficulty is to
control all multiplicity profiles at the fixed levels $h=4$ and $h=8$.

\section{Preliminaries}
\label{sec:prelim}

\subsection{The problem and the hyperplane hierarchy}

All vector spaces are over $\F_2$.  We retain the notation introduced above:
an admissible request instance in $V=\F_2^s$ is
\[
  \cD=\{v_1,\ldots,v_N\},
  \qquad N=2^{s-1},
  \qquad \sum_i v_i=0.
\]
A \emph{solution} is a partition
\[
  V=\bigsqcup_{i=1}^{N}\{x_i,y_i\}
  \quad\text{such that}\quad x_i+y_i=v_i
  \quad(i=1,\ldots,N).
\]

\begin{conjecture}[Balister--Gy\H{o}ri--Schelp]
Every admissible request instance in $\F_2^s$ has a solution.
\end{conjecture}

Fix a hyperplane $H\leq V$ and $\alpha\in V\setminus H$, so that
\[
  V=H\sqcup(\alpha+H).
\]
A request in $\alpha+H$ is \emph{crossing}; a request in $H$ is
\emph{internal}.  We also call an internal request an \emph{$H$-hole},
counting multiplicity.  An \emph{$h$-hole instance} has exactly $h$
internal requests.

For an admissible instance, applying the quotient map $V\to V/H\cong\F_2$
to the zero-sum condition shows that $N-h$ is even.  In the cases considered
here $N$ is even, so $h$ is even.  In every solution, precisely $h/2$
internal edges lie in each affine half: the $N-h$ crossing edges use $N-h$
vertices of each half, leaving $h$ vertices in each half for internal edges.
Thus the crossing core is a bijection between two copies of $H$ with $h$
vertices deleted from each.  For $x\in\F_2^s$, let $\wt(x)$ denote its
Hamming weight, namely the number of nonzero coordinates.  For the parity
hyperplane
\[
  H_{\rm even}=\{x\in\F_2^s:\wt(x)\equiv0\pmod2\},
\]
the holes are exactly the even-weight requests.

The next two theorems are cited background results; they are not proved in
this paper.  We restate the exact forms used below.  The zero-hole statement
is the binary case of Hall's theorem~\cite{Hall1952} (see also the
simplex-code formulation in~\cite{HollmannEtAl2023}), and the two-hole
statement is proved in~\cite{TwoHoleManuscript}.

\begin{theorem}[Binary Hall theorem (cited)]\label{thm:hall-stated}
Every admissible zero-hole instance has a solution.  Equivalently, if
$K\cong\F_2^r$ and $d_1,\ldots,d_{|K|}\in K$ satisfy
$\sum_i d_i=0$, then there is a permutation $\pi:K\to K$ whose multiset
of differences $\{x+\pi(x):x\in K\}$ is $\{d_1,\ldots,d_{|K|}\}$.
\end{theorem}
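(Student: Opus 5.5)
The plan is to prove the equivalence of the two formulations first, and then prove the permutation form by Hall's exchange argument, specialized to the group $K\cong\F_2^r$.

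\textbf{Equivalence.} Write every crossing request as $v_i=\alpha+d_i$ with $d_i\in H$. In a zero-hole instance every edge crosses. A crossing edge $\{x,\alpha+y\}$ with $x,y\in H$ has label $\alpha+(x+y)$. So a solution is exactly a bijection $\pi:H\to H$, $x\mapsto y$, whose difference multiset $\{x+\pi(x)\}$ equals $\{d_1,\ldots,d_N\}$. Since $N=2^{s-1}$ is even for $s\geq2$, we have $\sum_i v_i=N\alpha+\sum_i d_i=\sum_i d_i$. Hence admissibility is exactly $\sum_i d_i=0$, and the statement reduces to the permutation form with $K=H$ and $|K|=N$.

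\textbf{Permutation form, by Hall's exchange argument.} List $K=\{a_1,\ldots,a_n\}$. Call a sequence $(d_1,\ldots,d_n)$ with zero sum \emph{realizable} if there is a bijection $b:\{1,\ldots,n\}\to K$ with $a_i+b_i=d_i$ after a suitable relabeling of the $a_i$. The all-zero sequence is realizable by $b_i=a_i$. Any zero-sum target can be reached from it by a finite sequence of moves of the form
\[
(d_1,\ldots,d_{n-1},d_n)\ \longmapsto\ (d_1,\ldots,d_{n-2},\,d_{n-1}+e,\,d_n+e),
\]
with entries reindexed as needed. So the core is Hall's lemma: \emph{if a zero-sum sequence is realizable, so is the sequence obtained by adding the same $e$ to two of its entries.}

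To prove the lemma, start from a realization and try to change the difference at position $n-1$ to the new value $d_{n-1}+e$. This requires moving the partner of $a_{n-1}$ to a value already used at some position $j$. That breaks position $j$, which is then repaired by re-pairing $a_j$, and so on, producing a chain of swaps. At each step the unique ``defect'' is pushed along, and the zero-sum condition forces the last position $n$ to receive $d_n+e$ automatically once the chain closes.

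\textbf{Main obstacle: termination of the chain.} Hall shows that the sequence of group elements visited by the chain is pairwise distinct until it hits either the index $n$ or a repeated value. In the repeated-value case one restarts with the modified realization, and a potential function (the set of visited elements) strictly grows. Since $K$ is finite, the process must end at position $n$.

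In characteristic~$2$ the signs in Hall's argument disappear, because $-x=x$, which simplifies the bookkeeping. The distinctness claim still needs care, and I would check it directly by assuming a first repetition and deriving that two positions have the same partner, contradicting bijectivity.
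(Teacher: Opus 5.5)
Your reduction of the zero-hole statement to the permutation form is correct, and it is all the paper itself relies on. The paper does not prove this theorem; it cites Hall's 1952 result.

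Your attempt to reprove Hall's theorem has a genuine gap, exactly at the step you flag, and the chain you describe can in fact run forever. In $\F_2^r$ the defect is always $e$: the first move exchanges two $b$-values differing by $e$, the repair exchanges two $a$-values differing by $e$, and so on. These moves never change the two pairings of positions defined by $b_j=b_i+e$ and by $a_j=a_i+e$. So your walk just alternates between two fixed perfect matchings and goes around one even cycle of their union. Suppose no position on that cycle carries the value $d_n$. Then the walk returns to position $n-1$ with every difference restored, because each position on the cycle has had both its $a$ and its $b$ shifted by $e$. A restart retraces the same cycle, so no potential grows. The assertion that the visited elements are distinct ``until a repeated value'' is vacuous. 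Moreover, the first repetition is a return to the starting position, which contradicts no bijectivity.

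Here is a concrete instance. In $K=\F_2^3$ take distinct nonzero $c,t$ and put $S=\Span\{c,t\}$. Realize $\{t,t,t,t,0,0,0,0\}$ by $\pi(x)=x+t$ on $S$ and $\pi(x)=x$ off $S$, and add $e=c$ to one $t$ and one $0$. Since $\pi(x+c)=\pi(x)+c$, the first swap from the position of $x$ lands on the position of $x+c$, which carries the same difference. The repair there needs the $a$-value $x$, which sends the defect straight back. The chain therefore bounces between these two positions forever, starting from either modified entry, and never meets a position with the other modified value. Yet the target $\{t,t,t,t+c,0,0,0,c\}$ is realizable: take the $3$-cycle $0\mapsto t\mapsto c\mapsto 0$, the transposition of $w$ and $w+t$ for some $w\notin S$, and three fixed points.

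What is missing is Hall's actual exchange argument, which must choose its moves more carefully than greedy defect-pushing. Without it, or without simply citing Hall as the paper does, your proposal establishes only the equivalence of the two formulations.
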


\begin{theorem}[Two-hole theorem (cited)]\label{thm:two-hole-stated}
Every admissible two-hole instance has a solution, with no multiplicity
assumption on either the internal or the crossing requests.
\end{theorem}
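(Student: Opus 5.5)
Since this is the cited result of~\cite{TwoHoleManuscript}, the following is only a sketch of how one could reduce it to Theorem~\ref{thm:hall-stated}. Write the two internal requests as $u_1,u_2\in H$. Translate the right half back by $\alpha$ and put $K=H\cong\F_2^{s-1}$. The crossing requests become $d_i=v_i+\alpha\in K$ for $i=1,\ldots,N-2$. Since $N-2$ is even and $\cD$ is zero-sum, we get $\sum_i d_i=u_1+u_2$. A solution then amounts to three choices:
\begin{itemize}
\item an edge $\{a,a+u_1\}$ in $H$;
\item an edge $\{b,b+u_2\}$ in $K$ (the translated right half);
\item a bijection $K\setminus\{a,a+u_1\}\to K\setminus\{b,b+u_2\}$ whose differences are exactly the $d_i$.
\end{itemize}
The plan is to obtain all three from a single full permutation of $K$ in which two extra edges are \emph{pinned}.

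\textbf{Reduction.} Choose $e_1=0$ and $e_2=u_1+u_2$. The multiset $\{d_1,\ldots,d_{N-2},0,u_1+u_2\}$ has zero sum, so Theorem~\ref{thm:hall-stated} gives a permutation $\pi$ of $K$ realizing it. Suppose there is $x\in K$ with
\[
  \pi(x)=x\quad\text{and}\quad \pi(x+u_1)=x+u_2 .
\]
Then deleting these two edges leaves a bijection from $K\setminus\{x,x+u_1\}$ to $K\setminus\{x,x+u_2\}$ whose differences are exactly the $d_i$. Taking $a=b=x$ gives the internal edges $\{x,x+u_1\}$ and $\{x,x+u_2\}$, which finishes the proof.

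Two symmetries help. Conjugation by a translation, $\pi\mapsto \pi(\,\cdot\,+t)+t$, preserves the difference multiset, so the location $x$ is immaterial. We are also free to choose which element is $e_1$: any $e$ with $e_2=e+u_1+u_2$ works, after replacing the condition by $\pi(x)=x+e$. The degenerate case $u_1=u_2$ (so $e_2=e_1$) is easy, because then only the relative position of two edges with equal difference matters.

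\textbf{Main step.} What is needed is a strengthened Hall theorem: given a zero-sum multiset containing two designated entries $e_1,e_2$, a realizing permutation exists in which the $e_1$-edge starts at $x$ and the $e_2$-edge starts at $x+u_1$. I would prove this by rerunning Hall's inductive exchange argument. Start from a permutation realizing all $d_i$ except the last one together with the pinned pair, and insert the remaining $d$ by Hall's alternating-chain modification. At each step, check that the chain can be routed so that it never uses the two pinned vertices. Hall's chain moves through vertices $x_j$ with differences generated additively, so a pinned vertex can be hit only if a specific linear relation holds. In that case I would switch to the other designated difference $e$, or re-translate, to escape. The freedom in choosing $e$ gives $|K|$ options, while the bad relations should exclude only $O(1)$ of them once $s\geq3$; small $s$ can be checked directly.

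\textbf{Main obstacle.} I expect the difficulty to be exactly this pinning: showing that Hall's exchange chain can always avoid two prescribed edges. Hall's argument controls the difference multiset but not the positions of edges. If the chain analysis becomes unmanageable, I would fall back on an averaging or algebraic argument over all choices of $e$ and $x$. Concretely, I would show that the number of permutations realizing the multiset with the pinned configuration is nonzero, for instance via a determinant or permanent coefficient with rows $x,x+u_1$ removed. This foreshadows the signed-determinant filter used later in the paper.
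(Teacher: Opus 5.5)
\textbf{There is a genuine gap.} Your pinned-Hall reduction is valid as a sufficient condition, but it does not reduce the problem. The pinned edges are exactly a two-hole boundary $A=\{a,a+u_1\}$, $B=\{b,b+u_2\}$ with $a=x$ and $b=x+e$. So the statement ``for some $e$ there is a realization with $\pi(x)=x+e$ and $\pi(x+u_1)=x+e+u_2$'' is a restatement of the theorem itself, and your $e$ is the relative placement $t=a+b$ of \eqref{eq:two-hole-map-transform}.

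Everything therefore rests on your Main step. That step is only heuristic, and the heuristics offered for it are wrong. Take $u_1=u_2=u$ and all $N-2$ crossing requests equal to $\alpha+w$ with $w\notin\Span\{u\}$; this instance is admissible because $N-2$ is even. A bijection $K\setminus A\to K\setminus B$ whose differences all equal $w$ must be $y\mapsto y+w$. This forces $B=A+w$, i.e.\ $e\in w+\Span\{u\}$. Three consequences follow:
\begin{enumerate}[label=\textup{(\roman*)}]
\item the pinned statement for your designated choice $e_1=0$ is false;
\item the case $u_1=u_2$ is not easy in your framework, since the relative position of the two edges is exactly what fails;
\item only $2$ of the $|K|$ values of $e$ succeed, which is the opposite of your claim that bad relations exclude only $O(1)$ of them.
\end{enumerate}
Nor can Hall's exchange argument be made to respect pinned positions. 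It controls only the difference multiset, not where edges sit. Nothing in it prevents a chain from passing through a pinned vertex, and its termination is a counting argument over the whole group $K$, whose closure is lost once two rows and two columns are frozen. This is precisely why the introduction says Hall's technique does not settle even two holes.

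\textbf{What is missing} is a global nonvanishing argument over all relative placements at once, rather than a search for one good $e$. The route sketched in Subsection~\ref{subsec:two-holes-on-map} works as follows. The signed coefficient $[z^{\mathbf m}]\det C(z)_{A_a^c,B_b^c}$ depends, up to sign, only on $t=a+b$. By \eqref{eq:two-hole-map-transform} it is the Walsh transform of $\eta\mapsto b_\eta(\mathbf m)$, restricted to characters with $\langle\eta,p\rangle=\langle\eta,q\rangle=1$. The translation phase cancels because $\sigma(\mathbf m)=p+q$.

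The substantive input is the two-deleted Walsh coefficient law, which is the case $\ell=1$ of Theorem~\ref{thm:affine-power-two-deletion}. It gives $b_\eta(\mathbf m)\neq0$ whenever $\langle\eta,\sigma(\mathbf m)\rangle=0$, by computing the exact $2$-adic valuation of a deleted Walsh permanent. Any $\eta$ with $\langle\eta,p\rangle=\langle\eta,q\rangle=1$ annihilates $p+q$. Hence the frequency-side function is not identically zero, and Walsh invertibility produces a $t$ with nonzero determinant coefficient.

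Your fallback, a determinant coefficient with rows $x,x+u_1$ deleted, points in this direction. But for a fixed $e$ that coefficient can vanish, as the example above shows. A proof needs both the transform over $e$ and the valuation law, and your proposal supplies neither.
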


\medskip
\noindent\textbf{Roadmap.}
Section~\ref{sec:prelim} states the basic matching problem, the hyperplane
hierarchy, and the cited zero- and two-hole results.  Section
~\ref{sec:framework} defines compatible boundaries, determinant certificates,
the Walsh transform, and the filtered-minor reduction.  Section
~\ref{sec:deleted-walsh-prep} proves the coefficient laws used in every later
case.  Sections~\ref{sec:four}, \ref{sec:eight-zero}, and
~\ref{sec:eight-nonzero} prove the four-hole, zero-sum eight-hole, and general
eight-hole statements, respectively.  Appendix~\ref{app:certificates}
records the finite certificates used in the last branch.  The exact
verification procedures, inputs, and recorded outputs are archived in the
Zenodo record~\cite{KreindelZabokritskiyVerification2026}.

\section{The character-minor framework}
\label{sec:framework}\label{sec:common-map}

\subsection{Boundary selection and determinant certificates}

The formal reduction is valid for any number $h$ of deleted vertices on each
side.  In a matching instance $h$ is even, because the boundary in each
affine half is the union of $h/2$ internal edges.  Four holes will be the
running case: two prescribed internal edges must be placed in each half,
producing four deleted rows and four deleted columns.

Put $r=s-1$, $K=H\cong\F_2^r$, and $N=|K|=2^r$.  Write each crossing request uniquely
as $\alpha+d_i$ with $d_i\in K$.  If there are $h$ internal requests, the
reduced crossing list has length $N-h$.

Suppose that the internal requests have been split into multisets $P_L$ and
$P_R$.  A pair $(A,B)$ of $h$-element subsets of $K$ is \emph{locally legal
for this split} if $A$ is the endpoint set of $h/2$ pairwise vertex-disjoint
edges in $K$ whose difference multiset is $P_L$, and $B$ is the endpoint set
of $h/2$ pairwise vertex-disjoint edges in the second copy of $K$ whose
difference multiset is $P_R$.  If the reduced crossing list is
$d_1,\ldots,d_{N-h}\in K$, a locally legal boundary $(A,B)$ is
\emph{crossing-compatible} with that list if there is a bijection
\[
  \pi:K\setminus A\longrightarrow K\setminus B
\]
such that
\[
  \{x+\pi(x):x\in K\setminus A\}
  =\{d_1,\ldots,d_{N-h}\}
\]
as multisets.  Thus crossing-compatibility is exactly the condition that the
two internal boundary matchings extend to a solution.  The following
elementary assembly step will also be used for eight holes.

\begin{lemma}[Boundary assembly]\label{lem:boundary-assembly}
Suppose the internal requests have been split into multisets $P_L$ and
$P_R$, each of size $h/2$.  Let $\mathcal E_L$ and $\mathcal E_R$ be
disjoint-edge realizations of these multisets in $K$, with endpoint sets $A$
and $B$, respectively; thus $|A|=|B|=h$.  If a bijection
\[
  \pi:K\setminus A\longrightarrow K\setminus B
\]
has reduced difference multiset
\[
  \{x+\pi(x):x\in K\setminus A\}
  =\{d_1,\ldots,d_{N-h}\},
\]
where both sides are multisets, then the edges in
$\mathcal E_L$, the translated edges
$\{\{\alpha+u,\alpha+v\}:\{u,v\}\in\mathcal E_R\}$, and the crossing edges
\[
  \{x,\alpha+\pi(x)\},\qquad x\in K\setminus A,
\]
form a solution for the original requests
$P_L$, $P_R$, and $\alpha+d_1,\ldots,\alpha+d_{N-h}$.
\end{lemma}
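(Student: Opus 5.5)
The plan is a direct verification: check that the listed edges partition $V$ and that their difference multiset is exactly the original request multiset. No earlier theorem is needed beyond the decomposition $V=K\sqcup(\alpha+K)$ with $K=H$.

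\emph{Partition.} I will handle each half separately.

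In the left half $K$, the edges of $\mathcal E_L$ are pairwise vertex-disjoint and cover exactly $A$. The crossing edges use the left endpoints $x\in K\setminus A$, each exactly once, since $\pi$ is defined on all of $K\setminus A$. So every vertex of $K$ is covered exactly once.

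In the right half $\alpha+K$, the translated edges $\{\alpha+u,\alpha+v\}$ with $\{u,v\}\in\mathcal E_R$ are pairwise disjoint, because translation by $\alpha$ is a bijection. They cover exactly $\alpha+B$. The crossing edges use the right endpoints $\alpha+\pi(x)$, and because $\pi$ is a bijection onto $K\setminus B$, these are exactly the vertices of $\alpha+(K\setminus B)$, each used once. So every vertex of $\alpha+K$ is covered exactly once.

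Since $K$ and $\alpha+K$ are disjoint and together make up $V$, the edges form a perfect matching of $V$.

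\emph{Differences.}
\begin{itemize}
\item The edges of $\mathcal E_L$ realize $P_L$ by hypothesis.
\item A translated edge has difference $(\alpha+u)+(\alpha+v)=u+v$ in characteristic $2$, so these edges realize $P_R$.
\item The crossing edge $\{x,\alpha+\pi(x)\}$ has difference $\alpha+(x+\pi(x))$. As $x$ ranges over $K\setminus A$, these run over $\alpha+d_1,\ldots,\alpha+d_{N-h}$ as a multiset, by the hypothesis on $\pi$.
\end{itemize}

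\emph{Count.} There are $h/2+h/2+(N-h)=N=2^{s-1}$ edges in total. Their difference multiset is $P_L\cup P_R\cup\{\alpha+d_i\}$, which is the full original request multiset. Hence the edges form a solution.

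The only point needing care is the multiset bookkeeping, namely that translation preserves edge labels and disjointness. I expect no real obstacle here.
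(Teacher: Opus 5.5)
Your proof is correct and follows essentially the same direct verification as the paper: the internal edges cover $A$ and $\alpha+B$, bijectivity of $\pi$ makes the crossing edges cover the complementary vertex sets, and translation by $\alpha$ preserves the internal differences while each crossing edge has difference $\alpha+x+\pi(x)$. Your version just spells out the half-by-half coverage and the multiset bookkeeping in more detail than the paper does.
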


\begin{proof}
The left internal edges cover $A$, the translated right internal edges cover
$\alpha+B$, and the bijectivity of $\pi$ makes the crossing edges cover the
two complementary vertex sets.  Thus the three edge families are disjoint
and cover $V$.  The displayed crossing edge has difference
$\alpha+x+\pi(x)$, while translation by $\alpha$ does not change the
differences of the right internal edges.
\end{proof}

For the reduced crossing list, let $m_x$ be the multiplicity of $x\in K$ and
call $\mathbf m=(m_x)_{x\in K}$ its \emph{reduced profile}.  Write
\[
 |\mathbf m|=\sum_{x\in K}m_x,
 \qquad \sigma(\mathbf m)=\sum_{x\in K}m_x\,x,
 \qquad z^{\mathbf m}=\prod_{x\in K}z_x^{m_x},
 \qquad \mathbf m!=\prod_{x\in K}m_x!.
\]
The sum in $\sigma(\mathbf m)$ is taken in $K$ (so the integer $m_x$ acts
modulo two).  If the $h$ internal
directions are $h_1,\ldots,h_h$, admissibility gives
\[
  \sigma(\mathbf m)=h_1+\cdots+h_h.
\]

Let
\[
 \mathcal R_{\mathrm{pol}}=\mathbb Z[z_d:d\in K]
\]
be the polynomial ring in commuting indeterminates indexed by the reduced
differences.  Define the group circulant
$C(z)\in\operatorname{Mat}_N(\mathcal R_{\mathrm{pol}})$,
with rows and columns indexed by $K$, by
\[
  C(z)=(z_{x+y})_{x,y\in K}.
\]
Fix an arbitrary total order on $K$ for matrix rows and columns.  It has no
mathematical content; changing it can only change signs of determinants.
The rows and columns are possible endpoints.  For $X\subseteq K$, write
$X^c=K\setminus X$; the notation $C(z)_{A^c,B^c}$ means the submatrix with
row-index set $A^c$ and column-index set $B^c$.  If $P$ is a polynomial,
$[z^{\mathbf m}]P$ denotes the coefficient of the monomial
$z^{\mathbf m}$.  For a square matrix $M=(M_{u,v})$ with equally large row
and column index sets, its permanent is
\[
  \per M=\sum_{\pi}\prod_u M_{u,\pi(u)},
\]
where the sum is over all bijections from the row-index set to the
column-index set.  For any matrix $M$ and row and column index sets $I,J$,
write $M[I,J]$ for the corresponding submatrix; the permanent of the empty
$0\times0$ matrix is $1$.  With these conventions, the coefficient
\[
 [z^{\mathbf m}]\per C(z)_{A^c,B^c}
\]
is exactly the number of bijections $K\setminus A\to K\setminus B$ with
profile $\mathbf m$.  Thus positivity of this permanent coefficient is
equivalent to the existence of such a bijection.  We shall use the stronger
signed certificate
\begin{equation}\label{eq:minor-certificate}
 [z^{\mathbf m}]\det C(z)_{A^c,B^c}\neq0
 \quad\Longrightarrow\quad
 \begin{gathered}
 \text{there exists a bijection }K\setminus A\longrightarrow K\setminus B\\
 \text{with difference profile }\mathbf m.
 \end{gathered}
\end{equation}
Indeed, the determinant coefficient is the signed sum of the same
bijections.  Nonvanishing is sufficient, although not necessary, because
even and odd bijections may cancel.  The dependence on both $A,B$ and the
target monomial is essential: a locally legal boundary need not be
crossing-compatible, and a nonzero minor polynomial need not contain the
coefficient $z^{\mathbf m}$ required by the given crossing requests.

Thus the matching problem has been reduced to one algebraic search: among
the locally legal boundary placements, find $A,B$ for which the prescribed
coefficient of the punctured determinant is nonzero.  The rest of the common
map moves this search from endpoint space to character space and then averages
only over legal edge placements.  The later case sections will supply the final
noncancellation argument.

\subsection{Fourier diagonalization and boundary filters}
\label{sec:shared-filter}

The matrix $C(z)$ is difficult in the endpoint basis because its determinant
mixes all bijections at once.  It also has a strong symmetry: its $(x,y)$
entry depends only on $x+y$.  Fourier analysis is the change of basis adapted
to exactly this translation symmetry.  In the physical analogy, the points
of $K$ are locations and the characters are frequencies.  Here a frequency
is not a sine wave but a parity pattern on the binary cube: it assigns $+1$
to one affine hyperplane and $-1$ to the other.

Let
\[
  \whK=\operatorname{Hom}_{\F_2}(K,\F_2)
\]
be the dual space of additive characters with values in $\F_2$.  For
$\xi\in\whK$ and $x\in K$, the bit $\xi(x)\in\F_2$ is also denoted by
$\langle\xi,x\rangle$.  After coordinates are chosen, the standard
nondegenerate pairing identifies $\whK$ with $K$, but the constructions use
only this evaluation pairing.  Define the associated sign character by
\[
  \eps_\xi(x)=(-1)^{\xi(x)},
  \qquad
  W=(\eps_\xi(x))_{\xi\in\whK,\,x\in K},
  \qquad
  L_\xi(z)=\sum_{x\in K}\eps_\xi(x)z_x.
\]
Thus $WW^{\mathsf T}=NI_N$, where $I_N$ is the $N\times N$ identity matrix.
For every $\rho\in\whK$, the Walsh column
$(\eps_\rho(y))_{y\in K}$ is an eigenvector of the group
circulant, because
\[
 C(z)(\eps_\rho(y))_{y\in K}
 =L_\rho(z)(\eps_\rho(x))_{x\in K}.
\]
Equivalently,
\begin{equation}\label{eq:walsh-diagonalization}
 C(z)=N^{-1}W^{\mathsf T}
 \operatorname{diag}(L_\rho(z))_{\rho\in\whK}W.
\end{equation}
For the full circulant this turns the determinant into the product of its
Fourier eigenvalues.  Deleting rows and columns breaks complete translation
symmetry, but the complementary-minor formula below retains a controlled
sum over the missing frequencies.  This is the precise link between Fourier
analysis and the punctured determinant.

For a subspace $U\leq\whK$, write
\[
  U^\perp=\{x\in K:\langle\xi,x\rangle=0
  \text{ for every }\xi\in U\}.
\]
For a vector $x\in K$, we also use the dual annihilator
\[
  x^\perp=\{\xi\in\whK:\langle\xi,x\rangle=0\}.
\]
If $X\subseteq\whK$ and $\xi\in\whK$, set
\[
  X+\xi=\{\rho+\xi:\rho\in X\}.
\]
The set of $\xi$ for which $X+\xi=X$ is called the
\emph{translation stabilizer} of $X$.  We write
$\Span\{u,v\}$ for linear span, whereas a mixed bracket
$\langle\xi,x\rangle$ denotes character evaluation.  The one exterior-space
pairing used below is identified explicitly when it appears.

For $Q\subseteq\whK$, put
\[
  b_Q(\mathbf m)
  =[z^{\mathbf m}]
    \prod_{\rho\in\whK\setminus Q}L_\rho(z).
\]
Thus $Q$ is the set of Fourier factors omitted from the full product, and
$b_Q(\mathbf m)$ records the crossing profile after those factors have been
deleted.  The notation $b_Q$ always remembers the whole set $Q$; only in the
two-hole specialization below can its translation class be encoded by one
relative character.
When $|\mathbf m|=N-|Q|$, choose any labeled list
$d_1,\ldots,d_{N-|Q|}$ with profile $\mathbf m$ and put
\[
  W_{\mathbf m}^{Q}
  =(\eps_\rho(d_j))_{\rho\in\whK\setminus Q,\,1\leq j\leq N-|Q|}.
\]
The coefficient--permanent correspondence proved in
\cite[Section~3.2]{TwoHoleManuscript} has the same counting proof after the
arbitrary row set $Q$ is deleted and gives
\begin{equation}\label{eq:coefficient-permanent}
  \mathbf m!\,b_Q(\mathbf m)=\per W_{\mathbf m}^{Q}.
\end{equation}
Indeed, expansion of the product assigns a value $x$ to every remaining
row; labeling the $m_x$ equal occurrences yields exactly
$\prod_{x\in K}m_x!=\mathbf m!$ corresponding permanent terms.

Translation of every deleted character by the same amount will be used
throughout the remaining structural sections.  Its only effect is the predictable phase
determined by the profile sum.

For two deleted characters, this covariance was proved in
\cite[Section~4, ``Character shift'']{TwoHoleManuscript}.  The same argument
gives the form below for an arbitrary deleted set $Q$.

\begin{lemma}[Character translation]\label{lem:character-translation}
Let $Q\subseteq\whK$ and let $\xi\in\whK$.  For every profile
$\mathbf m$ of size $N-|Q|$,
\[
 b_{Q+\xi}(\mathbf m)
 =\eps_\xi\bigl(\sigma(\mathbf m)\bigr)b_Q(\mathbf m).
\]
\end{lemma}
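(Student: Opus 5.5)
The proof is a direct substitution in the product defining $b_{Q+\xi}$, with no appeal to the permanent identity \eqref{eq:coefficient-permanent}. The key observation is that translating a character multiplies the corresponding linear form by a sign on each variable:
\[
 L_{\rho+\xi}(z)=\sum_{x\in K}\eps_\rho(x)\eps_\xi(x)z_x=L_\rho(\tilde z),
 \qquad \tilde z_x:=\eps_\xi(x)z_x,
\]
since $\eps_{\rho+\xi}=\eps_\rho\eps_\xi$ because $\langle\rho+\xi,x\rangle=\langle\rho,x\rangle+\langle\xi,x\rangle$ in $\F_2$.

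Next, the map $\rho\mapsto\rho+\xi$ is a bijection of $\whK$ sending $\whK\setminus Q$ onto $\whK\setminus(Q+\xi)$. Therefore
\[
 \prod_{\rho\in\whK\setminus(Q+\xi)}L_\rho(z)
 =\prod_{\rho\in\whK\setminus Q}L_{\rho+\xi}(z)
 =\prod_{\rho\in\whK\setminus Q}L_\rho(\tilde z).
\]
Under the substitution $z_x\mapsto\eps_\xi(x)z_x$, every monomial $z^{\mathbf m}$ is multiplied by $\prod_x\eps_\xi(x)^{m_x}$. This substitution is a ring automorphism of $\mathcal R_{\mathrm{pol}}$ that maps each monomial to a signed copy of itself. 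It therefore acts coefficientwise: the coefficient of $z^{\mathbf m}$ in $P(\tilde z)$ equals $\prod_x\eps_\xi(x)^{m_x}$ times the coefficient of $z^{\mathbf m}$ in $P(z)$.

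Finally, the sign is computed additively:
\[
\prod_x\eps_\xi(x)^{m_x}=(-1)^{\sum_x m_x\langle\xi,x\rangle}=(-1)^{\langle\xi,\sum_x m_x x\rangle}=\eps_\xi(\sigma(\mathbf m)).
\]
This uses linearity of $\xi$ and the fact that only $m_x \bmod 2$ matters, consistent with how $\sigma(\mathbf m)$ is computed in $K$. Combining the three steps gives $b_{Q+\xi}(\mathbf m)=\eps_\xi(\sigma(\mathbf m))\,b_Q(\mathbf m)$.

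No step is a real obstacle; the only care needed is the parity bookkeeping in this last step. The size hypothesis $|\mathbf m|=N-|Q|$ is not actually used: the identity holds for every profile, and the hypothesis only ensures that the coefficient can be nonzero.
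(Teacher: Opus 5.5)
Your proposal is correct and follows essentially the same route as the paper: translating the deleted set by $\xi$ amounts to the substitution $z_x\mapsto\eps_\xi(x)z_x$ in the product, and extracting the coefficient of $z^{\mathbf m}$ gives the sign $\prod_x\eps_\xi(x)^{m_x}=\eps_\xi(\sigma(\mathbf m))$. You simply spell out the bijection $\whK\setminus Q\to\whK\setminus(Q+\xi)$ and the parity bookkeeping that the paper leaves implicit, and your remark that the size hypothesis is not needed is also right.
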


\begin{proof}
Translating every character index in the product defining $b_Q$ by $\xi$
replaces each variable $z_x$ by $\eps_\xi(x)z_x$.  Taking the coefficient of
$z^{\mathbf m}$ therefore contributes the sign
\[
 \prod_{x\in K}\eps_\xi(x)^{m_x}
 =\eps_\xi\bigl(\sigma(\mathbf m)\bigr).
\]
\end{proof}

Let $\mathcal E_{\mathbb R}$ be the real vector space with basis
$\{e_\rho:\rho\in\whK\}$, and work in its real exterior algebra
$\bigwedge_{\mathbb R}\mathcal E_{\mathbb R}$.  Thus $e_\rho$ is the
standard unit vector in the coordinate indexed by the character $\rho$; for
example, $e_{01}$ is the basis vector labelled by $01$, not the character
$01$ itself.  The wedge product is bilinear
and alternating: $u\wedge u=0$ and $u\wedge v=-v\wedge u$ for degree-one
vectors.  More generally, homogeneous elements of degrees $i$ and $j$
commute with sign $(-1)^{ij}$; in particular, two-forms commute.

No further background on exterior algebras is needed here.  One may read
$u_1\wedge\cdots\wedge u_h$ as an alternating record of $h$ vectors: a
repeated vector makes it zero, and the coefficient of a basis wedge is the
corresponding determinant of coordinates.  These are precisely the two
features used below---they remove endpoint collisions automatically and
produce the corresponding Walsh minors.

Fix once and for all an arbitrary total order $\prec$ on $\whK$.  This order
serves only to orient determinants and exterior basis vectors: no character
is intrinsically smaller than another.  Write $\binom{\whK}{h}$ for the
family of $h$-element subsets of $\whK$.  If
$Q=\{\rho_1\prec\cdots\prec\rho_h\}\in\binom{\whK}{h}$, define
\[
  e_Q=e_{\rho_1}\wedge\cdots\wedge e_{\rho_h}.
\]
For $F=\sum_{Q\in\binom{\whK}{h}}f_Qe_Q$, its support is
$\operatorname{supp}(F)=\{Q\in\binom{\whK}{h}:f_Q\neq0\}$.

For $a\in K$, let
\[
 w_a=\sum_{\rho\in\whK}\eps_\rho(a)e_\rho\in\mathcal E_{\mathbb R}
\]
be the Walsh column indexed by $a$, regarded as an exterior-algebra vector.
For ordered $h$-tuples
$\mathbf a=(a_1,\ldots,a_h)$ and
$\mathbf b=(b_1,\ldots,b_h)$ of distinct points, write
\[
 A(\mathbf a)=\{a_1,\ldots,a_h\},
 \qquad
 B(\mathbf b)=\{b_1,\ldots,b_h\}.
\]
For
$Q=\{\rho_1\prec\cdots\prec\rho_h\}\in\binom{\whK}{h}$, set
\[
  \Delta_Q(\mathbf a)
  =\det(\eps_{\rho_i}(a_j))_{1\leq i,j\leq h}.
\]
Equivalently, $\Delta_Q(\mathbf a)$ is the coefficient of $e_Q$ in
$w_{a_1}\wedge\cdots\wedge w_{a_h}$.
These symbols occur only in the following transfer identity and in the edge
filters that use it.

The next identity is the arbitrary-$h$ complementary-minor expansion
established in~\cite[Section~4, ``Complementary character-minor
expansion'']{TwoHoleManuscript}; it is restated here in the notation needed
for the four- and eight-hole filters.

\begin{lemma}[General complementary-minor expansion]
\label{lem:general-compound}
For every pair of ordered $h$-tuples $\mathbf a,\mathbf b$ of distinct
elements of $K$, there is a sign $\tau(\mathbf a,\mathbf b)\in\{\pm1\}$
such that
\begin{equation}\label{eq:general-compound}
 \tau(\mathbf a,\mathbf b)
 \det C(z)_{A(\mathbf a)^c,B(\mathbf b)^c}
 =\frac1{N^h}\sum_{Q\in\binom{\whK}{h}}
   \Delta_Q(\mathbf a)\Delta_Q(\mathbf b)
   \prod_{\rho\in\whK\setminus Q}L_\rho(z).
\end{equation}
\end{lemma}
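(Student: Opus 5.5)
The plan is to derive \eqref{eq:general-compound} from the Walsh diagonalization \eqref{eq:walsh-diagonalization} together with Jacobi's complementary-minor identity. The determinant is then expanded by the Cauchy--Binet formula.

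\textbf{Step 1: invert the circulant.} Write
\[
  C=N^{-1}W^{\mathsf T}DW,\qquad D=\operatorname{diag}(L_\rho)_{\rho\in\whK}.
\]
Over the fraction field of $\mathcal R_{\mathrm{pol}}$ the matrix $C$ is invertible, since $\det C=\prod_\rho L_\rho\neq0$. Using $W^{-1}=N^{-1}W^{\mathsf T}$, we get
\[
  C^{-1}=N^{-1}W^{\mathsf T}D^{-1}W.
\]

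\textbf{Step 2: apply Jacobi.} Jacobi's identity gives
\[
  \det C_{A^c,B^c}=\pm\det C\cdot\det\bigl(C^{-1}\bigr)_{B,A}.
\]
The sign is $(-1)^{\sum(\text{positions of }A)+\sum(\text{positions of }B)}$ in the fixed order, and a further reordering sign arises from passing to the tuple orders of $\mathbf a$ and $\mathbf b$. All of these signs are absorbed into $\tau(\mathbf a,\mathbf b)$.

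Here $(C^{-1})_{B,A}$ denotes the $h\times h$ submatrix with rows indexed by the tuple $\mathbf b$ and columns indexed by $\mathbf a$. Its entries are
\[
  N^{-1}\sum_\rho \eps_\rho(b_i)L_\rho^{-1}\eps_\rho(a_j).
\]
The index transposition in Jacobi's formula is handled by the symmetry of $C$: since $C_{x,y}=z_{x+y}$, the matrix $C$ is symmetric, and hence so is $C^{-1}$.

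\textbf{Step 3: expand by Cauchy--Binet.} The matrix $(C^{-1})_{\mathbf b,\mathbf a}$ is a product of three factors:
\begin{itemize}
  \item the $h\times N$ matrix $(\eps_\rho(b_i))$,
  \item the diagonal matrix $D^{-1}$,
  \item the $N\times h$ matrix $(\eps_\rho(a_j))$,
\end{itemize}
together with the scalar factor $N^{-1}$. Cauchy--Binet therefore gives
\[
  \det(C^{-1})_{\mathbf b,\mathbf a}=N^{-h}\sum_{Q\in\binom{\whK}{h}}\Delta_Q(\mathbf b)\Bigl(\prod_{\rho\in Q}L_\rho^{-1}\Bigr)\Delta_Q(\mathbf a).
\]
Here $\Delta_Q$ takes the rows of $Q$ in the order $\prec$ and the columns in tuple order, and transposing a minor does not change it. Multiplying by $\det C=\prod_{\rho\in\whK}L_\rho$ cancels the inverses. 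The result is exactly the right-hand side of \eqref{eq:general-compound}, now a polynomial identity.

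\textbf{Main obstacle.} The only delicate point is bookkeeping of signs. Three sources contribute:
\begin{itemize}
  \item the Jacobi sign, which depends only on the positions of $A$ and $B$ in the fixed order on $K$;
  \item the reordering from the sets $A,B$ to the tuples $\mathbf a,\mathbf b$;
  \item the orientation of rows in $\Delta_Q$.
\end{itemize}
The third is common to both factors $\Delta_Q(\mathbf a)$ and $\Delta_Q(\mathbf b)$, so it squares away. The first two depend only on $(\mathbf a,\mathbf b)$ and not on $Q$ or $z$, so they combine into a single $\tau\in\{\pm1\}$.

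The identity holds in the fraction field, and both sides are polynomials, so it holds in $\mathcal R_{\mathrm{pol}}\otimes\mathbb Q$. The factor $N^{-h}$ is then harmless.
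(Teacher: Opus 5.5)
Your proof is correct, but it runs the two classical identities in the opposite order from the paper.

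The paper applies Cauchy--Binet directly to the $(N-h)\times(N-h)$ minor of $N^{-1}W^{\mathsf T}\operatorname{diag}(L_\rho)W$. This gives a sum over $(N-h)$-subsets $S=\whK\setminus Q$ of products of two large Walsh minors times $\prod_{\rho\in S}L_\rho$. It then applies Jacobi's identity to the constant matrix $W$, using $W^{-1}=N^{-1}W^{\mathsf T}$, to replace each large Walsh minor by the complementary $h\times h$ minor on $Q$. The identity $\det(W)^2=N^N$ produces the factor $N^{-h}$.

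You instead apply Jacobi to $C(z)$ itself. This forces you into the fraction field of $\mathcal R_{\mathrm{pol}}$ and requires $\det C=\prod_\rho L_\rho\neq0$. You then use Cauchy--Binet only on the small $h\times h$ minor of $C^{-1}=N^{-1}W^{\mathsf T}D^{-1}W$ and clear denominators at the end.

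The trade-offs are as follows.
\begin{itemize}
\item The paper's route never leaves the polynomial ring and needs no invertibility of $C$. However, it must check that the $Q$-dependent parts of the two Jacobi signs, one for each Walsh minor, cancel in their product.
\item In your route the single Jacobi sign visibly depends only on $A$ and $B$, and the orientation of $Q$ enters both Cauchy--Binet factors identically. Your sign bookkeeping is therefore more transparent.
\end{itemize}

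Your appeal to the symmetry of $C$ is harmless but unnecessary. The entry formula for $(C^{-1})_{b_i,a_j}$ follows directly from $C^{-1}=N^{-1}W^{\mathsf T}D^{-1}W$.
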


\begin{proof}
Apply Cauchy--Binet to the complementary minor in
\eqref{eq:walsh-diagonalization}.  Jacobi's identity replaces both
large Walsh minors by the complementary $h\times h$ determinants on $Q$.
Their signs have a product independent of $Q$, and
$\det(W)^2=N^N$ leaves the factor $N^{-h}$.
\end{proof}

After extracting the coefficient of $z^{\mathbf m}$, a summand has three
separate meanings:
\[
 \underbrace{\Delta_Q(\mathbf a)}_{\text{left boundary}}
 \quad
 \underbrace{\Delta_Q(\mathbf b)}_{\text{right boundary}}
 \quad
 \underbrace{b_Q(\mathbf m)}_{\text{crossing profile}}.
\]
Fourier analysis has therefore separated the two boundary placements from
the prescribed crossing data.  The remaining problem is to sum these terms
without averaging over illegal endpoint configurations.

The determinant identity still sums over arbitrary deleted endpoint sets.
We next force those endpoints to form edges with the prescribed internal
directions.  For $p\in K$ and
$\eta\in\whK$, define the Fourier edge filter
\[
    \Omega_{\eta,p}
    =\sum_{a\in K}\eps_\eta(a)
      w_a\wedge w_{a+p}.
\]
The wedge product makes every term with a repeated endpoint vanish.

The exterior-algebra formulation below packages the two-by-two Walsh-minor
formula from~\cite[Section~4, ``A two-by-two Walsh
minor'']{TwoHoleManuscript} for repeated use at higher deletion levels.

\begin{lemma}[Fourier edge filter]\label{lem:edge-filter}
Let $\eta\in\whK$ and $p\in K$.
\begin{enumerate}[label=\textup{(\roman*)}]
    \item If $\langle\eta,p\rangle=0$, then $\Omega_{\eta,p}=0$.
    \item If $\langle\eta,p\rangle=1$, then
    $\operatorname{supp}(\Omega_{\eta,p})$ consists exactly of the character
    pairs $\{\xi,\xi+\eta\}$.  For such a pair, its
    coefficient has absolute value $2N$ and, up to orientation, equals
    $-2N\eps_\xi(p)$.
\end{enumerate}
\end{lemma}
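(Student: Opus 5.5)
Expand $\Omega_{\eta,p}$ in the basis $\{e_\rho\wedge e_\sigma\}$ and compute each coefficient as a character sum over $a$. For a pair $\{\rho,\sigma\}$ with $\rho\prec\sigma$, the coefficient of $e_\rho\wedge e_\sigma$ in $w_a\wedge w_{a+p}$ is the $2\times2$ determinant
\[
 \eps_\rho(a)\eps_\sigma(a+p)-\eps_\sigma(a)\eps_\rho(a+p)
 =\eps_{\rho+\sigma}(a)\bigl(\eps_\sigma(p)-\eps_\rho(p)\bigr),
\]
because $\eps_\rho(a)\eps_\sigma(a)=\eps_{\rho+\sigma}(a)$ and characters are multiplicative. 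Multiplying by $\eps_\eta(a)$ and summing over $a\in K$, the coefficient of $e_\rho\wedge e_\sigma$ in $\Omega_{\eta,p}$ becomes
\[
 \bigl(\eps_\sigma(p)-\eps_\rho(p)\bigr)\sum_{a\in K}\eps_{\eta+\rho+\sigma}(a).
\]
The whole lemma then follows from orthogonality of characters: the inner sum equals $N$ if $\sigma=\rho+\eta$ and $0$ otherwise.

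The support therefore lies among pairs $\{\xi,\xi+\eta\}$, and such a pair requires $\eta\neq0$ to be a genuine 2-subset. For such a pair, write $\{\rho,\sigma\}=\{\xi,\xi+\eta\}$ in either orientation. Then
\[
 \eps_{\xi+\eta}(p)-\eps_\xi(p)=\eps_\xi(p)\bigl(\eps_\eta(p)-1\bigr).
\]

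\textbf{Case (i), $\langle\eta,p\rangle=0$.} Here $\eps_\eta(p)=1$, so every coefficient vanishes and $\Omega_{\eta,p}=0$. This also covers $\eta=0$, where the orthogonality sum would force $\rho=\sigma$ and no basis 2-form is hit.

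\textbf{Case (ii), $\langle\eta,p\rangle=1$.} Here $\eps_\eta(p)=-1$, so the coefficient is $-2N\eps_\xi(p)$ when the orientation is $\xi\prec\xi+\eta$. Its absolute value is $2N$, so every pair $\{\xi,\xi+\eta\}$ genuinely lies in the support. Swapping the roles of $\xi$ and $\xi+\eta$ only changes the sign: $\eps_{\xi+\eta}(p)=-\eps_\xi(p)$, which is consistent with antisymmetry and with the phrase ``up to orientation''.

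The only real care needed is bookkeeping of orientation, since the formula $-2N\eps_\xi(p)$ depends on which element of the pair is called $\xi$. I expect no genuine obstacle beyond stating this convention cleanly and checking that the degenerate case $\eta=0$ is subsumed in (i).
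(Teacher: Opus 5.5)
Your proposal is correct and follows essentially the same route as the paper. You compute the $2\times2$ coefficient $\eps_{\rho+\sigma}(a)\bigl(\eps_\sigma(p)-\eps_\rho(p)\bigr)$ and use character orthogonality to force $\sigma=\rho+\eta$, which gives $N\eps_\xi(p)(\eps_\eta(p)-1)$. Your added remarks on orientation and on the degenerate case $\eta=0$ are harmless refinements of the same argument.
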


\begin{proof}
For distinct $\xi,\psi$, the coefficient of
$e_\xi\wedge e_\psi$ in $w_a\wedge w_{a+p}$ is
\[
\eps_{\xi+\psi}(a)
\bigl(\eps_\psi(p)-\eps_\xi(p)\bigr).
\]
After multiplication by $\eps_\eta(a)$ and summation over $a$, character
orthogonality forces $\xi+\psi=\eta$.  Substituting
$\psi=\xi+\eta$ gives
$N\eps_\xi(p)(\eps_\eta(p)-1)$, proving both assertions.
\end{proof}

We call $\Omega_{\eta,p}$ \emph{legal} when
$\langle\eta,p\rangle=1$, and a wedge of edge filters legal when every
factor is legal.

Translation of the character labels acts diagonally on an edge filter.  This
simple covariance will later make the filter phase cancel the translation
phase of $b_Q(\mathbf m)$.

\begin{lemma}[Shift covariance]\label{lem:shift-covariance}
For $\zeta\in\whK$, let $S_\zeta(e_\rho)=e_{\rho+\zeta}$ and extend
$S_\zeta$ to the exterior algebra.  If $\langle\eta,p\rangle=1$, then
\[
 S_\zeta\Omega_{\eta,p}=\eps_\zeta(p)\Omega_{\eta,p}.
\]
More generally, if every factor below is legal, then
\[
 F=\bigwedge_{i=1}^j\Omega_{\eta_i,p_i}
 \quad\Longrightarrow\quad
 S_\zeta F
 =\eps_\zeta(p_1+\cdots+p_j)F.
\]
\end{lemma}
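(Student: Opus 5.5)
The plan is to compute $S_\zeta$ directly on the generators $w_a$ and then use that $S_\zeta$ is an algebra automorphism of the exterior algebra. First,
\[
 S_\zeta w_a=\sum_{\rho\in\whK}\eps_\rho(a)e_{\rho+\zeta}
 =\sum_{\rho'\in\whK}\eps_{\rho'+\zeta}(a)e_{\rho'}
 =\eps_\zeta(a)\,w_a,
\]
where we substitute $\rho'=\rho+\zeta$ and use multiplicativity $\eps_{\rho'+\zeta}(a)=\eps_{\rho'}(a)\eps_\zeta(a)$. Thus every Walsh column is an eigenvector of $S_\zeta$ with eigenvalue $\eps_\zeta(a)$. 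The map $S_\zeta$ is induced by a permutation of the basis $\{e_\rho\}$, so it is a linear automorphism of $\mathcal E_{\mathbb R}$. Its extension to $\bigwedge_{\mathbb R}\mathcal E_{\mathbb R}$ is therefore multiplicative: $S_\zeta(u\wedge v)=S_\zeta u\wedge S_\zeta v$.

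Applying this to the edge filter gives
\[
 S_\zeta\Omega_{\eta,p}
 =\sum_{a\in K}\eps_\eta(a)\eps_\zeta(a)\eps_\zeta(a+p)\,w_a\wedge w_{a+p}
 =\eps_\zeta(p)\,\Omega_{\eta,p},
\]
since $\eps_\zeta(a)\eps_\zeta(a+p)=\eps_\zeta(p)$ for every $a$. The phase is independent of $a$, so it factors out of the sum. Strictly, this computation does not even use the legality hypothesis $\langle\eta,p\rangle=1$. In the illegal case the identity holds trivially anyway, since $\Omega_{\eta,p}=0$ by Lemma~\ref{lem:edge-filter}(i).

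For the general statement, multiplicativity of $S_\zeta$ on wedges gives
\[
 S_\zeta F=\bigwedge_{i=1}^j S_\zeta\Omega_{\eta_i,p_i}
 =\Bigl(\prod_{i=1}^j\eps_\zeta(p_i)\Bigr)F
 =\eps_\zeta(p_1+\cdots+p_j)F.
\]
Two-forms commute, so the ordering of the factors introduces no signs.

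There is no real obstacle here. The only point needing care is to justify that $S_\zeta$, defined on basis vectors, extends to an algebra endomorphism of the exterior algebra; this is the functoriality of $\bigwedge$ applied to the linear map $S_\zeta$. It is also worth recording, as a consistency check with Lemma~\ref{lem:edge-filter}(ii), what $S_\zeta$ does to the support. It sends the pair $\{\xi,\xi+\eta\}$ to $\{\xi+\zeta,\xi+\zeta+\eta\}$, whose coefficient $-2N\eps_{\xi+\zeta}(p)=\eps_\zeta(p)\cdot(-2N\eps_\xi(p))$ matches the claimed phase up to the same orientation convention.
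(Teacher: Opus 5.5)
Your proof is correct, but it works on the opposite side of the Fourier dictionary from the paper's. The paper uses part (ii) of Lemma~\ref{lem:edge-filter}, which is where the hypothesis $\langle\eta,p\rangle=1$ enters, to write the legal filter in character coordinates as $\Omega_{\eta,p}=-N\sum_{\xi\in\whK}\eps_\xi(p)\,e_\xi\wedge e_{\xi+\eta}$. It then applies $S_\zeta$ and reindexes by $\rho=\xi+\zeta$.

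You instead stay with the endpoint definition of $\Omega_{\eta,p}$. You observe that each Walsh column is an eigenvector of the shift, $S_\zeta w_a=\eps_\zeta(a)w_a$. Functoriality of $\bigwedge$ then gives $S_\zeta(w_a\wedge w_{a+p})=\eps_\zeta(p)\,w_a\wedge w_{a+p}$ term by term.

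Your route is more elementary and self-contained: as you note, it needs neither the support computation nor legality. It also generalizes verbatim. Any endpoint wedge $w_{a_1}\wedge\cdots\wedge w_{a_h}$ is an $S_\zeta$-eigenvector with eigenvalue $\eps_\zeta(a_1+\cdots+a_h)$. This covers at once every linear combination of endpoint wedges with fixed edge directions, as in Proposition~\ref{prop:filtered-minor}.

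The paper's route instead states the covariance directly in the coordinates $e_\xi\wedge e_{\xi+\eta}$. Those are the coordinates in which the filter coefficients $f_Q,g_Q$ are later paired with $b_Q(\mathbf m)$. The cost is that it depends on the earlier lemma.

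One small remark: your sentence about two-forms commuting is unnecessary. The functorial extension of $S_\zeta$ preserves the order of the wedge factors, so no reordering occurs.
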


\begin{proof}
Lemma~\ref{lem:edge-filter} gives the representative-free identity
\[
 \Omega_{\eta,p}
 =-N\sum_{\xi\in\whK}
   \eps_\xi(p)e_\xi\wedge e_{\xi+\eta}.
\]
Apply $S_\zeta$ and reindex by $\rho=\xi+\zeta$ to obtain
\[
 S_\zeta\Omega_{\eta,p}
 =-N\sum_{\rho\in\whK}
   \eps_{\rho+\zeta}(p)e_\rho\wedge e_{\rho+\eta}
 =\eps_\zeta(p)\Omega_{\eta,p}.
\]
The general statement follows by applying this identity to every wedge
factor and multiplying the resulting Walsh signs.
\end{proof}

An edge filter is a search device, not a claim that every encoded placement
works.  Wedge products discard placements with repeated endpoints, and the
remaining terms form a weighted sum over all locally legal placements.  For
two degree-$h$ filters
\[
 F=\sum_{Q\in\binom{\whK}{h}}f_Qe_Q,
 \qquad
 G=\sum_{Q\in\binom{\whK}{h}}g_Qe_Q,
\]
define the filtered boundary scalar
\begin{equation}\label{eq:Phi-definition}
 \Phi_{\mathbf m}(F,G)
 =\sum_{Q\in\binom{\whK}{h}}f_Qg_Qb_Q(\mathbf m).
\end{equation}
This is the central quantity in every later case section.  A nonzero value cannot
identify the successful placement by itself.  Its intended role is to certify
that at least one locally legal placement has survived; the next section
proves that implication.

\subsection{The filtered-minor reduction}

The preceding section supplied the algebraic dictionary: complementary
minors of $C(z)$ encode residual crossing bijections, Fourier
diagonalization separates the two deleted boundaries from the crossing
profile, and edge filters range only over locally legal internal-edge
placements.  It has not yet shown that the filtered scalar
$\Phi_{\mathbf m}(F,G)$ from \eqref{eq:Phi-definition} really detects a
successful boundary.

This section proves that common detection bridge.  We first
derive the exact bridge from $\Phi_{\mathbf m}(F,G)$ back to the prescribed
coefficient of a punctured matrix $C(z)_{A^c,B^c}$.  We then place the
two-hole proof on the same map, making clear what changes when four or eight
characters are deleted.  Finally, we establish the coefficient
nonvanishing laws that will be reused in the later case sections; those sections
will construct the filters that make $\Phi_{\mathbf m}(F,G)$ nonzero.

\label{subsec:proof-map}

We begin by separating what is already known from what remains to prove.
The preceding subsections established three ingredients.  First, for a fixed
locally legal boundary $(A,B)$, it is enough to prove the certificate
\[
 [z^{\mathbf m}]\det C(z)_{A^c,B^c}\neq0.
\]
Indeed, it gives the residual crossing bijection, and
Lemma~\ref{lem:boundary-assembly} then gives a solution.  Second,
Lemma~\ref{lem:general-compound} expresses each such punctured determinant
in the Walsh basis.  Third, the edge filters and the scalar
$\Phi_{\mathbf m}(F,G)$ have been defined.  The new claim of this subsection
is that, when the filters encode locally legal placements,
\[
 \Phi_{\mathbf m}(F,G)\neq0
 \quad\Longrightarrow\quad
 [z^{\mathbf m}]\det C(z)_{A^c,B^c}\neq0
\]
for at least one of those placements.  Proposition~\ref{prop:filtered-minor}
will state and prove this implication formally.

Recall how the already established ingredients fit together.  Once the
internal edges occupy endpoint sets $A$ on the left and $B$ on the right,
deleting those rows and columns from $C(z)$ leaves the matrix
\[
 C(z)_{A^c,B^c}=(z_{x+y})_{x\in K\setminus A,\ y\in K\setminus B}.
\]
Its rows and columns are precisely the still-unmatched vertices, and its
entry $z_{x+y}$ records the reduced difference of the possible crossing edge
from $x$ to $y$.  Hence
$[z^{\mathbf m}]\det C(z)_{A^c,B^c}$ is the signed sum over bijections
$\pi:K\setminus A\to K\setminus B$ whose multiset of differences
$x+\pi(x)$ is the prescribed reduced profile $\mathbf m$.  Therefore
$[z^{\mathbf m}]\det C(z)_{A^c,B^c}\neq0$ proves that one such bijection
exists, and
Lemma~\ref{lem:boundary-assembly} then combines it with the internal edges to
give a solution.  This fixed-boundary implication was already established in
Section~\ref{sec:framework}.

What remains is to find a locally legal pair $A,B$ for which this coefficient
is nonzero.  Let $\mathbf a=(a_1,\ldots,a_h)$ list the endpoints used by the
left internal edges, in their paired order, and let $\mathbf b$ do the same on
the right.  Put $A=A(\mathbf a)$ and $B=B(\mathbf b)$.
The complementary-minor identity \eqref{eq:general-compound} was proved in
Section~\ref{sec:framework}; extracting its coefficient of $z^{\mathbf m}$ gives
\[
 \tau(\mathbf a,\mathbf b)
 [z^{\mathbf m}]\det C(z)_{A^c,B^c}
 =\frac1{N^h}
   \sum_{Q\in\binom{\whK}{h}}
   \Delta_Q(\mathbf a)\Delta_Q(\mathbf b)b_Q(\mathbf m).
\]
Thus this is the already-established Fourier expansion for one fixed
boundary; it does not yet choose a boundary for which the left-hand side is
nonzero.

We now make the new move.  Instead of testing $\mathbf a$ and $\mathbf b$ one
pair at a time, the edge filters form weighted sums over all locally legal
endpoint lists.  For scalar
weights $\lambda_{\mathbf a},\mu_{\mathbf b}$, let
\[
 F=\sum_{\mathbf a}\lambda_{\mathbf a}
     w_{a_1}\wedge\cdots\wedge w_{a_h}
   =\sum_Q f_Qe_Q
 \quad\text{and}\quad
 G=\sum_{\mathbf b}\mu_{\mathbf b}
     w_{b_1}\wedge\cdots\wedge w_{b_h}
   =\sum_Q g_Qe_Q,
\]
then $f_Q=\sum_{\mathbf a}\lambda_{\mathbf a}\Delta_Q(\mathbf a)$ and
$g_Q=\sum_{\mathbf b}\mu_{\mathbf b}\Delta_Q(\mathbf b)$.  Substituting these
coefficient expansions into the preceding fixed-boundary identity and
summing over $\mathbf a,\mathbf b$ shows that
$N^{-h}\Phi_{\mathbf m}(F,G)$ is exactly a weighted sum of the prescribed
coefficients of the matrices $C(z)_{A^c,B^c}$, but only for locally legal
boundaries.  The proposition below records the full identity and proves the
new implication.  The complete map is
\[
\begin{aligned}
 \Phi_{\mathbf m}(F,G)\neq0
 &\ \Longrightarrow\
 \text{some locally legal }A,B\text{ satisfy }
 [z^{\mathbf m}]\det C(z)_{A^c,B^c}\neq0\\
 &\ \Longrightarrow\
 \text{there is a bijection }\pi:K\setminus A\longrightarrow K\setminus B\\[-2pt]
 &\hspace{42mm}\text{with difference profile }\mathbf m\\
 &\ \Longrightarrow\
 \text{a solution}.
\end{aligned}
\]

\begin{proposition}[General filtered boundary reduction]
\label{prop:filtered-minor}
Let $h$ be even, let $\mathbf m$ be a profile of size $N-h$, and prescribe
directions
\[
 p_1,\ldots,p_{h/2}\in K\setminus\{0\}
 \quad\text{on the left, and}\quad
 q_1,\ldots,q_{h/2}\in K\setminus\{0\}
 \quad\text{on the right}.
\]
Suppose $F$ is a linear combination of endpoint wedges
\[
 w_{a_1}\wedge w_{a_1+p_1}\wedge\cdots\wedge
 w_{a_{h/2}}\wedge w_{a_{h/2}+p_{h/2}},
\]
and $G$ is the analogous linear combination for
$q_1,\ldots,q_{h/2}$.  If
\[
    \Phi_{\mathbf m}(F,G)\neq0,
\]
then there are $h$-element endpoint sets $A,B\subseteq K$, each a union of
the corresponding $h/2$ disjoint prescribed edges, such that
\[
 [z^{\mathbf m}]\det C(z)_{A^c,B^c}\neq0.
\]
Consequently the two boundary matchings and a crossing bijection on the
remaining vertices assemble to a solution.
\end{proposition}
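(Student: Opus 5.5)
The plan is a linearity argument: $\Phi_{\mathbf m}(F,G)$ is a weighted sum of the fixed-boundary coefficients, so if it is nonzero, some term of that sum must be nonzero. Write
\[
F=\sum_{\mathbf a}\lambda_{\mathbf a}\,w_{a_1}\wedge\cdots\wedge w_{a_h},
\qquad
G=\sum_{\mathbf b}\mu_{\mathbf b}\,w_{b_1}\wedge\cdots\wedge w_{b_h},
\]
where $\mathbf a$ runs over ordered tuples of the form $(a_1,a_1+p_1,\ldots,a_{h/2},a_{h/2}+p_{h/2})$, and similarly for $\mathbf b$ with the $q_i$. Without loss of generality drop every tuple with a repeated entry. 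Such a wedge has two equal degree-one factors and vanishes, so $F$ and $G$ are unchanged. Every surviving $\mathbf a$ therefore has $h$ distinct entries. Consequently $A(\mathbf a)$ is an $h$-element set that is the union of $h/2$ pairwise disjoint edges with the prescribed directions, i.e.\ a locally legal boundary; the same holds for $B(\mathbf b)$.

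Next, expand coefficients. Since $\Delta_Q(\mathbf a)$ is the coefficient of $e_Q$ in $w_{a_1}\wedge\cdots\wedge w_{a_h}$, we have $f_Q=\sum_{\mathbf a}\lambda_{\mathbf a}\Delta_Q(\mathbf a)$ and $g_Q=\sum_{\mathbf b}\mu_{\mathbf b}\Delta_Q(\mathbf b)$. Substituting into \eqref{eq:Phi-definition} and exchanging the finite sums gives
\[
\Phi_{\mathbf m}(F,G)=\sum_{\mathbf a,\mathbf b}\lambda_{\mathbf a}\mu_{\mathbf b}\sum_{Q}\Delta_Q(\mathbf a)\Delta_Q(\mathbf b)\,b_Q(\mathbf m).
\]
For each pair $(\mathbf a,\mathbf b)$ of distinct-entry tuples, Lemma~\ref{lem:general-compound} applies. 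Extracting the coefficient of $z^{\mathbf m}$ from \eqref{eq:general-compound} shows that the inner sum equals $N^h\tau(\mathbf a,\mathbf b)\,[z^{\mathbf m}]\det C(z)_{A(\mathbf a)^c,B(\mathbf b)^c}$; here we use that $b_Q(\mathbf m)$ is exactly the coefficient of $z^{\mathbf m}$ in $\prod_{\rho\notin Q}L_\rho$. Hence
\[
\Phi_{\mathbf m}(F,G)=N^h\sum_{\mathbf a,\mathbf b}\lambda_{\mathbf a}\mu_{\mathbf b}\,\tau(\mathbf a,\mathbf b)\,[z^{\mathbf m}]\det C(z)_{A(\mathbf a)^c,B(\mathbf b)^c}.
\]

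If the left side is nonzero, at least one summand is nonzero. That summand supplies a legal pair $A=A(\mathbf a)$, $B=B(\mathbf b)$ with $[z^{\mathbf m}]\det C(z)_{A^c,B^c}\neq0$. By \eqref{eq:minor-certificate} there is then a bijection $K\setminus A\to K\setminus B$ with difference profile $\mathbf m$. Lemma~\ref{lem:boundary-assembly}, applied to the edge families read off from $\mathbf a$ and $\mathbf b$, assembles the solution; the directions $p_i$ and $q_i$ are the split $P_L$, $P_R$ of the internal requests.

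The only real obstacle is bookkeeping rather than a genuine difficulty. We must check that discarding tuples with repeated endpoints is legitimate, which is immediate from the alternating property. We must also check that the sign $\tau$ and the ordering conventions are applied consistently: $\Delta_Q$ is oriented by $\prec$ in both Lemma~\ref{lem:general-compound} and the definition of $e_Q$, so the two expansions match term by term. Since only nonvanishing is needed, the unknown signs $\tau(\mathbf a,\mathbf b)$ and the values of the weights $\lambda,\mu$ are irrelevant.
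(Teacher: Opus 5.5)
Your proposal is correct and follows essentially the same route as the paper. You expand $F$ and $G$ into distinct-entry endpoint wedges, substitute $f_Q,g_Q$ into $\Phi_{\mathbf m}(F,G)$, and take the $z^{\mathbf m}$-coefficient of the complementary-minor expansion to write $N^{-h}\Phi_{\mathbf m}(F,G)$ as a weighted sum of fixed-boundary coefficients, then conclude via the determinant certificate and boundary assembly; your explicit justification for discarding tuples with repeated endpoints (those wedges vanish by the alternating property) merely spells out what the paper compresses into ``choose expansions whose ordered tuples have distinct entries.''
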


\begin{proof}
Choose endpoint-wedge expansions
\[
 F=\sum_{\mathbf a}\lambda_{\mathbf a}
      w_{a_1}\wedge\cdots\wedge w_{a_h},
 \qquad
 G=\sum_{\mathbf b}\mu_{\mathbf b}
      w_{b_1}\wedge\cdots\wedge w_{b_h},
\]
where the ordered tuples have distinct entries and encode the prescribed
internal edges.  The coefficient of $e_Q$ in the wedge indexed by
$\mathbf a$ is $\Delta_Q(\mathbf a)$.  Taking the coefficient of
$z^{\mathbf m}$ in Lemma~\ref{lem:general-compound} therefore gives the exact
bridge
\[
 \frac1{N^h}\Phi_{\mathbf m}(F,G)
 =\sum_{\mathbf a,\mathbf b}
   \lambda_{\mathbf a}\mu_{\mathbf b}\tau(\mathbf a,\mathbf b)
   [z^{\mathbf m}]\det C(z)_{A(\mathbf a)^c,B(\mathbf b)^c}.
\]
If the left-hand side is nonzero, at least one complementary-minor
coefficient on the right is nonzero, and its two endpoint sets are locally
legal by construction.  The determinant certificate gives the residual
bijection, and Lemma~\ref{lem:boundary-assembly} completes the matching.
\end{proof}

\subsection{Two holes on the same map}
\label{subsec:two-holes-on-map}

Before turning to four deleted vertices, it is useful to see how the
two-hole argument fits the preceding framework.  This also isolates exactly
what ceases to be automatic in the higher cases.  The notation of
\cite{TwoHoleManuscript} used $H$ for the reduced binary group and
$n=|H|$ for its order; these are the present $K$ and $N=|K|$.

Let $p,q\in K\setminus\{0\}$ be the two internal directions.  For
basepoints $a,b\in K$, the deleted endpoint sets are
\[
 A_a=\{a,a+p\},\qquad B_b=\{b,b+q\},
\]
and admissibility gives
\[
 |\mathbf m|=N-2,
 \qquad
 \sigma(\mathbf m)=p+q.
\]
The character side is particularly simple.  Every
$Q\in\binom{\whK}{2}$ has a unique nonzero difference $\eta$ and may be
written
\[
 Q=\{\xi,\xi+\eta\}.
\]
Following the earlier notation, put
\[
 b_\eta(\mathbf m):=b_{\{0,\eta\}}(\mathbf m).
\]
Thus $b_\eta$ is not a coefficient with one character deleted.  It is the
coefficient attached to the normalized two-character set $\{0,\eta\}$.
The character-translation identity gives
\begin{equation}\label{eq:two-hole-character-shift}
 b_{\{\xi,\xi+\eta\}}(\mathbf m)
 =(-1)^{\langle\xi,\sigma(\mathbf m)\rangle}b_\eta(\mathbf m).
\end{equation}

The two small Walsh minors impose the boundary conditions.  The minor on
$\{a,a+p\}$ vanishes unless $\langle\eta,p\rangle=1$, and the minor on
$\{b,b+q\}$ vanishes unless $\langle\eta,q\rangle=1$.  When both survive,
their product contains the phase
\[
 (-1)^{\langle\xi,p+q\rangle
          +\langle\eta,a+b\rangle}.
\]
The $\xi$-phase cancels the phase in
\eqref{eq:two-hole-character-shift}, because
$\sigma(\mathbf m)=p+q$.  Grouping the $N/2$ pairs with the same
difference $\eta$ therefore gives, up to the irrelevant ordering sign
$\tau_{a,b}\in\{\pm1\}$,
\begin{equation}\label{eq:two-hole-map-transform}
 \tau_{a,b}[z^{\mathbf m}]\det C(z)_{A_a^c,B_b^c}
 =\frac{2}{N}
 \sum_{\substack{\eta\in\whK\\
       \langle\eta,p\rangle=1\\
       \langle\eta,q\rangle=1}}
 (-1)^{\langle\eta,a+b\rangle}b_\eta(\mathbf m).
\end{equation}

This is an ordinary Walsh transform.  The frequency variable is $\eta$,
while
\[
 t=a+b
\]
is the relative placement of the two internal edges---the analogue of the
position or time variable in the usual Fourier picture, not a physical time.
The two-deleted Walsh coefficient lemma of
\cite[Section~4]{TwoHoleManuscript} shows that the frequency-side function in
\eqref{eq:two-hole-map-transform} is not identically zero.  Since the Walsh
transform is invertible, it is nonzero at some $t\in K$.  Choosing $a,b$
with $a+b=t$ gives the desired nonzero determinant coefficient.

This collapse is special to $|Q|=2$.  Up to translation, a two-element set
is classified by one difference $\eta$, and the translate contributes only
a phase that cancels.  A four- or eight-element set has genuine internal
affine geometry that cannot be encoded by one character.  Fourier
invertibility still supplies the common language, but no longer prevents
cancellation by itself.  The role of the higher edge filters is to isolate
families of $Q$ whose total contribution can be controlled.

\section{Deleted Walsh coefficient laws}
\label{sec:deleted-walsh-prep}

In the proof map, Proposition~\ref{prop:filtered-minor} formalizes the
implication
\[
 \Phi_{\mathbf m}(F,G)\neq0
 \quad\Longrightarrow\quad
 [z^{\mathbf m}]\det C(z)_{A^c,B^c}\neq0
\]
for some locally legal boundary; the later case sections must still establish its
hypothesis.  Once the filters restrict
$\Phi_{\mathbf m}(F,G)=\sum_Qf_Qg_Qb_Q(\mathbf m)$ to a controlled family of
deleted character sets, that task depends on knowing which coefficients
$b_Q(\mathbf m)$ survive and at what $2$-adic order.  The laws below supply
exactly this arithmetic input for the affine-support and
$\sigma(\mathbf m)\neq0$ cases used at four and eight holes.

There are two targets.  When the deleted characters form an affine
power-of-two set, we will prove nonvanishing exactly under the natural
annihilator condition.  For an arbitrary power-of-two set, we will prove
nonvanishing whenever its character sum pairs nontrivially with
$\sigma(\mathbf m)$.  The sign-permanent lemma below supplies the exact
$2$-adic scale; the deleted-row parity lemma makes every later
minimum-order minor sum even.

For a fixed deleted character set $Q$, the coefficient $b_Q(\mathbf m)$ is
encoded by the permanent of an $(N-|Q|)\times(N-|Q|)$ deleted Walsh matrix.
Ordinary parity is too coarse because these permanents contain a large forced
power of two.  We begin with two auxiliary tools: an exact valuation for
sign permanents of power-of-two-minus-one order, and a parity lemma tailored
to deleted Walsh matrices.  They isolate the first term that can remain
nonzero; exterior algebra appears only inside the second proof.

For a nonzero integer $a$, let $\vTwo(a)$ be its $2$-adic valuation, put
$\vTwo(0)=\infty$, and let $s_2(a)$ denote the number of ones in the binary
expansion of a nonnegative integer $a$.

The following sign-permanent estimate is classical; see Simion--Schmidt
\cite{SimionSchmidt1983} and the divisibility results of Kr\"auter--Seifter
\cite{KrauterSeifter1983}.  It was also used in the two-hole analysis
\cite[Section~4, ``Sign permanents of order $2^q-1$'']{TwoHoleManuscript}.
We reproduce its short congruence proof to keep the later valuation arguments
self-contained.

\begin{lemma}[Sign permanents of order $2^\ell-1$]\label{lem:sign-permanent}
Let $\ell\geq1$, let $n=2^\ell-1$, and let $A\in\{\pm1\}^{n\times n}$.  Then
\[
    \vTwo(\per A)=\vTwo(n!)=n-\ell.
\]
In particular, $\per A\neq0$.
\end{lemma}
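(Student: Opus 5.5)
The plan is to reduce $A$ to the all-ones matrix modulo a suitable power of two. Write $A=J-2B$, where $J$ is the $n\times n$ all-ones matrix and $B$ is the $0/1$ matrix marking the entries equal to $-1$. Expanding the permanent multilinearly in the rows gives
\[
 \per A=\sum_{S}(-2)^{|S|}\,\per M_S ,
\]
where $S$ ranges over the row subsets that take their row from $B$, and $M_S$ takes rows from $B$ on $S$ and from $J$ elsewhere. If the rows in $S$ use a column set $T$ with $|T|=|S|=k$, the $J$-rows fill the remaining columns in $(n-k)!$ ways. Hence
\[
 \per M_S=(n-k)!\sum_{|T|=k}\per B[S,T],
\]
and
\[
 \per A=\sum_{k=0}^n(-2)^k(n-k)!\,c_k,
 \qquad c_k=\sum_{|S|=|T|=k}\per B[S,T]\in\mathbb Z_{\geq0}.
\]
The $k=0$ term is $n!$, whose valuation is $n-s_2(n)=n-\ell$ by Legendre's formula, since $s_2(2^\ell-1)=\ell$.

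It therefore suffices to show that every term with $k\geq1$ has valuation strictly greater than $n-\ell$. The $k$-th term has valuation at least
\[
 k+\vTwo((n-k)!)=k+(n-k)-s_2(n-k)=n-s_2(n-k).
\]
Because $n=2^\ell-1$ is all ones in binary, subtracting $k\geq1$ produces no borrows, so $s_2(n-k)=\ell-s_2(k)\leq\ell-1$. Each such term thus has valuation at least $n-\ell+1$. Summing, $\per A\equiv n!\pmod{2^{n-\ell+1}}$, which gives $\vTwo(\per A)=n-\ell=\vTwo(n!)$ and in particular $\per A\neq0$.

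The only delicate point is the bookkeeping in the expansion: checking that the coefficient really factors as $(n-k)!$ times an integer, with no hidden denominators. This holds because the $J$-rows contribute the count of bijections onto the complementary columns, $(n-k)!$, exactly. The digit identity $s_2(n-k)+s_2(k)=\ell$ for $0\leq k\leq n=2^\ell-1$ is elementary, since $n-k$ is the bitwise complement of $k$ within $\ell$ bits. No earlier results of the paper are needed; the argument is purely arithmetic.
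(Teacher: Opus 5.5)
Your proof is correct and follows essentially the same route as the paper's: write $A=J_n-2B$, expand to get $\per A=\sum_k(-2)^k(n-k)!\,c_k$, and use Legendre's formula to show that every term with $k\geq1$ has valuation $n-s_2(n-k)\geq n-\ell+1$. This gives $\per A\equiv n!\pmod{2^{n-\ell+1}}$. Your explicit justification of $s_2(n-k)=\ell-s_2(k)$ via bitwise complementation is a small, welcome addition to the step the paper states without comment.
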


\begin{proof}
Let $J_n$ be the $n\times n$ all-ones matrix and write $A=J_n-2B$, with
$B$ a $0$--$1$ matrix.  Expanding according to the positions in which the
second term is used gives
\[
\per(J_n-2B)
=
\sum_{k=0}^{n}(-2)^k(n-k)!
\sum_{\substack{I,J'\subseteq\{1,\ldots,n\}\\|I|=|J'|=k}}
\per B[I,J'].
\]
For $k\geq1$, Legendre's formula gives
$\vTwo(2^k(n-k)!)=n-s_2(n-k)\geq n-\ell+1$, whereas the $k=0$ term satisfies
$\vTwo(n!)=n-s_2(n)=n-\ell$.  Hence
$\per A\equiv n!\pmod{2^{n-\ell+1}}$, proving the claim.
\end{proof}

Lemma~\ref{lem:sign-permanent} is self-contained and applies to every sign
matrix of order $2^\ell-1$.  For related $2$-adic questions concerning
Sylvester--Hadamard permanents, see Chabaud~\cite{Chabaud2018}.

The nonaffine deletion law below needs only one parity fact about the binary
matrix underlying a deleted Walsh matrix.  We record that fact directly,
rather than introduce separate notation for exterior coefficients and total
minor sums.  Exterior algebra over $\F_2$ appears only inside the proof as an
auxiliary encoding: signs disappear, while $a\wedge a=0$ still holds.

\begin{lemma}[Parity cancellation beyond the deleted set]
\label{lem:deleted-row-minor-parity}
Let $E$ be an $r$-dimensional vector space over $\F_2$, with $r\geq3$, let
$Q\subseteq E$, let $n\geq0$, and let $x_1,\ldots,x_n\in E^*$.  Form the
binary matrix
\[
 B=\bigl(x_j(\rho)\bigr)_{
       \substack{\rho\in E\setminus Q\\1\leq j\leq n}}.
\]
For every $k>|Q|$,
\[
 \sum_{\substack{I\subseteq E\setminus Q,\ J\subseteq\{1,\ldots,n\}\\
                  |I|=|J|=k}}
       \per B[I,J]
 \equiv0\pmod2.
\]
\end{lemma}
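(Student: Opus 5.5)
The plan is to encode the minor sum in the exterior algebra over $\F_2$, trade the deleted row set $E\setminus Q$ for the full space $E$ by inclusion--exclusion on $Q$, and then show that every full-space minor sum is even by a multiplicity count. Over $\F_2$ permanents and determinants agree, so it suffices to prove
\[
S_k(Q;x_1,\ldots,x_n):=\sum_{|I|=|J|=k}\det B[I,J]=0\quad\text{in }\F_2 .
\]

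\textbf{Step 1: exterior encoding.} Work in $\bigwedge_{\F_2}$ of the space with basis $\{e_\rho:\rho\in E\}$. Put $c_j=\sum_{\rho\in E\setminus Q}x_j(\rho)e_\rho$. The coefficient of $e_I$ in $\bigwedge_{j\in J}c_j$ is $\det B[I,J]$. Also $\prod_j(1+c_j)=\sum_J\bigwedge_{j\in J}c_j$, and this sum has no sign issues because $2$-forms commute and we are in characteristic $2$. Hence $S_k$ is the degree-$k$ part of the pairing
\[
\Bigl\langle \prod_j(1+c_j),\ \prod_{\rho\in E\setminus Q}(1+e_\rho^*)\Bigr\rangle .
\]

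\textbf{Step 2: remove $Q$.} Each factor satisfies $(1+e_\rho^*)^2=1$, so $\prod_{E\setminus Q}(1+e_\rho^*)=\prod_{E}(1+e_\rho^*)\cdot\prod_{\rho\in Q}(1+e_\rho^*)$. Expand the second product over subsets $Q'\subseteq Q$ and move $e_{Q'}^*$ across the pairing as an iterated contraction. Contraction by $e_\rho^*$ is a derivation sending $c_j$ to the scalar $x_j(\rho)$. Therefore $\iota_{e^*_{Q'}}\prod_j(1+c_j)$ is an $\F_2$-combination of products $\prod_{j\in J_0}(1+c_j)$ over subfamilies $J_0$ of the columns.

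Each resulting term is then paired with $\prod_E(1+e_\rho^*)$ in degree $k-|Q'|$, and $k-|Q'|\geq k-|Q|\geq1$. The columns $c_j$ still omit $Q$, but rows in $Q$ never meet the contraction $e_{Q'}^*$ only in the full-$E$ pairing. So, after replacing $c_j$ by the full column $\tilde c_j=\sum_{\rho\in E}x_j(\rho)e_\rho$ and tracking the same bookkeeping, it suffices to prove the following claim. For any functionals $y_1,\ldots,y_m\in E^*$ and any $k'\geq1$, the full-row minor sum $S_{k'}(\emptyset;y)$ vanishes mod $2$. The bookkeeping that makes this replacement clean, applying the contraction to the full-$E$ columns throughout, is the first technical point to check carefully.

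\textbf{Step 3: full-space sums vanish.} Fix a column set $J$ with $|J|=k'$. The rows of $B_E[\cdot,J]$ are the values $\rho\mapsto(y_j(\rho))_{j\in J}$ of a linear map $T:E\to\F_2^{J}$, so every vector in the image of $T$ occurs exactly $2^{\,r-\operatorname{rank}T}$ times.

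In $\sum_I\det B[I,J]$, a subset with a repeated or zero row contributes $0$. Each surviving set of distinct image rows is realized by $\prod(\text{multiplicities})$ choices of $I$. This count is even unless $T$ is injective. If $T$ is injective, a nonzero $k'$-minor needs $k'$ independent rows inside the $r$-dimensional image, so $\operatorname{rank}T=r=k'$. The sum then counts unordered bases of $\F_2^r$, namely $|GL_r(\F_2)|/r!$. This number is even for $r\geq3$: for $r=3$ it is $168/6=28$, and in general the $2$-adic valuation of $|GL_r(\F_2)|$ is $r(r-1)/2$, which exceeds $\vTwo(r!)\leq r-1$. This is exactly where the hypothesis $r\geq3$ enters; for $r=2$ one gets $|GL_2(\F_2)|/2=3$, which is odd. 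Summing over $J$ gives the claim.

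I expect the main obstacle to be Step 2. One must verify that the contraction reduction really lands only on full-space sums of positive degree, which is where $k>|Q|$ is used. One must also check that the contracted products keep the form "sum over subfamilies of wedges of full columns", so that Step 3 applies term by term.
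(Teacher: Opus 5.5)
Your argument is correct in substance but follows a different route from the paper. The one repair Step 2 needs is given in the second paragraph.

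\textbf{Comparison with the paper.} The paper stays in the $r$-dimensional algebra $\bigwedge_{\F_2}E$. It proves $\prod_{\rho\in E}(1+t\rho)=1$ by pairing $y$ with $y+u$ inside cosets of a three-dimensional subspace; this is where $r\geq3$ enters. It then deletes $Q$ in one line using $(1+tq)^2=1$, obtaining $\prod_{\rho\in E\setminus Q}(1+t\rho)=\prod_{q\in Q}(1+tq)$, which has $t$-degree at most $|Q|$. Finally a single Cauchy--Binet pairing against $\prod_j(1+tx_j)$ reads off the minor sum.

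You instead index rows by the $2^r$ coordinates of $\F_2^{E}$. There $\prod_{\rho\in E}(1+e_\rho^*)$ does not vanish, so you must move the deletion of $Q$ to the column side by contraction, which is a Laplace expansion along the rows in $Q$. You then prove the full-space vanishing by counting: fibers of $T$ have size $2^{r-\operatorname{rank}T}$, and in the only surviving case the count is $|GL_r(\F_2)|/r!$.

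Via Cauchy--Binet against coordinate functionals, your Step 3 is exactly an elementary, coefficient-by-coefficient proof of the paper's identity $\prod_{\rho\in E}(1+t\rho)=1$. It makes the role of $r\geq3$ explicit: the count equals $3$ at $r=2$, and indeed $\prod_{\rho\in\F_2^2}(1+t\rho)=1+t^2a\wedge b$. The paper's route gets a one-line deletion step. Yours gets a transparent combinatorial reason for the cancellation, at the cost of the contraction bookkeeping. You could drop Step 2 entirely by proving the identity in $\bigwedge E$ with your counting and then deleting $Q$ as the paper does.

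\textbf{The repair in Step 2.} As written, $c_j$ is truncated to $E\setminus Q$. Then every contraction $\iota_{e_q^*}$ with $q\in Q$ kills $c_j$, so only $Q'=\emptyset$ survives. What remains is the original sum rewritten as a full-row sum for a matrix with zero rows on $Q$, and Step 3 does not apply to that matrix.

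The fix is to use the full columns $\tilde c_j$ from the start of Step 1. This is legitimate because pairing against $\prod_{\rho\in E\setminus Q}(1+e_\rho^*)$ reads only coordinates outside $Q$. With this change:
\begin{itemize}
\item $\iota_{e_q^*}\tilde c_j=x_j(q)$, as you claim.
\item The contracted products are $\F_2$-combinations of $\prod_{i\in J'}(1+\tilde c_i)$ over subfamilies $J'$ of the columns.
\item Each $Q'\subseteq Q$ contributes full-row minor sums of degree $k-|Q'|\geq k-|Q|\geq1$, which Step 3 annihilates.
\end{itemize}
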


\begin{proof}
Work in $\bigwedge_{\F_2}E$ and let $t$ be a central indeterminate.  First,
\begin{equation}\label{eq:full-exterior-product}
 \prod_{\rho\in E}(1+t\rho)=1.
\end{equation}
Indeed, choose independent $u,v,w\in E$ and partition $E$ into cosets of
$L=\Span\{u,v,w\}$.  For one such coset, put
$Y=a+\Span\{v,w\}$ and pair $y\in Y$ with $y+u$.  Then
\[
 \prod_{\rho\in a+L}(1+t\rho)
 =(1+tu)^4\prod_{y\in Y}(1+t^2y\wedge u)
 =1+t^2\left(\sum_{y\in Y}y\right)\wedge u
 =1.
\]
Multiplying over the cosets proves \eqref{eq:full-exterior-product}.  Since
$(1+tq)^2=1$ for every $q\in E$, it follows that
\begin{equation}\label{eq:deleted-exterior-product}
 \prod_{\rho\in E\setminus Q}(1+t\rho)
 =\prod_{q\in Q}(1+tq).
\end{equation}
The coefficient of $t^k$ on the right is zero when $k>|Q|$.

Choose dual bases of $E$ and $E^*$ and write $B=YX$, where the rows of $Y$
are the coordinate vectors of $\rho\in E\setminus Q$ and the columns of $X$
are the coordinate vectors of $x_1,\ldots,x_n$.  For $k\geq0$, use the
canonical bilinear pairing
\[
 \langle\ ,\ \rangle_k:\bigwedge^k E\times\bigwedge^k E^*\longrightarrow\F_2,
 \qquad
 \left\langle \rho_1\wedge\cdots\wedge\rho_k,
 x_1\wedge\cdots\wedge x_k\right\rangle_k
 =\det\bigl(x_j(\rho_i)\bigr)_{i,j=1}^k,
\]
extended bilinearly.  Modulo two, permanent and determinant coincide.
Cauchy--Binet therefore identifies the minor sum in the statement with the
pairing of the degree-$k$ coefficients of
\[
 \prod_{\rho\in E\setminus Q}(1+t\rho)
 \quad\text{and}\quad
 \prod_{j=1}^{n}(1+tx_j).
\]
The first coefficient is zero for $k>|Q|$ by
~\eqref{eq:deleted-exterior-product}; hence the pairing and the displayed
minor sum vanish.
\end{proof}

The remaining common input is arithmetic.  The affine coefficients are most
naturally treated once at every power of two.  We first record the two facts
that make their block decomposition exact.

The full-matrix valuation in the next lemma is a binary specialization of the
group-determinant coefficient formula of Wang and Zhang
\cite[Theorem~2.1 and the proof of Theorem~1.3]{WangZhang2026}, which treats
finite abelian groups of prime-power order.  In the binary setting, the same
statement and Walsh-pairing proof appeared in
\cite[Section~3, ``Full character-matrix permanent
valuation'']{TwoHoleManuscript}.  We recall the proof here because its block
recursion is the form used to organize the deleted-coefficient arguments
below.

\begin{lemma}[Full Walsh permanent]\label{lem:full-walsh-permanent}
Let $E\cong\F_2^\ell$, write
$E^*=\operatorname{Hom}_{\F_2}(E,\F_2)$, let $h=|E|=2^\ell$, and let
$y_1,\ldots,y_h\in E^*$.  Define
\[
 P_E(y_1,\ldots,y_h)
 =\per\bigl((-1)^{\langle y_j,\tau\rangle}\bigr)_{
          \substack{\tau\in E\\1\leq j\leq h}},
\]
where $\langle y,\tau\rangle=y(\tau)$ is the evaluation pairing.  If
$y_1+\cdots+y_h\neq0$, then $P_E(y_1,\ldots,y_h)=0$.  If
$y_1+\cdots+y_h=0$, then
\[
 \vTwo\bigl(P_E(y_1,\ldots,y_h)\bigr)=h-1.
\]
\end{lemma}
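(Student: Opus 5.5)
The plan is to prove the vanishing statement by a row-translation symmetry, and the valuation statement by induction on $\ell$ via a block (Laplace) decomposition along a hyperplane of $E$.

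\emph{Vanishing.} Fix $\tau_0\in E$ and reindex the rows by $\tau\mapsto\tau+\tau_0$. This only permutes the rows, so the permanent is unchanged. On the other hand, each column $j$ is multiplied by $(-1)^{\langle y_j,\tau_0\rangle}$, and the permanent is multilinear in columns. Hence
\[
P_E(y_1,\ldots,y_h)=(-1)^{\langle y_1+\cdots+y_h,\tau_0\rangle}P_E(y_1,\ldots,y_h).
\]
If $\sum_j y_j\neq0$, choose $\tau_0$ pairing to $1$ with this sum; then $P_E=-P_E$, so $P_E=0$. This is the same phase mechanism as in Lemma~\ref{lem:character-translation}.

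\emph{Valuation: set-up of the induction.} The base case $\ell=0$ is $h=1$ with $y_1=0$, which gives $P_E=1$ and valuation $0$. For $\ell\geq1$, fix a hyperplane $E'\leq E$ and $\tau_0\notin E'$. The rows then split as $E'\sqcup(\tau_0+E')$. The row of $\tau_0+\tau$ equals the row of $\tau$ multiplied column-wise by $(-1)^{\langle y_j,\tau_0\rangle}$. Laplace expansion of the permanent along the $E'$-rows gives
\[
P_E=\sum_{|S|=h/2}(-1)^{\sum_{j\notin S}\langle y_j,\tau_0\rangle}\,
P_{E'}(y|_{E'})_S\,P_{E'}(y|_{E'})_{S^c},
\]
where $P_{E'}(\cdot)_S$ denotes the full Walsh permanent of $E'$ on the restricted columns indexed by $S$. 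By the vanishing part applied to $E'$, a term survives only if $\sum_{j\in S}y_j$ restricts to zero on $E'$, that is, only if $\sum_{S}y_j\in\{0,y^*\}$, where $y^*$ is the nonzero functional vanishing on $E'$. Because the total sum is zero, $S$ survives if and only if $S^c$ does. The signs of the $S$- and $S^c$-terms differ by $(-1)^{\langle\sum_j y_j,\tau_0\rangle}=1$, so the two terms are equal. By induction each surviving term has valuation $2(h/2-1)=h-2$.

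\emph{Valuation: the reduction.} Grouping the pairs $\{S,S^c\}$ gives $P_E=2\sum_{\{S,S^c\}}(\text{odd})\cdot2^{h-2}$. Therefore
\[
2^{-(h-1)}P_E\equiv\#\{\text{surviving unordered pairs }\{S,S^c\}\}\pmod2.
\]
The whole claim thus reduces to a counting statement: for a zero-sum list $y_1,\ldots,y_h$ in $E^*$, the number of $h/2$-subsets $S$ with $\sum_S y_j\in\{0,y^*\}$ is $\equiv2\pmod4$.

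\emph{Main obstacle.} This counting step is where I expect the difficulty. My first approach would be to express the count through characters of $E^*/\langle y^*\rangle$, that is, through Walsh sums over $\tau\in E'$ of the coefficient of $u^{h/2}$ in $\prod_j(1+(-1)^{\langle y_j,\tau\rangle}u)$, and then evaluate it modulo $4$ with Lucas/Kummer-type estimates on $\binom{h}{h/2}$-like coefficients. If this becomes unwieldy, I would instead use the freedom in choosing $E'$, averaging over all $h-1$ hyperplanes. Alternatively, I would strengthen the inductive hypothesis to record $2^{-(h-1)}P_E\bmod 4$, or switch to the $J-2B$ congruence of Lemma~\ref{lem:sign-permanent} adapted to the order $2^\ell$ with the zero-sum constraint.
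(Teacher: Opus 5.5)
\textbf{There is a genuine gap.} Your vanishing argument is correct, and so is your reduction. The Laplace expansion along the rows of a hyperplane $E'$, the vanishing criterion on $E'$, the fact that $S$ and $S^c$ survive together with identical signed terms, and the induction hypothesis together give
\[
 2^{-(h-1)}P_E(y_1,\ldots,y_h)\equiv
 \#\Bigl\{\{S,S^c\}:|S|=\tfrac h2,\ \sum_{j\in S}y_j|_{E'}=0\Bigr\}\pmod 2 .
\]
But the proof stops there. Given the induction, the parity of this number of zero-sum halvings is exactly equivalent to the valuation claim, so it is the whole remaining content of the lemma. You only list strategies for it and carry none of them out. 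The $J-2B$ route in particular does not transfer as stated: for $n=2^\ell$ with $\ell\geq2$, the $k=1$ term already has scalar valuation $1+\vTwo((n-1)!)=n-\ell<n-1$. You would therefore need integer control of the minor sums of $B$, not merely the mod-$2$ information of Lemma~\ref{lem:deleted-row-minor-parity}.

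\textbf{How to close it.} The missing statement is true. It is precisely the case $k=2$ of Lemma~\ref{lem:block-partition-parity}, applied in $(E')^*\cong\F_2^{\ell-1}$ with blocks of size $2^{\ell-1}=h/2$ to the zero-sum list $z_j=y_j|_{E'}$. The number of partitions into two zero-sum blocks of size $h/2$ is odd; equivalently, the number of zero-sum $h/2$-subsets containing a fixed label is odd. That lemma is proved independently, by a group-algebra computation in $\F_2[(E')^*]$ using Lucas' theorem. Citing it closes your hyperplane route.

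\textbf{Comparison with the paper.} The paper avoids this count by using the dual decomposition: it pairs rows along a line, $\{\tau,\tau+\eta\}$, instead of splitting them along a hyperplane. Each local $2\times2$ permanent vanishes unless its two columns agree on $\eta$; otherwise it equals $\pm2$ times a character of $\overline E=E/\Span\{\eta\}$. This yields
\[
P_E=2^{h/2}(-1)^{n_1/2}\sum_{\mathcal P}P_{\overline E}\bigl((\bar y_P)_{P\in\mathcal P}\bigr),
\]
summed over pairings of columns within the two $\eta$-parity classes. There are $(n_0-1)!!(n_1-1)!!$ such pairings, obviously an odd number, and every compressed list has zero sum, so induction gives $h/2+(h/2-1)=h-1$. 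Splitting along a hyperplane moves the difficulty into a nontrivial halving count; splitting along a line makes the corresponding count trivial.
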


\begin{proof}
If $y_1+\cdots+y_h\neq0$, translate the row index by an element of $E$ that
pairs nontrivially with this sum.  This permutes the rows and multiplies the
permanent by $-1$, so the permanent is zero.

Assume now that $y_1+\cdots+y_h=0$.  We prove the valuation by induction on
$\ell$.  For $\ell=0$, the matrix is $[1]$.  Suppose $\ell\geq1$, choose
$0\neq\eta\in E$, and put $\overline E=E/\Span\{\eta\}$.  For
$a\in\F_2$, let
\[
 n_a=\bigl|\{j:y_j(\eta)=a\}\bigr|.
\]
Let $\Pi_\eta$ be the set of pair partitions of the column labels in which
paired columns have the same value on $\eta$.  For
$\mathcal P\in\Pi_\eta$ and $P=\{i,j\}\in\mathcal P$, the functional
$\bar y_P=y_i+y_j$ annihilates $\eta$ and hence belongs to
$\overline E^*$.  Pairing the rows as $\{\tau,\tau+\eta\}$ gives
\begin{equation}\label{eq:full-walsh-recursion}
 P_E(y_1,\ldots,y_h)
 =2^{h/2}(-1)^{n_1/2}
  \sum_{\mathcal P\in\Pi_\eta}
  P_{\overline E}\bigl((\bar y_P)_{P\in\mathcal P}\bigr).
\end{equation}
Indeed, for one row pair and two assigned columns, the sum over the two
assignments is zero when their values on $\eta$ differ, while in the compatible
case it is
\[
 2(-1)^{y_i(\eta)}(-1)^{(y_i+y_j)(\tau)}.
\]
Multiplying these factors and summing over the assignments of column pairs to
row pairs yields \eqref{eq:full-walsh-recursion}.

The zero-sum hypothesis makes $n_1$ even, and $h$ even makes $n_0$ even.
Thus
\[
 |\Pi_\eta|=(n_0-1)!!(n_1-1)!!
\]
is odd, with $(-1)!!=1$.  Every compressed list
$(\bar y_P)_{P\in\mathcal P}$ has zero sum.  By induction, each permanent in
the sum in \eqref{eq:full-walsh-recursion} is $2^{h/2-1}$ times an odd integer.
The sum contains an odd number of odd normalized terms, so it remains odd.
Therefore
\[
 \vTwo(P_E)=\frac h2+\left(\frac h2-1\right)=h-1.
\]
\end{proof}

After transposition, for $h>2$ the nonvanishing criterion is also the
suppression law for repeated rows of a Sylvester matrix proved by
Crespi~\cite{Crespi2015}.  The induction above gives the exact valuation
directly in Walsh coordinates.  None of these full-matrix results determines
the affine or nonaffine deleted coefficients developed below or settles the
boundary-selection problem for the unrestricted four- and eight-hole
theorems.  For
related $2$-adic questions concerning Sylvester--Hadamard permanents, see
Chabaud~\cite{Chabaud2018}.

\begin{lemma}[Odd number of zero-sum block partitions]
\label{lem:block-partition-parity}
Let $E=\F_2^\ell$, put $h=2^\ell$, let $k\geq1$, and let
$v_1,\ldots,v_{kh}\in E$ be labeled elements with zero total sum.  The
number of partitions of the labels into $k$ unlabeled blocks of size $h$,
each having zero sum in $E$, is odd.
\end{lemma}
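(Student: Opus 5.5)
Induction on $k$ will do it, with the count taken modulo two. For $k=1$ there is exactly one partition, namely the whole label set, and it is a block because the total sum is zero. For $k\geq2$, fix the label $1$ and classify partitions by the block containing it. If a zero-sum $h$-subset $S\ni1$ is chosen as that block, the remaining $(k-1)h$ labels still have zero total sum, so by induction they can be completed in an odd number of ways. Modulo two, the count therefore equals the number of $h$-element subsets $S$ of the labels with $1\in S$ and $\sum_{i\in S}v_i=0$. Writing $M=kh$, it remains to show that this number is odd.

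We count it with characters. Let $T=\{2,\ldots,M\}$ and $u=v_1$; we need the number of $(h-1)$-subsets $S'\subseteq T$ with $\sum_{S'}v_i=u$. By Fourier inversion on $E$,
\[
 c=\frac1h\sum_{\chi\in\widehat E}(-1)^{\chi(u)}\,e_{h-1}\bigl((-1)^{\chi(v_i)}\bigr)_{i\in T},
\]
where $e_{h-1}$ is the elementary symmetric polynomial. Such an exact $2$-adic evaluation looks messy. A cleaner approach is to pass to the generating function in the group algebra $\F_2[E][y]$, namely $\prod_{i\in T}(1+y[v_i])$, and read off the coefficient of $y^{h-1}[u]$ modulo $2$.

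Over $\F_2$ we have $[v]^2=1$, which suggests writing $[v]=1+\delta_v$. The augmentation ideal $I=(\delta_v)$ of $\F_2[E]$ satisfies $I^{\ell+1}=0$, and $I^\ell$ is spanned by the norm element $\sum_{x\in E}[x]$. Then
\[
 \prod_{i\in T}(1+y+y\delta_{v_i})=\sum_j (1+y)^{M-1-j}y^j e_j(\delta).
\]
The quantity we want is the coefficient of $[u]$ in the $y^{h-1}$-part. Multiplying by $[u]$ (equivalently by $1+\delta_u$) and taking the coefficient of $[0]$ turns this into a statement about the full product over all labels. The full product equals $\prod_{i=1}^{M}(1+y+y\delta_{v_i})$ divided by the unit $1+y+y\delta_u$, and since $\sum_i v_i=0$ the top nilpotent terms are controlled. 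I expect the argument to come down to Lucas-type congruences such as $\binom{M-1-j}{h-1-j}$ modulo $2$ with $M=kh$. This bookkeeping in the nilpotent filtration is the step I expect to be the main obstacle, and the first thing I would try is to verify the case $\ell=1$ by hand to fix the pattern.

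If that becomes unwieldy, the fallback is induction on $\ell$, mirroring the recursion of Lemma~\ref{lem:full-walsh-permanent}. Choose $\eta\in E^*$ nonzero and first pair the labels within the classes $\eta(v)=0$ and $\eta(v)=1$; both classes have even size by the zero-sum condition and $M$ even, so the number of such pairings, $(n_0-1)!!(n_1-1)!!$, is odd. Then apply the statement in the quotient $E'=\ker$-compatible image of dimension $\ell-1$, with blocks of size $h/2$ built from the paired sums. The difficulty here is to match, modulo two, the partitions into zero-sum $h$-blocks with pairings followed by $h/2$-block partitions of the compressed sums. I would establish this with an involution argument that cancels, in pairs, the partitions not compatible with a pairing.
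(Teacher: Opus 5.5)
Your primary route is the paper's own: induct on $k$, reduce modulo two to the number of zero-sum $h$-subsets containing a fixed label, and read that number off $\prod_{i\in T}(1+y[v_i])$ in $\F_2[E][y]$, using $[v]=1+\delta_v$ and $I^{\ell+1}=0$. The gap is that you stop exactly at the step that carries the proof. The oddness of that count is only ``expected''. The detour you sketch (multiplying by $[u]$, passing to the full product over all $M$ labels, and dividing by $1+y+y\delta_u$) is not needed, and as stated it does not work. Multiplying by $[u]$ does not turn the product over $T$ into the full product, and $1+y[u]$ is not a unit in $\F_2[E][y]$.

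The missing computation is short. For $0\le j\le\ell$ one has $0\le h-1-j<2^\ell$, since $\ell\le 2^\ell-1$. Hence $kh-1-j=(k-1)2^\ell+(h-1-j)$, and Lucas' theorem gives $\binom{kh-1-j}{h-1-j}\equiv\binom{k-1}{0}=1\pmod 2$. For $j>\ell$ one has $e_j(\delta)\in I^{j}=0$. So the $y^{h-1}$-coefficient of your expansion is
\[
 \sum_{j\geq0}e_j(\delta)=\prod_{i\in T}(1+\delta_{v_i})=\prod_{i\in T}[v_i]
 =\Bigl[\sum_{i\in T}v_i\Bigr]=[u],
\]
by the zero total sum. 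Its $[u]$-coefficient is therefore $1$, so an odd number of $(h-1)$-subsets of $T$ sum to $u$. With this inserted, your argument coincides with the paper's.

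Your fallback is also unfinished, since the involution is never specified. It closes without one, however, and then gives a genuinely different proof, by induction on $\ell$ with $k$ fixed (the case $\ell=0$ is trivial). Double count pairs $(\mathcal P,\pi)$, where $\mathcal P$ is a zero-sum block partition and $\pi$ is a pairing inside the classes $\eta(v)=0$ and $\eta(v)=1$ each of whose pairs lies in a single block of $\mathcal P$.

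For fixed $\mathcal P$, each zero-sum block has even size $h$, so both $\eta$-classes meet it in an even number of labels. The number of such $\pi$ is then a product of odd double factorials, hence odd.

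For fixed $\pi$, the compatible $\mathcal P$ correspond bijectively to partitions of the $kh/2$ pair sums into $k$ zero-sum blocks of size $h/2$. These pair sums lie in $\ker\eta\cong\F_2^{\ell-1}$ and have zero total, so by induction their number is odd.

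The total number of pairings is $(n_0-1)!!(n_1-1)!!$, which is odd, so the number of partitions $\mathcal P$ is odd. This route avoids the group algebra and Lucas' theorem entirely, and it is precisely the pairing recursion of Lemma~\ref{lem:full-walsh-permanent}. The paper's route instead yields the stronger local statement that an odd number of zero-sum blocks pass through any given label.
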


\begin{proof}
We first show that, for the fixed labeled element $v_1$, the number of
zero-sum $h$-subsets containing $v_1$ is odd.  Write the remaining elements
as $w_1,\ldots,w_{kh-1}$, so $\sum_jw_j=v_1$.

Work in the group algebra $\mathcal G_E=\F_2[E]$.  For $v\in E$, write $X^v$ for the
corresponding basis element and put $n_v=X^v+1$.  If $\mathfrak J$ is the
augmentation ideal, then $n_v\in\mathfrak J$.  Choosing a basis
$e_1,\ldots,e_\ell$ of $E$ and
putting $Y_i=X^{e_i}+1$ gives
\[
 \mathcal G_E\cong
 \F_2[Y_1,\ldots,Y_\ell]/(Y_1^2,\ldots,Y_\ell^2),
 \qquad \mathfrak J^{\ell+1}=0.
\]
Consider
\[
 P(u)=\prod_{j=1}^{kh-1}(1+uX^{w_j})\in \mathcal G_E[u].
\]
Expanding according to the factors from which an $n_{w_j}$ is selected gives
\[
 [u^{h-1}]P(u)
 =\sum_{d=0}^{\ell}
   \binom{kh-1-d}{h-1-d}
   \sum_{\substack{A\subseteq[kh-1]\\|A|=d}}
   \prod_{j\in A}n_{w_j}.
\]
Since $h=2^\ell$, Lucas' theorem gives
\[
 \binom{kh-1-d}{h-1-d}\equiv1\pmod2
 \qquad(0\leq d\leq\ell).
\]
Therefore
\[
 [u^{h-1}]P(u)
 =\prod_{j=1}^{kh-1}(1+n_{w_j})
 =\prod_{j=1}^{kh-1}X^{w_j}
 =X^{v_1}.
\]
Thus an odd number of $(h-1)$-subsets of the $w_j$ have sum $v_1$, proving
the claim.

Now induct on $k$.  The case $k=1$ is immediate.  There are an odd number
of choices for the block containing $v_1$, and every complement again has
zero total sum.  By induction each complement has an odd number of admissible
partitions.  The total number is therefore odd.
\end{proof}

The endpoint cases of the next theorem are inherited specializations.  For
$\ell=0$, one has $U=\{0\}$ and $Q=\{\xi\}$; the annihilator condition is
vacuous, and \eqref{eq:coefficient-permanent} reduces the valuation formula
to Lemma~\ref{lem:sign-permanent} for a sign matrix of order $N-1$.  For
$\ell=1$, writing $U=\Span\{\eta\}$ gives
$Q=\{\xi,\xi+\eta\}$ and the condition
$\langle\eta,\sigma(\mathbf m)\rangle=0$; the conclusion is exactly the
two-deleted Walsh-coefficient law of
~\cite[Section~4]{TwoHoleManuscript}.  The new deletion levels used below are
$\ell=2$ and $\ell=3$; the proof treats every $\ell<r$ uniformly.

\begin{theorem}[Affine power-of-two deletion]
\label{thm:affine-power-two-deletion}
Let $K\cong\F_2^r$, $N=2^r$, and let $U\leq\whK$ have dimension
$\ell<r$ and order $h=2^\ell$.  For $\xi\in\whK$, put $Q=\xi+U$.
For a profile $\mathbf m$
of size $N-h$,
\[
 b_Q(\mathbf m)=0
 \quad\text{if}\quad
 \sigma(\mathbf m)\notin U^\perp.
\]
If $\sigma(\mathbf m)\in U^\perp$, then
\[
 \vTwo\bigl(b_Q(\mathbf m)\bigr)
 =\vTwo\!\left(\frac{(N-h)!}{\mathbf m!}\right),
\]
and in particular $b_Q(\mathbf m)\neq0$.
\end{theorem}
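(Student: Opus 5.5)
By Lemma~\ref{lem:character-translation}, $b_{\xi+U}(\mathbf m)=\pm b_U(\mathbf m)$, so both assertions depend only on $U$ and we may take $\xi=0$ and $Q=U$.

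\emph{Vanishing.} Suppose $\sigma(\mathbf m)\notin U^\perp$, and pick $u\in U$ with $\langle u,\sigma(\mathbf m)\rangle=1$. Then $U+u=U$, so Lemma~\ref{lem:character-translation} gives $b_U(\mathbf m)=\eps_u(\sigma(\mathbf m))\,b_U(\mathbf m)=-b_U(\mathbf m)$. Hence $b_U(\mathbf m)=0$.

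\emph{Valuation.} Assume $\sigma(\mathbf m)\in U^\perp$. By \eqref{eq:coefficient-permanent} it suffices to show
\[
 \vTwo\bigl(\per W^U_{\mathbf m}\bigr)=\vTwo\bigl((N-h)!\bigr).
\]
The rows of $W^U_{\mathbf m}$ are the characters in $\whK\setminus U$. These split into the $n:=N/h-1=2^{r-\ell}-1$ nonzero cosets $c+U$; fix one representative $c$ for each coset.

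We group the terms of the permanent by which set of columns is sent to each coset. This writes the permanent as a sum over ordered partitions of the column labels $\{1,\ldots,N-h\}$ into $n$ blocks of size $h$, one block assigned to each coset. For a coset $c+U$ with assigned block $S$, the entries factor as $\eps_{c+u}(d_j)=\eps_c(d_j)\eps_u(d_j)$. The block's contribution is therefore
\[
 \eps_c\Bigl(\sum_{j\in S}d_j\Bigr)\,P_U\bigl((\bar d_j)_{j\in S}\bigr),
\]
where $\bar d_j$ is $d_j$ regarded as a functional on $U$, i.e.\ an element of $U^*\cong K/U^\perp$.

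By Lemma~\ref{lem:full-walsh-permanent}, this factor is zero unless $\sum_{j\in S}d_j\in U^\perp$. When it is nonzero, it equals $2^{h-1}$ times an odd number. In that case the sign $\eps_c(\sum_S d_j)$ depends only on the coset $c+U$, because the block sum lies in $U^\perp$.

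Now fix an unordered partition into $n$ zero-sum blocks in $K/U^\perp$ and sum over the $n!$ bijections from blocks to cosets. This sum factors as
\[
 \prod_S P_U(S)\cdot\per\Bigl(\eps_c\bigl(\textstyle\sum_S d_j\bigr)\Bigr)_{c,S}.
\]
The second factor is the permanent of an $n\times n$ sign matrix with $n=2^{r-\ell}-1$. Lemma~\ref{lem:sign-permanent} gives it valuation exactly $n-(r-\ell)$. So every admissible partition contributes
\[
 2^{\,n(h-1)+n-(r-\ell)}\times\text{odd}.
\]

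Next we count the admissible partitions. The images $\bar d_j\in K/U^\perp\cong\F_2^\ell$ have total sum $\overline{\sigma(\mathbf m)}=0$. Lemma~\ref{lem:block-partition-parity}, applied with $k=n$ and block size $h=2^\ell$, says the number of partitions into zero-sum blocks is odd. A sum of an odd number of terms, each $2^e$ times an odd integer, has valuation exactly $e$. Hence
\[
 \vTwo\bigl(\per W^U_{\mathbf m}\bigr)=nh-(r-\ell).
\]

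Finally we compare with Legendre's formula. Since $N-h=hn$ and $h$ is a power of two, $s_2(N-h)=s_2(n)=r-\ell$. Therefore
\[
 \vTwo\bigl((N-h)!\bigr)=(N-h)-s_2(N-h)=nh-(r-\ell),
\]
which matches the computed valuation and gives $\vTwo\bigl(b_Q(\mathbf m)\bigr)=\vTwo\bigl((N-h)!/\mathbf m!\bigr)$, in particular $b_Q(\mathbf m)\neq0$.

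\emph{Main obstacle.} The delicate step is the block factorization. One must check that regrouping the permanent by row cosets produces exactly the product of a full Walsh permanent on $U$ and an $n\times n$ sign permanent over cosets. The key point is that the coset-representative sign depends only on the block sum, and this is well defined precisely because every surviving block sum lies in $U^\perp$. Once that is verified, the two earlier lemmas and the odd partition count combine without any cancellation.
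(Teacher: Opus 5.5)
Your proposal is correct and follows the paper's proof essentially step for step. You reduce to $Q=U$ by character translation, then expand the permanent over zero-sum column blocks assigned to the nontrivial $U$-cosets, and combine Lemma~\ref{lem:full-walsh-permanent}, the order-$(2^{r-\ell}-1)$ sign permanent of Lemma~\ref{lem:sign-permanent}, the odd partition count of Lemma~\ref{lem:block-partition-parity}, and Legendre's formula. The only difference is cosmetic: you obtain the vanishing from Lemma~\ref{lem:character-translation} applied to a stabilizing $u\in U$ with $\langle u,\sigma(\mathbf m)\rangle=1$, whereas the paper translates the rows of the permanent directly, which is the same mechanism.
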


\begin{proof}
Form the $(N-h)\times(N-h)$ repeated-column matrix
$W_{\mathbf m}^{Q}$.  If
$\sigma(\mathbf m)\notin U^\perp$, choose $\eta\in U$ with
$\langle\eta,\sigma(\mathbf m)\rangle=1$.  Translation of the row index
by $\eta$ preserves $\whK\setminus Q$ and multiplies the permanent by
$-1$.  Hence $\per W_{\mathbf m}^{Q}=0$.

Assume $\sigma(\mathbf m)\in U^\perp$.  Translating all character indices
changes the permanent only by an overall column sign, so it suffices to take
$Q=U$.  Put
\[
 M=\frac Nh=2^{r-\ell}.
\]
The remaining rows split into the $M-1$ nontrivial cosets $\gamma+U$.  For
$x\in K$, let
$\bar x\in U^*=\operatorname{Hom}_{\F_2}(U,\F_2)$ be the restriction of
$x$ to $U$.  The
hypothesis is equivalent to
\[
 \sum_xm_x\bar x=0\qquad\text{in }U^*.
\]

Partition the labeled columns into $M-1$ unlabeled blocks of size $h$.
Lemma~\ref{lem:full-walsh-permanent} shows that a row block $\gamma+U$
contributes zero unless the restricted column types in its assigned block
have zero sum.  For a surviving partition $\cP$ and a block $P\in\cP$ of
labeled column positions, put $X_P=\sum_{j\in P}d_j$.  Define the sign
matrix $A_{\cP}$ with rows indexed by the nontrivial cosets
$\gamma+U\in(\whK/U)\setminus\{U\}$, columns indexed by the blocks
$P\in\cP$, and entries
\[
 A_{\cP}
 =\bigl(\eps_\gamma(X_P)\bigr)_
 {\substack{\gamma+U\in(\whK/U)\setminus\{U\}\\P\in\cP}}.
\]
Every surviving block has restricted column sum zero, so
$X_P\in U^\perp$.  Consequently each entry depends only on the coset
$\gamma+U$, not on its representative, and $A_{\cP}$ is a well-defined sign
matrix of order $M-1=2^{r-\ell}-1$.

The same lemma gives
\[
 \per W_{\mathbf m}^{U}[\gamma+U,P]
 =2^{h-1}c_P\eps_\gamma(X_P),
\]
where $c_P$ is odd and independent of $\gamma$.  Summing over bijections
between the column blocks and row cosets yields
\[
 2^{(h-1)(M-1)}
 \left(\prod_{P\in\cP}c_P\right)\per A_{\cP}.
\]
Lemma~\ref{lem:sign-permanent} gives
\[
 \vTwo(\per A_{\cP})=(M-1)-(r-\ell).
\]
Every surviving partition therefore contributes with exact valuation
\[
 (h-1)(M-1)+(M-1)-(r-\ell)=N-h-r+\ell.
\]

Lemma~\ref{lem:block-partition-parity}, applied to the labeled restricted
column types in $U^*\cong\F_2^\ell$, says that the number of surviving
partitions is odd.  After division by $2^{N-h-r+\ell}$, each surviving
contribution is odd, so their sum is odd.  Hence
\[
 \vTwo\bigl(\per W_{\mathbf m}^{U}\bigr)=N-h-r+\ell.
\]
Finally, $s_2(N-h)=r-\ell$, and Legendre's formula gives
\[
 \vTwo((N-h)!)=N-h-r+\ell.
\]
Now use \eqref{eq:coefficient-permanent} with $Q=U$ and translate back from
$U$ to $\xi+U$.
\end{proof}

The second common law detects a deleted character set whose character sum
$\sum_{\rho\in Q}\rho$ pairs nontrivially with the crossing-profile sum.  It
contains the four- and
eight-character estimates as the cases $\ell=2$ and $\ell=3$.

\begin{theorem}[Power-of-two deletion with nontrivial character-sum pairing]
\label{thm:nonzero-sum-power-two-deletion}
Let $K\cong\F_2^r$, $N=2^r$, let $2\leq\ell<r$, and put
$h=2^\ell$.  Let $Q\subseteq\whK$ have $h$ elements and let
$|\mathbf m|=N-h$.  If
\[
 \left\langle\sum_{\rho\in Q}\rho,\sigma(\mathbf m)\right\rangle=1,
\]
then
\begin{equation}\label{eq:nonzero-power-two-valuation}
 \vTwo\bigl(b_Q(\mathbf m)\bigr)
 =\vTwo\!\left(\frac{(N-h)!}{\mathbf m!}\right)-(\ell-1).
\end{equation}
In particular, $b_Q(\mathbf m)\neq0$.
\end{theorem}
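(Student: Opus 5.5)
The plan is to run the same $J-2B$ expansion as in Lemma~\ref{lem:sign-permanent}, but on the deleted Walsh matrix. It uses \eqref{eq:coefficient-permanent} to move to $\per W_{\mathbf m}^{Q}$ and Lemma~\ref{lem:deleted-row-minor-parity} to kill all but one critical term. Put $n=N-h=2^r-2^\ell$. Since $s_2(n)=r-\ell$, Legendre gives $\vTwo(n!)=n-r+\ell$. By \eqref{eq:coefficient-permanent}, the claimed formula \eqref{eq:nonzero-power-two-valuation} is then equivalent to
\[
 \vTwo\bigl(\per W_{\mathbf m}^{Q}\bigr)=n-r+1 .
\]
Write $W_{\mathbf m}^{Q}=J_n-2B$, where $B=(x_j(\rho))$ with $x_j=\langle\cdot,d_j\rangle\in\whK^{*}$ and rows $\rho\in\whK\setminus Q$. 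Expanding as in the proof of Lemma~\ref{lem:sign-permanent} gives
\[
 \per W_{\mathbf m}^{Q}=\sum_{k=0}^{n}(-2)^k(n-k)!\,S_k,
 \qquad
 S_k=\sum_{|I|=|J|=k}\per B[I,J].
\]
The $k$-th prefactor has valuation $k+\vTwo((n-k)!)=n-s_2(n-k)$. I will therefore work modulo $2^{n-r+2}$, where only the $k$ with $s_2(n-k)=r-1$ can contribute; the value $s_2(n-k)=r$ would force $n-k=2^r-1>n$.

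Next I enumerate the critical indices. We have $s_2(n-k)=r-1$ exactly when $n-k=2^r-1-2^i$. The constraint $n-k\leq n$ forces $2^i\geq2^\ell-1$, hence $i\geq\ell$ because $\ell\geq2$. This gives $k=2^i-2^\ell+1$. For $i=\ell$ we get $k=1$. For every $i\geq\ell+1$ we get $k\geq2^\ell+1>|Q|$. Now $r>\ell\geq2$ gives $r\geq3$, so Lemma~\ref{lem:deleted-row-minor-parity} applies with $E=\whK$ and the functionals $x_j$. It makes $S_k$ even for all these large $k$, so their terms vanish modulo $2^{n-r+2}$. Hence
\[
 \per W_{\mathbf m}^{Q}\equiv -2(n-1)!\,S_1 \pmod{2^{n-r+2}},
\]
and $\vTwo(2(n-1)!)=n-s_2(n-1)=n-(r-1)$, since $s_2(2^r-2^\ell-1)=(r-\ell-1)+\ell$.

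It remains to show that $S_1$ is odd. Here $S_1$ is the total number of pairs $(\rho,j)$ with $\rho\notin Q$ and $\langle\rho,d_j\rangle=1$. For a fixed $j$ with $d_j\neq0$, exactly $N/2$ characters of $\whK$ pair to $1$ with $d_j$, and $N/2$ is even. If $d_j=0$, no character does. Either way, the count over $\rho\notin Q$ agrees modulo two with the count over $\rho\in Q$, which is $\langle\sum_{\rho\in Q}\rho,d_j\rangle \pmod 2$. Summing over $j$ gives
\[
 S_1\equiv\Bigl\langle\sum_{\rho\in Q}\rho,\sigma(\mathbf m)\Bigr\rangle=1\pmod2 .
\]
So $\vTwo(\per W_{\mathbf m}^{Q})=n-r+1$, and dividing by $\mathbf m!$ yields the statement. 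The main obstacle, and the step to check most carefully, is the enumeration of the critical $k$. The whole argument relies on the fact that every critical index other than $k=1$ exceeds $|Q|=2^\ell$; this is exactly where $\ell\geq2$ is used.
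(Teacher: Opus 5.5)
Your proposal is correct and follows essentially the same route as the paper: you reduce via \eqref{eq:coefficient-permanent} to $\vTwo(\per W_{\mathbf m}^{Q})=n-r+1$, expand $J_n-2B$, identify the critical indices $k=2^i-2^\ell+1$, and kill every critical index except $k=1$ with Lemma~\ref{lem:deleted-row-minor-parity}. The only cosmetic difference is in the parity of $S_1$: you count, for each column, the $N/2$ characters pairing to one with $d_j$, whereas the paper uses $\sum_{\rho\in\whK}\rho=0$ to move from $\rho\notin Q$ to $\rho\in Q$; these are the same fact.
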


\begin{proof}
Put $n=N-h$, choose a labeled list $d_1,\ldots,d_n$ with profile
$\mathbf m$, let $J_n$ be the $n\times n$ all-ones matrix, and write the
deleted Walsh matrix as
\[
 W_{\mathbf m}^{Q}=J_n-2B,
 \qquad
 B_{\rho,j}=\langle\rho,d_j\rangle
 \quad(\rho\in\whK\setminus Q,\ 1\leq j\leq n).
\]
As in the sign-permanent expansion,
\[
 \per(J_n-2B)
 =
 \sum_{k=0}^{n}(-2)^k(n-k)!
 \sum_{\substack{I\subseteq\whK\setminus Q,\ J\subseteq\{1,\ldots,n\}\\
                  |I|=|J|=k}}
       \per B[I,J].
\]
 The scalar part of the $k$th term has valuation
\[
 \vTwo\bigl(2^k(n-k)!\bigr)=n-s_2(n-k).
\]
Put $t=n-k$.  Since $0\leq t\leq n=2^r-2^\ell<2^r$, one has
$s_2(t)\leq r-1$: equality $s_2(t)=r$ would force
$t=2^r-1>n$.  Equality $s_2(t)=r-1$ holds precisely when
\[
 t=2^r-1-2^j
 \quad\text{for some }j\in\{\ell,\ldots,r-1\},
\]
because the missing bit must have index at least $\ell$ in order that
$t\leq n$.  Hence the minimum is $n-r+1$, attained exactly for
\[
 k=2^j-2^\ell+1,
 \qquad j=\ell,\ldots,r-1.
\]
The first such index is $k=1$; every later one is at least $h+1$.

Lemma~\ref{lem:deleted-row-minor-parity}, applied with
$E=\whK$ and $x_j(\rho)=\langle\rho,d_j\rangle$, makes the inner minor sum
even at every later minimum index.  At $k=1$, that sum is
\begin{align*}
 &\sum_{\rho\notin Q}\sum_{j=1}^{n}\langle\rho,d_j\rangle\\
 &=\left\langle\sum_{\rho\notin Q}\rho,
                   \sigma(\mathbf m)\right\rangle\\
 &=\left\langle\sum_{\rho\in Q}\rho,
                   \sigma(\mathbf m)\right\rangle
 \equiv1\pmod2.
\end{align*}
Here $\sum_{\rho\in\whK}\rho=0$ because $r\geq3$, which justifies the
third equality.  Thus the $k=1$ term is the unique odd contribution after
division by
$2^{n-r+1}$, and
\[
 \vTwo\bigl(\per W_{\mathbf m}^{Q}\bigr)=n-r+1.
\]
Since $s_2(n)=r-\ell$,
\[
 \vTwo(n!)=n-r+\ell.
\]
The coefficient--permanent identity \eqref{eq:coefficient-permanent} now gives
\eqref{eq:nonzero-power-two-valuation}.
\end{proof}

The restriction $\ell\geq2$ is essential.  The two-hole case follows the
same Fourier map, but its deleted-coefficient arithmetic is different and is
supplied by the two-deleted lemma of~\cite{TwoHoleManuscript}.  The common
power-of-two laws above begin with four deleted characters.

\begin{lemma}[Affine-coset filter coefficient]
\label{lem:affine-filter-walsh}
Let $K\cong\F_2^r$, $N=|K|$, let $t\geq1$, and put $j=2^t$.  Let
$\eta\in\whK\setminus\{0\}$, let $E\leq\whK$ have dimension $t$ with
$\eta\notin E$, and put $U=\Span\{\eta\}\oplus E$.  Suppose
$p_1,\ldots,p_j\in K$ satisfy $\langle\eta,p_i\rangle=1$, put
$d=p_1+\cdots+p_j$, and write
\[
 F=\bigwedge_{i=1}^j\Omega_{\eta,p_i}
   =\sum_{R\in\binom{\whK}{2j}}f_Re_R.
\]
For each $\xi\in\whK$, put $Q=\xi+U$ and define $y_i\in E^*$ by
$y_i(\tau)=\langle\tau,p_i\rangle$.  There is an orientation sign
$\omega_{Q,E,\eta}\in\{\pm1\}$ such that
\[
 f_Q=\omega_{Q,E,\eta}(2N)^j\eps_\xi(d)
      P_E(y_1,\ldots,y_j).
\]
Consequently,
\[
 f_Q\neq0\quad\Longleftrightarrow\quad d\in U^\perp,
 \qquad
 \vTwo(f_Q)=j(r+1)+j-1
\]
whenever $f_Q\neq0$.
\end{lemma}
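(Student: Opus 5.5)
The plan is to expand the wedge product factor by factor, using the representative-free form of a legal edge filter from the proof of Lemma~\ref{lem:shift-covariance}. Since each $\langle\eta,p_i\rangle=1$, Lemma~\ref{lem:edge-filter} gives
\[
 \Omega_{\eta,p_i}=-N\sum_{\rho\in\whK}\eps_\rho(p_i)\,e_\rho\wedge e_{\rho+\eta}.
\]
The summands for $\rho$ and $\rho+\eta$ are equal: the sign $\eps_\eta(p_i)=-1$ cancels the sign from swapping the two wedge factors. Hence
\[
 \Omega_{\eta,p_i}=-2N\sum_{\{\rho,\rho+\eta\}}\eps_\rho(p_i)\,e_\rho\wedge e_{\rho+\eta},
\]
where the sum runs over the $N/2$ cosets of $\Span\{\eta\}$, each with an arbitrary representative $\rho$. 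The summand does not depend on that choice.

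Next I would take the wedge over $i=1,\ldots,j$ and extract the coefficient of $e_Q$ for $Q=\xi+U$. A product of $j$ pair-two-forms is a nonzero multiple of $e_Q$ exactly when the $j$ chosen $\eta$-pairs are distinct and their union is $Q$. Since $U=\Span\{\eta\}\oplus E$, the $\eta$-pairs inside $Q$ are precisely $\{\xi+\tau,\xi+\tau+\eta\}$ for $\tau\in E$, and we may use $\xi+\tau$ as representative. So the contributing terms are indexed by bijections $\sigma:\{1,\ldots,j\}\to E$, assigning factor $i$ the pair with representative $\xi+\sigma(i)$.

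Two-forms commute, so every such term equals the same fixed oriented element $\bigwedge_{\tau\in E}(e_{\xi+\tau}\wedge e_{\xi+\tau+\eta})=\pm e_Q$, with no $\sigma$-dependent reordering sign. This common sign, together with $(-1)^j$, is absorbed into $\omega_{Q,E,\eta}$. The scalar coefficient is then
\[
 (-2N)^j\sum_\sigma\prod_{i=1}^j\eps_{\xi+\sigma(i)}(p_i)
 =(-2N)^j\,\eps_\xi(d)\sum_\sigma\prod_{i=1}^j(-1)^{y_i(\sigma(i))}.
\]
The last sum is exactly $P_E(y_1,\ldots,y_j)$, with rows indexed by $\tau\in E$ and columns by $i$. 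This proves the displayed formula.

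For the consequences I would apply Lemma~\ref{lem:full-walsh-permanent}. It says $P_E\neq0$ if and only if $\sum_i y_i=0$ in $E^*$, that is, $\langle\tau,d\rangle=0$ for all $\tau\in E$. Moreover $\langle\eta,d\rangle=\sum_i\langle\eta,p_i\rangle=j\equiv0$, because $j=2^t$ is even when $t\geq1$. Thus $d$ also annihilates $\eta$, and the combined condition is exactly $d\in U^\perp$. In that case $\vTwo(P_E)=j-1$, and $\vTwo((2N)^j)=j(r+1)$, giving the stated valuation.

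The main obstacle I expect is the bookkeeping step that shows each decomposition of $Q$ into $\eta$-pairs matched to the factors is counted exactly once, with a $\sigma$-independent orientation sign. I would handle it by first normalizing each filter to the sum over cosets, with factor $2N$ and representative-independent summands. Then I would invoke commutativity of two-forms to see that all surviving terms share one oriented basis element.
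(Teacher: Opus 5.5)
Your proposal is correct and takes the same route as the paper. You expand each legal filter over the $\eta$-pairs with coefficient $-2N\eps_\rho(p_i)$, index the contributions to $e_Q$ by bijections from the $j$ filters to the pairs $\{\xi+\tau,\xi+\tau+\eta\}$ ($\tau\in E$), and use commutativity of two-forms to get one common orientation sign, which yields $\eps_\xi(d)P_E(y_1,\ldots,y_j)$; then Lemma~\ref{lem:full-walsh-permanent} together with $\langle\eta,d\rangle=j\equiv0$ gives the support condition and the valuation. You simply spell out the representative-independence and the exactly-once counting that the paper's proof compresses into a few lines.
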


\begin{proof}
Orient the $\eta$-orbits of $Q$ as
$(\xi+\tau,\xi+\tau+\eta)$, $\tau\in E$.  By
Lemma~\ref{lem:edge-filter}, assigning the $i$th filter to the orbit indexed
by $\tau$ contributes $-2N\eps_{\xi+\tau}(p_i)$, up to one common
orientation sign.  The degree-two orbit factors commute, so summing over all
bijections from the filters to the orbits gives the displayed permanent
formula.  Moreover,
\[
 \sum_{i=1}^j y_i(\tau)=\langle\tau,d\rangle\qquad(\tau\in E).
\]
Lemma~\ref{lem:full-walsh-permanent} therefore makes the permanent nonzero
exactly when $d$ annihilates $E$.  Legality and the evenness of $j$ give
$\langle\eta,d\rangle=j=0$ in $\F_2$, so this is equivalent to
$d\in U^\perp$.  The valuation follows from $N=2^r$ and
$\vTwo(P_E)=j-1$.
\end{proof}

We now package the affine boundary argument used at both levels.  The
parameter $j$ counts prescribed internal edges in each affine half:
$j=2$ means four holes in total, while $j=4$ means eight.

\begin{proposition}[Uniform affine-filter criterion]
\label{prop:uniform-affine-filter}
Let $K\cong\F_2^r$, let $N=|K|$, and let $j\in\{2,4\}$.  Assume $r\geq3$,
and assume $r\geq4$ when $j=4$.  Let
\[
 P=(p_1,\ldots,p_j),
 \qquad
 R=(q_1,\ldots,q_j)
\]
be ordered tuples in $K\setminus\{0\}$, and let $\mathbf m$ be a profile of
size $N-2j$ satisfying
\[
 \Sigma:=\sigma(\mathbf m)
 =
 \sum_{i=1}^{j}p_i+\sum_{i=1}^{j}q_i.
\]
Suppose there are distinct characters $\eta,\theta\in \Sigma^\perp$ such that
\[
 \langle\eta,p_i\rangle=1,
 \qquad
 \langle\theta,q_i\rangle=1
 \qquad(1\leq i\leq j).
\]
Put
\[
 d=\sum_{i=1}^{j}p_i,
 \qquad
 T=\Span\{\eta,\theta\},
\]
so $\dim T=2$: legality makes $\eta$ and $\theta$ nonzero, and two distinct
nonzero vectors over $\F_2$ are linearly independent.  Define
\[
 F=\bigwedge_{i=1}^{j}\Omega_{\eta,p_i},
 \qquad
 G=\bigwedge_{i=1}^{j}\Omega_{\theta,q_i}.
\]
If $j=4$, assume in addition that
\[
 \dim(\Sigma^\perp\cap d^\perp)\geq3.
\]
Then
\[
 \Phi_{\mathbf m}(F,G)\neq0.
\]
Consequently there are locally legal deleted sets $A,B\subseteq K$, each of
size $2j$, for which
\[
 [z^{\mathbf m}]\det C(z)_{A^c,B^c}\neq0.
\]
\end{proposition}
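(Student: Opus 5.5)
The plan is to compute $\Phi_{\mathbf m}(F,G)=\sum_Q f_Qg_Qb_Q(\mathbf m)$ exactly on its common support and show that no cancellation occurs. First I would identify that support. By Lemma~\ref{lem:edge-filter}, each factor $\Omega_{\eta,p_i}$ is supported on $\eta$-orbits $\{\xi,\xi+\eta\}$, so every $Q\in\operatorname{supp}(F)$ is a union of $j$ distinct $\eta$-orbits. Likewise, every $Q\in\operatorname{supp}(G)$ is a union of $j$ distinct $\theta$-orbits. A set stable under both translations is a union of $T$-cosets.

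For $j=2$, this forces $|Q|=4$ and $Q=\xi+T$. For $j=4$, $Q$ is a union of two $T$-cosets $\xi+T$ and $\xi'+T$. Such a union is the single affine coset $\xi+U$ with $U=T\oplus\Span\{\gamma\}$, $\gamma=\xi+\xi'\notin T$. In both cases every surviving $Q$ is a coset of a subspace $U\supseteq T$ of dimension $\ell=j/2+1$. Moreover $U$ is determined by $Q$ as the span of $Q+Q$.

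Next I would decide which cosets contribute nonzero terms. Set $d'=\sum q_i$, so that $\Sigma=d+d'$. Legality together with $j$ even gives $\langle\eta,d\rangle=\langle\theta,d'\rangle=0$. Combined with $\eta,\theta\in\Sigma^\perp$, this gives $\langle\theta,d\rangle=\langle\eta,d'\rangle=0$, so $d,d'\in T^\perp$. I then apply Lemma~\ref{lem:affine-filter-walsh} twice. For $F$, take $E$ to be a complement of $\Span\{\eta\}$ in $U$ (so $t=1$ or $2$). For $G$, use the roles of $\theta$ and $\Span\{\eta\}$ or $\Span\{\eta,\gamma\}$. This shows that $f_Q\neq0$ iff $d\in U^\perp$ and $g_Q\neq0$ iff $d'\in U^\perp$, each with a valuation independent of $Q$. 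Theorem~\ref{thm:affine-power-two-deletion} applies because $\ell<r$: we need $\ell=2<r$ when $r\geq3$, and $\ell=3<r$ when $r\geq4$. It shows that $b_Q(\mathbf m)\neq0$ iff $\Sigma\in U^\perp$, with valuation $\vTwo((N-2j)!/\mathbf m!)$, again independent of $Q$.

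For $j=2$, we have $U=T$ and all three conditions already hold. For $j=4$, the conditions amount to $\gamma\in W:=\Sigma^\perp\cap d^\perp$, since $d'=\Sigma+d$. Note that $T\leq W$, and the hypothesis $\dim W\geq3$ guarantees at least one admissible $U$. In fact the admissible $U$ are exactly the $3$-dimensional subspaces $T<U\leq W$, and there are $2^{\dim W-2}-1$ of them, an odd number.

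The final and main step is ruling out cancellation. Within a fixed $U$, I would compare the term at $\xi+U$ with the term at $U$ using Lemma~\ref{lem:shift-covariance}. This gives $f_{U+\zeta}g_{U+\zeta}=\eps_\zeta(d+d')f_Ug_U$. The orientation sign that $S_\zeta$ puts on $e_U$ appears in both $f$ and $g$ and therefore squares away. Lemma~\ref{lem:character-translation} gives $b_{U+\zeta}=\eps_\zeta(\Sigma)b_U$. Since $d+d'=\Sigma$, all $N/2^\ell$ cosets of a fixed $U$ contribute identical terms.

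For $j=2$, this gives $\Phi_{\mathbf m}(F,G)=(N/4)f_Tg_Tb_T(\mathbf m)\neq0$ directly. For $j=4$, every nonzero term has the same $2$-adic valuation $V$ by the valuation statements above. Hence $\Phi_{\mathbf m}(F,G)=2^{r-3}\sum_U f_Ug_Ub_U(\mathbf m)$ is $2^{r-3}$ times a sum of an odd number of terms of the form $2^V\cdot(\text{odd})$. That sum has valuation exactly $V$ and is therefore nonzero.

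The delicate points are bookkeeping rather than new ideas. One must check that the sign $\omega$ from Lemma~\ref{lem:affine-filter-walsh} genuinely cancels between $F$ and $G$ under translation. One must also check that no $Q$ is counted twice, which holds because $U$ is recovered from $Q$. The final clause then follows from Proposition~\ref{prop:filtered-minor}.
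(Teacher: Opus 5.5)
Your proposal is correct and follows the paper's proof essentially step for step. In both, $T$-invariance of the common support forces cosets $\xi+U$ with $T\subseteq U$. Lemma~\ref{lem:affine-filter-walsh} and Theorem~\ref{thm:affine-power-two-deletion} then restrict the contributing subspaces to $T\subseteq U\subseteq\Sigma^\perp\cap d^\perp$, each with a uniform exact valuation. Shift covariance and character translation make every coset of a fixed $U$ contribute the same term. For $j=4$ this leaves an odd number $2^{\dim(\Sigma^\perp\cap d^\perp)-2}-1$ of summands of equal exact valuation, which cannot cancel. Your explicit check that $\ell<r$ in Theorem~\ref{thm:affine-power-two-deletion} is a detail the paper leaves implicit.
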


\begin{proof}
Put
\[
 d_R=\sum_{i=1}^{j}q_i=d+\Sigma.
\]
Since $j$ is even and the filters are legal,
\[
 \langle\eta,d\rangle=0,
 \qquad
 \langle\theta,d\rangle
 =\langle\theta,d_R+\Sigma\rangle=0.
\]
Thus
\[
 T\subseteq \Sigma^\perp\cap d^\perp.
\]

By Lemma~\ref{lem:edge-filter}, a set in the support of $F$ is a union of
$j$ $\eta$-pairs, while a set in the support of $G$ is a union of $j$
$\theta$-pairs.  Every common-support set is therefore invariant under
$T$.  When $j=2$, such a four-set is a coset $Q=\xi+T$.  When $j=4$, such
an eight-set is the union of two distinct $T$-cosets and hence has the form
\[
 Q=\xi+U
\]
for a unique three-dimensional subspace $U$ containing $T$.

Fix one of these subspaces $U$, where $U=T$ for $j=2$, and choose a
complement $E$ of $\Span\{\eta\}$ in $U$.  The $\eta$-orbits in
$Q=\xi+U$ are indexed by $E$, so
Lemma~\ref{lem:affine-filter-walsh} gives
\[
 f_Q\neq0
 \quad\Longleftrightarrow\quad
 d\in U^\perp,
 \qquad
 \vTwo(f_Q)=j(r+1)+j-1
\]
whenever the coefficient is nonzero.  The same argument on the right gives
\[
 g_Q\neq0
 \quad\Longleftrightarrow\quad
 d_R\in U^\perp,
 \qquad
 \vTwo(g_Q)=j(r+1)+j-1.
\]
Because $d_R=d+\Sigma$, both coefficients are nonzero exactly when
\begin{equation}\label{eq:uniform-affine-support}
 T\subseteq U\subseteq \Sigma^\perp\cap d^\perp.
\end{equation}

For every $U$ satisfying \eqref{eq:uniform-affine-support},
Theorem~\ref{thm:affine-power-two-deletion} applies to every coset
$\xi+U$, since $\Sigma\in U^\perp$.  Hence
\[
 b_{\xi+U}(\mathbf m)\neq0,
 \qquad
 \vTwo\bigl(b_{\xi+U}(\mathbf m)\bigr)
 =
 \vTwo\!\left(\frac{(N-2j)!}{\mathbf m!}\right).
\]

It remains to rule out cancellation.  Lemma~\ref{lem:shift-covariance}
shows that translation by $\zeta\in\whK$ multiplies the product of the two
filter coefficients by
\[
 \eps_\zeta(d+d_R)=\eps_\zeta(\Sigma);
\]
the two orientation signs cancel in the product.  By
Lemma~\ref{lem:character-translation}, the same translation multiplies the
deleted coefficient by $\eps_\zeta(\Sigma)$.  Therefore
\[
 f_{Q+\zeta}g_{Q+\zeta}b_{Q+\zeta}(\mathbf m)
 =
 f_Qg_Qb_Q(\mathbf m).
\]

When $j=2$, the only possible direction is $U=T$, and hence
\[
 \Phi_{\mathbf m}(F,G)
 =
 \frac N4 f_Tg_Tb_T(\mathbf m)\neq0.
\]
Suppose $j=4$.  The number of three-dimensional subspaces satisfying
\eqref{eq:uniform-affine-support} is
\[
 2^{\dim(\Sigma^\perp\cap d^\perp)-2}-1,
\]
which is positive and odd.  Taking the linear subspace $U$ as the
representative of its translation orbit gives
\[
 \Phi_{\mathbf m}(F,G)
 =
 \frac N8
 \sum_{\substack{\dim U=3\\
                  T\subseteq U\subseteq \Sigma^\perp\cap d^\perp}}
      f_Ug_Ub_U(\mathbf m).
\]
The contribution from every $U$-orbit has the same exact $2$-adic valuation.
After the common power of two is removed, each contribution is odd; an odd
number of odd integers cannot sum to zero.
Thus $\Phi_{\mathbf m}(F,G)\neq0$ also in this case.
Proposition~\ref{prop:filtered-minor} supplies $A$ and $B$.
\end{proof}

The preceding results provide the common Fourier reduction and the two
coefficient laws.  The remaining support geometry is established in the case
sections.  Proposition~\ref{prop:four-holes-affine} and
Lemma~\ref{lem:nonaffine-filter-support} treat the affine and nonaffine
four-character supports.  Sections~\ref{sec:eight-zero} and
~\ref{sec:eight-nonzero} then determine which eight-character supports can
arise and how their contributions are controlled.  These are forward
references to later arguments, not additional hypotheses of the common
framework.

\section{Four holes}\label{sec:four}

The two-hole theorem chooses one internal edge in each affine half.  The
constructive four-hole result in~\cite{TwoHoleManuscript} went further only
when every request value had even multiplicity.  We now remove that
global symmetry: the four internal directions and the crossing profile may
have arbitrary multiplicities, subject only to admissibility.

The point is not that every legal placement works.  The four directions must
be split as
\[
 p_1,p_2\mid q_1,q_2,
\]
where the vertical bar separates the two affine halves, and the endpoints of
the two edges on each side must be chosen so that the prescribed crossing
profile can be realized on the remaining vertices.  The algebraic target is
the following proposition.

\begin{proposition}[Four-hole boundary noncancellation]
\label{prop:four-hole-boundary}
Let $K\cong\F_2^r$ with $r\geq3$ and $N=|K|$.  Let
$h_1,h_2,h_3,h_4\in K\setminus\{0\}$, and let $\mathbf m$ be a profile of
size $N-4$ satisfying
\[
 \sigma(\mathbf m)=h_1+h_2+h_3+h_4.
\]
Then the holes can be ordered as $p_1,p_2,q_1,q_2$, and there are
four-element sets $A,B\subseteq K$ such that $A$ is the union of two
disjoint edges of differences $p_1,p_2$, $B$ is the union of two disjoint
edges of differences $q_1,q_2$, and
\[
 [z^{\mathbf m}]\det C(z)_{A^c,B^c}\neq0.
\]
\end{proposition}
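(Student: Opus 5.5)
The plan is a case split on how the four holes interact with $\Sigma:=\sigma(\mathbf m)=h_1+h_2+h_3+h_4$. In the generic case I reduce to Proposition~\ref{prop:uniform-affine-filter} with $j=2$. In the residual case I build filters whose common support consists of nonaffine four-sets, and conclude with Theorem~\ref{thm:nonzero-sum-power-two-deletion}. Either way the output is $\Phi_{\mathbf m}(F,G)\neq0$, and Proposition~\ref{prop:filtered-minor} then supplies $A$, $B$ and the nonzero coefficient.

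\emph{Affine case.} Fix a split $p_1,p_2\mid q_1,q_2$ of the holes. I look for characters satisfying
\[
\eta\in\Sigma^\perp,\quad \langle\eta,p_1\rangle=\langle\eta,p_2\rangle=1,
\qquad
\theta\in\Sigma^\perp,\quad \langle\theta,q_1\rangle=\langle\theta,q_2\rangle=1,
\qquad \eta\neq\theta .
\]
The three linear equations for $\eta$ are inconsistent only when a relation among $p_1,p_2,\Sigma$ yields $0=1$. Since $p_1,p_2\neq0$, this happens exactly when $\Sigma\in\{p_1,p_2\}$. Otherwise the solutions form an affine subspace of dimension at least $r-3\geq0$, and the same holds for $\theta$.

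Distinctness $\eta\neq\theta$ can fail only when both solution sets are the same single point. For $r\geq3$ I expect to avoid this either by a dimension count or by swapping the two sides or the split. So whenever some split has $\Sigma\notin\{p_1,p_2,q_1,q_2\}$, Proposition~\ref{prop:uniform-affine-filter} ($j=2$) finishes the proof.

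A split can always be chosen that way unless $\Sigma$ equals one of the holes, say $\Sigma=h_1$. This forces $h_2+h_3+h_4=0$. The few degenerate coincidences of the solution sets would be checked by hand.

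\emph{Nonaffine case} ($\Sigma=h_1$, so $h_2+h_3+h_4=0$, and $\Sigma\neq0$). Here no split avoids $\Sigma$ on some side, so the pure filters $\Omega_{\eta,p_1}\wedge\Omega_{\eta,p_2}$ are useless. Indeed, when the same $\eta$ is used on a side, every common support set is a union of $\eta$-pairs. Its character sum is then $0$, and the affine law kills all such terms because $\Sigma\notin T^\perp$.

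Instead I will use mixed filters
\[
F=\Omega_{\eta_1,p_1}\wedge\Omega_{\eta_2,p_2},
\qquad
G=\Omega_{\theta_1,q_1}\wedge\Omega_{\theta_2,q_2},
\]
with all factors legal. A set in the support of $F$ has the form $\{\xi,\xi+\eta_1,\xi',\xi'+\eta_2\}$, with character sum $\eta_1+\eta_2$. I choose the characters so that $\eta_1+\eta_2=\theta_1+\theta_2$ and $\langle\eta_1+\eta_2,\Sigma\rangle=1$. Then every common support set $Q$ satisfies the hypothesis of Theorem~\ref{thm:nonzero-sum-power-two-deletion} with $\ell=2$. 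Consequently every $b_Q(\mathbf m)$ has the same exact valuation $\vTwo\bigl((N-4)!/\mathbf m!\bigr)-1$.

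By Lemma~\ref{lem:edge-filter}, each nonzero $f_Q$ and $g_Q$ is a signed product of terms $\pm 2N$, possibly summed over two factor assignments. By Lemma~\ref{lem:shift-covariance} and Lemma~\ref{lem:character-translation}, the phases of $f_Qg_Qb_Q(\mathbf m)$ are constant along translation orbits, exactly as in the proof of Proposition~\ref{prop:uniform-affine-filter}. So $\Phi_{\mathbf m}(F,G)$ reduces to $N/|\mathrm{Stab}|$ times a sum over orbit representatives.

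\emph{Main obstacle.} The hard step is the combinatorics of the common support $\operatorname{supp}F\cap\operatorname{supp}G$ modulo translation. I need to count translation orbits of four-sets that are simultaneously a disjoint union of an $\eta_1$-pair and an $\eta_2$-pair and of a $\theta_1$-pair and a $\theta_2$-pair. I must also track when $f_Q$ has a doubled coefficient (when $Q$ admits both factor assignments, or when the pairs share a coset), because that changes the $2$-adic valuation.

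I would first try the simplest choice $\{\theta_1,\theta_2\}=\{\eta_1,\eta_2\}$. The common support is then essentially $\operatorname{supp}F$, and I would show that the orbits with minimal valuation of $f_Qg_Q$ are odd in number. If this parity fails, the fallback is to choose $\theta_1=\eta_1+\eta_2+\theta_2$ with $\theta_2$ generic. This should make the intersection a single rigid configuration $\{\xi,\xi+\eta_1,\xi+\theta_1,\xi+\eta_1+\theta_1\}$-type up to translation, giving one odd term. The constraint $h_2+h_3+h_4=0$ will be used to guarantee that legal characters with the required pairings exist when $r\geq3$.
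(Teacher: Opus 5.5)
\textbf{Verdict.} The structure is sound and is the same as the paper's. It does not yet prove the statement, because the two steps that carry the proof are only announced ("I expect", "would be checked by hand", "I would show").

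\textbf{Same structure as the paper.} When $\Sigma$ is not a hole you reduce to Proposition~\ref{prop:uniform-affine-filter} with $j=2$. When $\Sigma$ is a hole you use mixed filters whose common supports have character sum $\kappa=\eta_1+\eta_2=\theta_1+\theta_2$ with $\langle\kappa,\Sigma\rangle=1$, so every surviving $b_Q(\mathbf m)$ has the same valuation. Your explanation of why equal-character filters fail there is correct.

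\textbf{Gap in the affine branch: distinctness at $r=3$.} When $r=3$ and $\Sigma\neq0$, distinctness of $\eta,\theta$ is a real issue. It is not settled by a dimension count or by swapping sides.
\begin{itemize}
\item In $K/\Span\{\Sigma\}\cong\F_2^2$ the four images sum to zero, so they are $\bar a,\bar a,\bar a,\bar a$ or $\bar a,\bar a,\bar b,\bar b$.
\item A split placing $\bar a,\bar b$ on each side makes both solution sets the same singleton.
\item You must choose the split that pairs equal images. Then each side has one equation on $\Sigma^\perp$ and two solutions, so distinct $\eta,\theta$ exist.
\end{itemize}
This is Lemma~\ref{lem:zero-sum-character-choice} applied in the quotient, which is what the paper does. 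For $r\geq4$ your bound $r-3\geq1$ on the dimension of the solution set suffices.

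\textbf{Gap in the nonaffine branch: no characters and no noncancellation count.} You exhibit no legal characters and carry out no count showing $\Phi_{\mathbf m}(F,G)\neq0$. Both can be supplied, and your ``simplest choice'' $\theta_i=\eta_i$ in fact works. Write $\Sigma=h_1$ and $\{h_2,h_3,h_4\}=\{a,b,a+b\}$.
\begin{itemize}
\item If $\Sigma\notin\Span\{a,b\}$: extend $\Sigma,a,b$ to a basis with dual coordinate characters $\chi_\Sigma,\chi_a,\chi_b$. Take $p_1=\Sigma$, $p_2=a$, $q_1=a+b$, $q_2=b$, with $\eta_1=\chi_\Sigma+\chi_a$ and $\eta_2=\chi_a+\chi_b$.
\item If $\Sigma\in\Span\{a,b\}$: relabel so that $\Sigma=a+b$. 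Take $p_1=q_1=a+b$, $p_2=a$, $q_2=b$, with $\eta_1=\chi_a$ and $\eta_2=\chi_a+\chi_b$.
\end{itemize}
In both cases all four filters are legal, $\eta_1\neq\eta_2$, and $\langle\kappa,\Sigma\rangle=1$.

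Now set $G=\Omega_{\eta_1,q_1}\wedge\Omega_{\eta_2,q_2}$.
\begin{itemize}
\item Since $\eta_1\neq\eta_2$, a disjoint union of an $\eta_1$-pair and an $\eta_2$-pair has a unique such decomposition. Hence $|f_Q|=|g_Q|=4N^2$ on the common support.
\item The common support consists of all $\frac N2\bigl(\frac N2-2\bigr)$ such sets.
\item Translation orbits are free because $\kappa\neq0$, so there are $2^{r-2}-1$ orbits.
\item This is an odd number of orbit terms, all of the same valuation, so $\Phi_{\mathbf m}(F,G)\neq0$.
\end{itemize}

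\textbf{Comparison with the paper.} The paper instead uses your fallback. By Lemma~\ref{lem:nonzero-sum-character-choice} it takes $\kappa,\eta,\mu$ independent, with filters $\eta,\kappa+\eta$ on the left and $\mu,\kappa+\mu$ on the right. The common support is then the single free orbit of $Q_0=\{0,\eta,\mu,\kappa+\eta+\mu\}$, and $\Phi_{\mathbf m}(F,G)=cN\,b_{Q_0}(\mathbf m)$ with no parity count. Your route needs only two characters but relies on the count $2^{r-2}-1$ being odd.

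\textbf{An error in the fallback.} The rigid set you guessed, $\{\xi,\xi+\eta_1,\xi+\theta_1,\xi+\eta_1+\theta_1\}$, is an affine plane with character sum zero. It therefore cannot be a common support. The correct fourth point is $\xi+\kappa+\eta_1+\theta_1$.
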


Put
\[
 \Sigma=\sigma(\mathbf m)=h_1+h_2+h_3+h_4.
\]
The proof is organized by the structural obstruction that will reappear for
eight holes.  If $\Sigma$ is not one of the holes, quotienting by $\Span\{\Sigma\}$
turns the four images into a nonzero zero-sum configuration.  The affine
branch constructs legal filters for which the uniform affine criterion
reduces the filtered scalar to a nonzero affine deleted coefficient.
If $\Sigma$ is a hole, its quotient image is zero, so that construction cannot
even choose a legal character for it.  The nonaffine branch instead uses a
four-cycle to reduce the filtered scalar to one nonaffine deleted coefficient,
which is again nonzero.  The next subsection records the two coefficient laws,
and the following two subsections implement these complementary branches.

\subsection{Four-character geometry and coefficient laws}

A four-element subset $Q\subseteq\whK$ is \emph{affine} if it is a coset of
a two-dimensional subspace.  For four distinct points this is equivalent to
$\sum_{\rho\in Q}\rho=0$: after translating one point to $0$, the fourth
must be the sum of the other two independent points.  Thus a four-set is
either an affine plane or has nonzero character sum.  The two common
power-of-two theorems from Section~\ref{sec:common-map} match these two
geometries and give the needed estimates once the filter hypotheses are
arranged.

\begin{example}[The two four-character geometries]\label{ex:four-geometries}
Identify $K$ and $\whK$ with $\F_2^3$ by the standard dot product, and let
$u,v,w$ be the standard basis.  The set
\[
 Q_{\rm aff}=\{0,u,v,u+v\}
\]
is the plane $\Span\{u,v\}$; its character sum is zero and its translation
stabilizer is $\Span\{u,v\}$.  In contrast,
\[
 Q_{\rm non}=\{0,u,v,w\}
\]
has character sum $u+v+w\neq0$ and trivial translation stabilizer.  These
are the two geometries encountered in the degree-four expansion.
\end{example}

\begin{corollary}[Affine four-deleted Walsh coefficient]
\label{cor:affine-four-deleted}
Assume $r\geq3$.  Let $Q=\xi+T\subseteq\whK$, where $\dim T=2$, and let
$|\mathbf m|=N-4$.  Then
\[
 b_Q(\mathbf m)=0
 \quad\text{if }\sigma(\mathbf m)\notin T^\perp.
\]
If $\sigma(\mathbf m)\in T^\perp$, then
\[
 \vTwo\bigl(b_Q(\mathbf m)\bigr)
 =
 \vTwo\!\left(\frac{(N-4)!}{\mathbf m!}\right),
\]
and in particular $b_Q(\mathbf m)\neq0$.
\end{corollary}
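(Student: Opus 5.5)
This corollary is the case $\ell=2$ of Theorem~\ref{thm:affine-power-two-deletion}, so the plan is simply to check that its hypotheses hold and then specialize. Take $U=T$, a two-dimensional subspace of $\whK$, so $h=|U|=4$ and $\ell=2$. The theorem requires $\ell<r$, and this follows from the standing assumption $r\geq3$. The deleted set is $Q=\xi+T$, which is exactly the form $\xi+U$ treated there, and the profile size $|\mathbf m|=N-4=N-h$ matches.

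With these identifications the two conclusions transfer directly. First, if $\sigma(\mathbf m)\notin T^\perp$, the theorem gives $b_Q(\mathbf m)=0$. Second, if $\sigma(\mathbf m)\in T^\perp$, it gives the exact valuation $\vTwo(b_Q(\mathbf m))=\vTwo((N-4)!/\mathbf m!)$. A finite valuation means $b_Q(\mathbf m)\neq0$.

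It is worth noting where the internal ingredients enter, to confirm that nothing degenerates at $\ell=2$. The proof of Theorem~\ref{thm:affine-power-two-deletion} partitions the remaining rows into the $M-1=2^{r-2}-1$ nontrivial cosets of $T$. It evaluates each $4\times4$ block by Lemma~\ref{lem:full-walsh-permanent}, giving valuation $3$ for zero-sum restricted blocks. It handles the resulting coset sign matrix of order $2^{r-2}-1$ by Lemma~\ref{lem:sign-permanent}. Finally, it counts the surviving block partitions using Lemma~\ref{lem:block-partition-parity} in $T^*\cong\F_2^2$, where the count is odd.

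The one point needing care is $r=3$. There $M-1=1$, the sign permanent is a single $\pm1$ entry, and Lemma~\ref{lem:sign-permanent} still applies, since $\ell=1$ in that lemma. So I expect no real obstacle: the corollary is a direct specialization, and I would present it as such.
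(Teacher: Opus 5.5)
Your proposal is correct and is exactly the paper's proof: the corollary is Theorem~\ref{thm:affine-power-two-deletion} specialized to $\ell=2$ and $U=T$, with $r\geq3$ supplying $\ell<r$. Your remarks on the internal ingredients, including the $r=3$ edge case where the coset sign matrix has order $1$, are accurate but not needed.
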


\begin{proof}
Apply Theorem~\ref{thm:affine-power-two-deletion} with $\ell=2$ and $U=T$.
\end{proof}

\begin{corollary}[Nonaffine four-deleted Walsh coefficient]\label{cor:nonaffine-four-deleted}
Assume $r\geq3$.  Let $Q\subseteq\whK$ have four elements and let
$|\mathbf m|=N-4$.  If
\[
    \left\langle\sum_{\rho\in Q}\rho,\sigma(\mathbf m)\right\rangle=1,
\]
then
\[
\vTwo\bigl(b_Q(\mathbf m)\bigr)
=
\vTwo\left(\frac{(N-4)!}{\mathbf m!}\right)-1.
\]
In particular, $b_Q(\mathbf m)\neq0$.
\end{corollary}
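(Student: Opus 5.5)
This corollary is the case $\ell=2$ of Theorem~\ref{thm:nonzero-sum-power-two-deletion}, so I will apply that theorem with $h=4$. The hypotheses to check are:
\begin{itemize}
\item $2\leq\ell<r$, which holds because $r\geq3$;
\item $|Q|=h=4$;
\item $|\mathbf m|=N-4=N-h$;
\item the pairing condition $\langle\sum_{\rho\in Q}\rho,\sigma(\mathbf m)\rangle=1$, which is assumed in the statement.
\end{itemize}
Substituting $\ell-1=1$ into \eqref{eq:nonzero-power-two-valuation} gives exactly the displayed valuation.

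Nonvanishing follows because the valuation is a finite integer: $b_Q(\mathbf m)$ is an integer with finite $2$-adic valuation, so $b_Q(\mathbf m)\neq0$.

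For a reader who wants the mechanism inside the cited theorem, I would recall its key steps in this order.
\begin{enumerate}
\item Expand $\per W^Q_{\mathbf m}=\per(J_n-2B)$ with $n=N-4$.
\item The minimal scalar valuation $n-r+1$ occurs at $k=1$ and at $k=2^j-3$ for $j=3,\ldots,r-1$.
\item The later indices satisfy $k\geq5>|Q|$, so Lemma~\ref{lem:deleted-row-minor-parity} makes their minor sums even.
\item The $k=1$ minor sum equals $\langle\sum_{\rho\in Q}\rho,\sigma(\mathbf m)\rangle$, which is odd. Here one uses $\sum_{\rho\in\whK}\rho=0$, which needs $r\geq2$ and certainly holds for $r\geq3$.
\item Finally, \eqref{eq:coefficient-permanent} and $\vTwo((N-4)!)=n-r+2$ convert $\vTwo(\per W^Q_{\mathbf m})=n-r+1$ into the stated formula.
\end{enumerate}

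There is no real obstacle. The only care needed is confirming that the range condition $\ell<r$ holds at $r=3$ and that Lemma~\ref{lem:deleted-row-minor-parity} requires $r\geq3$ for $E=\whK$, both of which the corollary assumes.
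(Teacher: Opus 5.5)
Your proposal is correct and is exactly the paper's proof: the corollary is obtained by applying Theorem~\ref{thm:nonzero-sum-power-two-deletion} with $\ell=2$, and your check of its hypotheses (including $\ell<r$ from $r\geq3$) is accurate. Your optional recap of the mechanism is also consistent with that theorem's proof: the later minimal indices are $k=2^j-3\geq5>|Q|$, and $\vTwo((N-4)!)=n-r+2$.
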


\begin{proof}
Apply Theorem~\ref{thm:nonzero-sum-power-two-deletion} with $\ell=2$.
\end{proof}

In Example~\ref{ex:four-geometries}, taking $\sigma(\mathbf m)=w$ makes both deleted
coefficients survive, but Corollaries~\ref{cor:affine-four-deleted} and
\ref{cor:nonaffine-four-deleted} place them one power of $2$ apart.  This is
why the proof cannot average the affine and nonaffine sectors together.

\subsection{The affine branch: the total sum is not a hole}

Suppose that $\Sigma$ is not one of the four hole directions.  In
$K/\Span\{\Sigma\}$ the hole images remain nonzero and have zero total sum.
The next elementary lemma chooses the two legal character directions in that
zero-sum quotient.  Their lifts lie in $\Sigma^\perp$, so the uniform affine
criterion closes the branch.

\begin{lemma}[Character choice when the four holes sum to zero]
\label{lem:zero-sum-character-choice}
Assume $r\geq2$.  Let $h_1,h_2,h_3,h_4\in K\setminus\{0\}$ satisfy
\[
 h_1+h_2+h_3+h_4=0.
\]
Then the four labeled vectors can be ordered as
$p_1,p_2,q_1,q_2$, and there exist distinct characters
$\eta,\theta\in\whK$ such that
\[
 \langle\eta,p_1\rangle=\langle\eta,p_2\rangle=1,
 \qquad
 \langle\theta,q_1\rangle=\langle\theta,q_2\rangle=1.
\]
\end{lemma}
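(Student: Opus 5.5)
The plan is to take the obvious pairing first, $p_1=h_1,\ p_2=h_2,\ q_1=h_3,\ q_2=h_4$, and to reorder only in one degenerate case that I expect to arise only when $r=2$. For a pair $(x,y)$ of nonzero vectors, put
\[
 S(x,y)=\{\eta\in\whK:\langle\eta,x\rangle=\langle\eta,y\rangle=1\}.
\]
The lemma asks for an ordering together with $\eta\in S(p_1,p_2)$ and $\theta\in S(q_1,q_2)$ satisfying $\eta\neq\theta$.

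\textbf{Step 1: each $S(x,y)$ is a nonempty affine subspace.} If $x=y$, then $S(x,y)$ is the complement of the hyperplane $x^\perp$ in $\whK$, so it has $N/2$ elements. If $x\neq y$, then $x$ and $y$ are distinct nonzero vectors over $\F_2$, hence linearly independent. The two conditions are then independent affine equations, and $S(x,y)$ is a coset of $\Span\{x,y\}^\perp$ with $N/4$ elements. In either case $S(x,y)\neq\emptyset$.

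\textbf{Step 2: choose $\eta$ and $\theta$ for the pairing $h_1,h_2\mid h_3,h_4$.} Take $S_1=S(h_1,h_2)$ and $S_2=S(h_3,h_4)$. Both are nonempty by Step 1, so distinct $\eta\in S_1$ and $\theta\in S_2$ exist unless $S_1=S_2$ is a single point. By Step 1, a singleton requires $N/4=1$, that is, $r=2$, together with $h_1\neq h_2$ and $h_3\neq h_4$. Whenever $r\geq3$, or either pair repeats a vector, we are done at this step.

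\textbf{Step 3: the exceptional case.} Here $r=2$, and we write $K=\{0,a,b,a+b\}$ with $\{h_1,h_2\}$ a two-element subset of $K\setminus\{0\}$.

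\begin{itemize}
\item First, $S_1=\{\eta_0\}$ forces $\{h_3,h_4\}=\{h_1,h_2\}$. On $K\setminus\{0\}$ the character $\eta_0$ takes the value $1$ exactly on $h_1$ and $h_2$, and vanishes on the third nonzero vector $h_1+h_2$. Since also $\eta_0\in S_2$, both $h_3$ and $h_4$ lie in $\{h_1,h_2\}$. They are distinct, so $\{h_3,h_4\}=\{h_1,h_2\}$, which is consistent with the zero-sum hypothesis.
\item After renaming, the multiset is $\{a,a,b,b\}$ with $a\neq b$. We reorder it as $p_1=p_2=a$ and $q_1=q_2=b$.
\item We take $\eta$ to be the character with $\eta(a)=1$ and $\eta(b)=0$, and $\theta$ the character with $\theta(a)=0$ and $\theta(b)=1$. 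Both exist because $a,b$ is a basis of $K$. They are distinct, and they satisfy the required legality conditions.
\end{itemize}

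The only point that needs care is Step 3. It isolates the coincidence $S_1=S_2=\{\eta_0\}$ and shows that the zero-sum hypothesis then forces the multiset $\{a,a,b,b\}$, for which the repeated-vector pairing resolves the problem. Everything else is counting solutions of at most two linear equations over $\F_2$.
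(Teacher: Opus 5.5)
Your proof is correct, and it uses the same count as the paper. The solutions of $\chi(x)=\chi(y)=1$ form a nonempty coset of size $N/2$ or $N/4$, so only rank two can cause trouble, and there one pairs equal vectors.

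The organization differs. The paper splits by rank. For $r=2$ it uses the zero-sum hypothesis (all three multiplicities must be even) to classify the multiset as $\{a,a,a,a\}$ or $\{a,a,b,b\}$, and then pairs equal copies. You instead fix the default pairing and show that the only obstruction, $S_1=S_2$ being a singleton, already forces $\{h_3,h_4\}=\{h_1,h_2\}$.

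As a result, your argument never uses $h_1+h_2+h_3+h_4=0$, so it proves the conclusion for any four nonzero vectors when $r\geq2$. The paper's route is more direct in rank two, and it yields a multiset classification that the paper reuses in the $s=3$ case of Theorem~\ref{thm:main}\textup{(i)}.
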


\begin{proof}
Suppose first that $r=2$.  Write the three nonzero vectors of $K$ as
$a,b,a+b$.  The zero-sum condition says that their three multiplicities have
the same parity; since their total is four, all three multiplicities are even.
Thus the multiset has the form $\{a,a,a,a\}$ or $\{a,a,b,b\}$ after relabeling
$a,b$.  Order equal copies together.  For each nonzero $x\in K$, the set
$\{\chi\in\whK:\langle\chi,x\rangle=1\}$ has two elements, so distinct
characters can be chosen for the two pairs.

Now assume $r\geq3$ and take any ordering $p_1,p_2,q_1,q_2$.  The affine
system $\chi(p_1)=\chi(p_2)=1$ is consistent: if $p_1=p_2$ it is one
nontrivial equation, and otherwise $p_1,p_2$ are linearly independent.  Its
solution set has at least two elements.  The same holds for
$\chi(q_1)=\chi(q_2)=1$.  Choose $\eta$ from the first solution set and then
$\theta\neq\eta$ from the second.
\end{proof}

\begin{proposition}[Affine boundary away from the total-sum hole]
\label{prop:four-holes-affine}
Assume $r\geq3$.  Let $h_1,h_2,h_3,h_4\in K\setminus\{0\}$, put
\[
 \Sigma=h_1+h_2+h_3+h_4,
\]
and assume $\Sigma\notin\{h_1,h_2,h_3,h_4\}$.  If $\mathbf m$ is a profile of
size $N-4$ with $\sigma(\mathbf m)=\Sigma$, then, after a suitable ordering
$p_1,p_2,q_1,q_2$ of the holes, there are deleted vertex sets
$A,B\subseteq K$, each of size four, such that $A$ realizes
$p_1,p_2$, $B$ realizes $q_1,q_2$, and
\[
 [z^{\mathbf m}]\det C(z)_{A^c,B^c}\neq0.
\]
\end{proposition}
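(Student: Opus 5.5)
The plan is to reduce the statement to the uniform affine-filter criterion, Proposition~\ref{prop:uniform-affine-filter} with $j=2$. The only remaining task is to produce an ordering $p_1,p_2,q_1,q_2$ of the holes together with two distinct characters $\eta,\theta\in\Sigma^\perp$ satisfying $\langle\eta,p_1\rangle=\langle\eta,p_2\rangle=1$ and $\langle\theta,q_1\rangle=\langle\theta,q_2\rangle=1$. Once these are in hand, the hypothesis $\sigma(\mathbf m)=\Sigma=p_1+p_2+q_1+q_2$ of that proposition holds by assumption, and the proposition directly yields locally legal four-sets $A,B$ with $[z^{\mathbf m}]\det C(z)_{A^c,B^c}\neq0$. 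When $j=2$ there is no extra dimension hypothesis to check beyond $r\geq3$.

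To construct the characters, split according to whether $\Sigma=0$.

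\emph{Case $\Sigma=0$.} Here $\Sigma^\perp=\whK$. Lemma~\ref{lem:zero-sum-character-choice} applies directly in $K$, since $r\geq3\geq2$, and it supplies the ordering and the distinct characters $\eta,\theta$.

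\emph{Case $\Sigma\neq0$.} Pass to the quotient $\overline K=K/\Span\{\Sigma\}\cong\F_2^{r-1}$, where $r-1\geq2$.
\begin{enumerate}[label=\textup{(\arabic*)}]
\item Because $\Sigma\notin\{h_1,\ldots,h_4\}$, each image $\bar h_i$ is nonzero.
\item The images satisfy $\sum_i\bar h_i=\bar\Sigma=0$.
\item Lemma~\ref{lem:zero-sum-character-choice} therefore applies in $\overline K$ and gives an ordering together with distinct characters $\bar\eta,\bar\theta\in\widehat{\overline K}$ that are legal for the respective pairs.
\end{enumerate}
Composing with the projection $K\to\overline K$ identifies $\widehat{\overline K}$ injectively with $\Sigma^\perp\subseteq\whK$ and preserves the pairings. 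The lifts $\eta,\theta$ therefore lie in $\Sigma^\perp$, remain distinct, and satisfy $\langle\eta,p_i\rangle=\langle\bar\eta,\bar p_i\rangle=1$ and $\langle\theta,q_i\rangle=\langle\bar\theta,\bar q_i\rangle=1$.

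I do not expect a real obstacle, since the noncancellation work is already contained in Proposition~\ref{prop:uniform-affine-filter}. The point needing care is the quotient step, and in particular the use of the hypothesis $\Sigma\notin\{h_i\}$. That hypothesis is exactly what keeps the quotient images nonzero, so that legal characters exist in the quotient. Without it, some $\bar h_i$ would vanish and no character could pair to $1$ with it. This is the case deferred to the nonaffine branch.

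Finally, Lemma~\ref{lem:zero-sum-character-choice} needs dimension at least $2$, so the quotient requires $r\geq3$; this is where the hypothesis $r\geq3$ enters.
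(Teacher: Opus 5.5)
Your proposal is correct and follows the paper's proof: quotient by $\Span\{\Sigma\}$, apply Lemma~\ref{lem:zero-sum-character-choice} there, lift the two distinct characters into $\Sigma^\perp$, and invoke Proposition~\ref{prop:uniform-affine-filter} with $j=2$. Your separate treatment of $\Sigma=0$ tidies the paper's argument slightly, since the paper states $\dim\overline K=r-1$ even though the quotient is $K$ itself in that case, where the lemma applies all the same.
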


\begin{proof}
Pass to $\overline K=K/\Span\{\Sigma\}$.  The four images are nonzero and have
zero total sum.  Since $\dim\overline K=r-1\geq2$,
Lemma~\ref{lem:zero-sum-character-choice} gives an ordering
$\bar p_1,\bar p_2,\bar q_1,\bar q_2$ and distinct quotient characters
$\eta,\theta$ taking value one on their corresponding pairs.  Viewed on
$K$, these characters belong to $\Sigma^\perp$.  Proposition
~\ref{prop:uniform-affine-filter}, with $j=2$, now gives the required sets
$A,B$.
\end{proof}

\subsection{The nonaffine branch: the total sum is a hole}

Suppose now that $\Sigma$ occurs among the four holes.  Its image in
$K/\Span\{\Sigma\}$ is zero, so no quotient character can take value one on that
hole and the affine construction genuinely stops.  We instead choose the
split and four filter directions so that the common support is a single
translation orbit of nonaffine four-sets.  Its filter phase cancels the
translation phase of $b_Q(\mathbf m)$, so all orbit terms reinforce one
another.

The remaining hole geometry has two structural forms, distinguished by
whether $\Sigma$ lies in the plane generated by the other three holes.  The next
lemma chooses the filters in both forms.

\begin{lemma}[Character choice when the total sum is a hole]
\label{lem:nonzero-sum-character-choice}
Assume $r\geq3$.  Let $h_1,h_2,h_3,h_4\in K\setminus\{0\}$ satisfy
$\Sigma:=h_1+h_2+h_3+h_4\neq0$, and suppose that $\Sigma$ occurs among the four
holes.
Then the holes can be ordered as
\[
    p_1,p_2,q_1,q_2
\]
and there exist linearly independent characters
\[
    \kappa,\eta,\mu\in\whK
\]
such that, with
\[
    \theta=\kappa+\eta,
    \qquad
    \nu=\kappa+\mu,
\]
one has
\begin{equation}\label{eq:character-choice-conditions}
\langle\kappa,\Sigma\rangle=1,
\quad
\langle\eta,p_1\rangle=1,
\quad
\langle\theta,p_2\rangle=1,
\quad
\langle\mu,q_1\rangle=1,
\quad
\langle\nu,q_2\rangle=1.
\end{equation}
\end{lemma}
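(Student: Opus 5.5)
Throughout I write $\Sigma=h_4$ after relabeling. The hypotheses then say $h_1+h_2+h_3=0$, so $h_3=h_1+h_2$ with $h_1\neq h_2$, and $h_1,h_2,h_3$ span a plane $P$. The plan is to treat \eqref{eq:character-choice-conditions} as a system of affine-linear equations on the unknowns $\kappa,\eta,\mu\in\whK$, solved one unknown at a time. Solvability comes from linear independence of the constrained vectors; independence of $\kappa,\eta,\mu$ comes from using the leftover freedom to exclude the few bad candidates. The split follows the two structural forms announced before the lemma: $\Sigma\notin P$ and $\Sigma\in P$. Since $\Sigma$ is nonzero and $P\setminus\{0\}=\{h_1,h_2,h_3\}$, the second form means $\Sigma$ equals one of $h_1,h_2,h_3$.

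\emph{Form $\Sigma\notin P$.} I take $p_1=h_1$, $p_2=\Sigma$, $q_1=h_2$, $q_2=h_3$. The conditions become
\[
\kappa(\Sigma)=1,\quad \eta(h_1)=1,\quad \eta(\Sigma)=0,\quad \mu(h_2)=1,\quad \mu(h_3)=1+\kappa(h_3).
\]
Here $h_1,h_2,\Sigma$ are linearly independent, and $r\geq3$, so all of these can be prescribed freely. I impose the extra normalizations $\kappa(h_3)=0$ and $\eta(h_3)=0$, which force $\eta(h_2)=1$, and I require $\mu(h_3)=1$.

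Independence is then checked against the four candidates $0,\kappa,\eta,\kappa+\eta$ for $\mu$:
\begin{itemize}
\item $\eta\notin\{0,\kappa\}$, because $\eta(\Sigma)=0\neq\kappa(\Sigma)$.
\item $\mu\neq0$ and $\mu\neq\kappa$, because $\mu(h_3)=1$ while $\kappa(h_3)=0$.
\item $\mu\neq\eta$, because $\eta(h_3)=0$.
\item $\mu\neq\kappa+\eta$, because $(\kappa+\eta)(h_3)=0$.
\end{itemize}
Hence $\mu\notin\Span\{\kappa,\eta\}$ and the three characters are independent.

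\emph{Form $\Sigma\in P$.} By symmetry of $h_1,h_2,h_3$, assume $\Sigma=h_3$. The holes are then $h_1,h_2,\Sigma,\Sigma$ with $\Sigma=h_1+h_2$, and $h_1,h_2$ are independent. I try $p_1=h_1$, $p_2=\Sigma$, $q_1=h_2$, $q_2=\Sigma$. The conditions become
\[
\kappa(\Sigma)=1,\quad \eta(h_1)=1,\quad \eta(\Sigma)=0,\quad \mu(h_2)=1,\quad \mu(\Sigma)=0.
\]
These force $\eta(h_2)=1$ and $\mu(h_1)=1$, so $\eta$ and $\mu$ agree on $P$ and both vanish at $\Sigma$. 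Since $r\geq3$, I choose $\eta$ and $\mu$ agreeing on $P$ but differing outside $P$, so $\eta\neq\mu$. Independence then reduces to avoiding $\mu=\kappa+\eta$. Because $\kappa(\Sigma)=1$ rules out $\kappa=\eta+\mu$ identically on $\Sigma$ only if $\kappa+\eta$ vanishes there, I instead use the remaining freedom in $\kappa$: choose $\kappa(h_1)$ and the value of $\kappa$ off $P$ so that $\kappa\neq\eta+\mu$. This is possible because $\eta+\mu$ is a single nonzero character vanishing on $P$, while $\kappa$ is only constrained at $\Sigma$. If some sub-case fails, the fallback is to swap the roles of $h_1$ and $h_2$, or to change which copy of $\Sigma$ is paired with which side.

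\emph{Main obstacle.} I expect the difficulty to be this second, degenerate form, and more generally the coincidence patterns among the holes. In particular, $h_1=\Sigma$ is possible in the first labeling, so the choice of which hole plays $p_1$ must be made after identifying a hole that is not equal to $\Sigma$. For $r=3$ the solution sets have only two elements, so exclusion must be done exactly as above rather than by a cardinality count. My plan is to list the multiplicity patterns explicitly:
\begin{itemize}
\item $\Sigma$ occurs once and $\Sigma\notin P$;
\item $\Sigma$ occurs twice, which is the $\Sigma\in P$ form;
\item other coincidences such as $h_1=h_2$, which are excluded since $h_3\neq0$.
\end{itemize}
In each pattern I would verify the normalization choices by direct evaluation on a basis containing the relevant holes.
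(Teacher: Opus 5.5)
Your proof is correct and follows essentially the paper's route. You remove one copy of $\Sigma$, write the remaining holes as $a,b,a+b$, and split on whether $\Sigma\in\Span\{a,b\}$, using in each case an ordering that is a minor variant of the paper's (you put the copies of $\Sigma$ in the $\theta$/$\nu$ slots rather than the $\eta$/$\mu$ slots). Two small corrections:
\begin{enumerate}
\item In the second form, the step you labor over is automatic. Since $(\eta+\mu)(\Sigma)=0\neq1=\kappa(\Sigma)$, every admissible $\kappa$ satisfies $\kappa\neq\eta+\mu$, so the fallback you mention is never needed.
\item In the first form, $\eta\neq0$ follows from $\eta(h_1)=1$, not from evaluation at $\Sigma$.
\end{enumerate}
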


\begin{proof}
Remove one occurrence of $\Sigma$.  The remaining three nonzero holes have sum
zero, so they are
\[
 a,\ b,\ a+b
\]
for linearly independent $a,b\in K$.  There are only two cases.

Suppose first that $\Sigma\notin\Span\{a,b\}$.  Order the holes as
\[
 p_1=a+b,\qquad p_2=a,\qquad q_1=b,\qquad q_2=\Sigma.
\]
Choose the coordinate characters on the independent triple $\Sigma,a,b$ so that
\[
 \kappa(\Sigma)=1,\quad
 \eta(a)=1,\quad
 \mu(b)=1,
\]
and each of $\kappa,\eta,\mu$ vanishes on the other two vectors.  Then the
three characters are independent, $\kappa(\Sigma)=1$, and
\[
 \eta(p_1)=1,\quad
 (\kappa+\eta)(p_2)=1,\quad
 \mu(q_1)=1,\quad
 (\kappa+\mu)(q_2)=1.
\]

It remains to consider $\Sigma\in\Span\{a,b\}$.  Relabel $a,b$ inside their
two-dimensional span so that $\Sigma=a+b$; the hole multiset is then
\[
 \{a,b,a+b,a+b\}.
\]
Choose $c\notin\Span\{a,b\}$ and let $\kappa,\eta,\lambda$ be the coordinate
characters dual to $a,b,c$, respectively.  Put
\[
 p_1=a+b,\qquad p_2=a,\qquad q_1=a+b,\qquad q_2=b,
 \qquad
 \mu=\eta+\lambda.
\]
Now $\kappa,\eta,\mu$ are independent and direct evaluation gives all five
conditions in \eqref{eq:character-choice-conditions}.
\end{proof}

\begin{lemma}[Nonaffine support orbit]\label{lem:nonaffine-filter-support}
Let $p_1,p_2,q_1,q_2\in K\setminus\{0\}$.  Let
$\kappa,\eta,\mu$ be independent and put
\[
    \theta=\kappa+\eta,
    \qquad
    \nu=\kappa+\mu.
\]
Suppose
\[
\langle\eta,p_1\rangle=1,
\quad
\langle\theta,p_2\rangle=1,
\quad
\langle\mu,q_1\rangle=1,
\quad
\langle\nu,q_2\rangle=1.
\]
Define
\[
F=\Omega_{\eta,p_1}\wedge\Omega_{\theta,p_2},
\qquad
G=\Omega_{\mu,q_1}\wedge\Omega_{\nu,q_2},
\]
and write
\[
F=\sum_{Q\in\binom{\whK}{4}} f_Qe_Q,
\qquad
G=\sum_{Q\in\binom{\whK}{4}} g_Qe_Q.
\]
Put
\[
Q_0=\{0,\eta,\mu,\kappa+\eta+\mu\}.
\]
Then $\operatorname{supp}(F)\cap\operatorname{supp}(G)$ is exactly the
translation orbit
\[
    \{Q_0+\xi:\xi\in\whK\}.
\]
The orbit has size $N$, and there is a nonzero constant $c$, independent of
$\xi$, such that
\begin{equation}\label{eq:nonaffine-filter-product}
    f_{Q_0+\xi}g_{Q_0+\xi}
    =
    c\eps_\xi(p_1+p_2+q_1+q_2).
\end{equation}
Moreover,
\[
    \sum_{\rho\in Q_0}\rho=\kappa.
\]
\end{lemma}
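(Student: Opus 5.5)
The plan is to identify the common support by hand from the edge-filter description, and then read off the coefficient phase from shift covariance. By Lemma~\ref{lem:edge-filter}, $\Omega_{\eta,p_1}$ is supported on the $\eta$-pairs $\{x,x+\eta\}$ and $\Omega_{\theta,p_2}$ on the $\theta$-pairs. Hence every $Q\in\operatorname{supp}(F)$ is a disjoint union of an $\eta$-pair and a $\theta$-pair. Similarly, every $Q\in\operatorname{supp}(G)$ is a disjoint union of a $\mu$-pair and a $\nu$-pair.

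\textbf{Step 1: $Q_0$ and its translates lie in the common support.} Direct inspection shows that $Q_0$ decomposes as
\[
\{0,\eta\}\cup\{\mu,\mu+\theta\}
\quad\text{and}\quad
\{0,\mu\}\cup\{\eta,\eta+\nu\}.
\]
Both decompositions are explicit once we note $\mu+\theta=\eta+\nu=\kappa+\eta+\mu$.

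\textbf{Step 2: the common support is exactly the orbit of $Q_0$.} Take any $Q=\{x,x+\eta\}\sqcup\{y,y+\theta\}$ that also splits as a $\mu$-pair plus a $\nu$-pair, and translate so that $x=0$. The element $0$ lies in a $\mu$-pair or a $\nu$-pair. In each of the few resulting cases, I would express the partner and the remaining points in the basis $\kappa,\eta,\mu$ (together with the free vector $y$). Independence of $\kappa,\eta,\mu$ should then force $y\in\{\mu,\mu+\theta\}$ up to relabeling, so that $Q$ is a translate of $Q_0$.

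\textbf{Step 3: uniqueness of the decompositions.} I must also check that a common-support set has a unique $\eta/\theta$ decomposition and a unique $\mu/\nu$ decomposition. Otherwise the coefficient would be a sum that could cancel. A second decomposition would make $Q$ invariant under a pair of the relevant differences, which makes $Q$ an affine plane. For example, two $\eta/\theta$ decompositions force $Q$ to be a coset of $\Span\{\eta,\theta\}=\Span\{\kappa,\eta\}$. Such a plane contains no $\mu$-pair, because $\mu\notin\Span\{\kappa,\eta\}$, so it is not in $\operatorname{supp}(G)$. The symmetric argument handles the other side. Consequently $f_Q$ and $g_Q$ are each a single product of two edge-filter coefficients. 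By Lemma~\ref{lem:edge-filter} each such coefficient has absolute value $2N$, so both are nonzero.

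\textbf{Step 4: orbit size.} We have $\sum_{\rho\in Q_0}\rho=\kappa\neq0$. If $Q_0$ had a nontrivial stabilizer element $\zeta$, it would be a union of two $\zeta$-pairs and its sum would be $0$. So the stabilizer is trivial and the orbit has size $N$.

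\textbf{Step 5: the phase.} Lemma~\ref{lem:shift-covariance} gives $S_\xi F=\eps_\xi(p_1+p_2)F$ and $S_\xi G=\eps_\xi(q_1+q_2)G$. Also $S_\xi e_{Q_0}=\pm e_{Q_0+\xi}$, with the same orientation sign for $F$ and $G$. Comparing coefficients therefore gives
\[
f_{Q_0+\xi}g_{Q_0+\xi}=\eps_\xi(p_1+p_2+q_1+q_2)\,f_{Q_0}g_{Q_0},
\]
which is \eqref{eq:nonaffine-filter-product} with $c=f_{Q_0}g_{Q_0}\neq0$.

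I expect the main obstacle to be the case analysis in Step 2 together with the uniqueness check in Step 3. That is where the independence of $\kappa,\eta,\mu$ must be used carefully to exclude stray coincidences.
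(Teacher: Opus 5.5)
Your proposal is correct and follows the paper's proof: you decompose each common support into $\eta/\theta$ and $\mu/\nu$ pairs, trace the four-cycle to translates of $Q_0$, get nonvanishing from uniqueness of the decompositions, get the trivial stabilizer from the nonzero sum $\kappa$, and get the phase from shift covariance. The case analysis you defer in Step 2 does close, since the cases $\{0,\mu\}\subseteq Q$ and $\{0,\nu\}\subseteq Q$ give $Q_0$ and $Q_0+\eta$, and your Step 3 is more explicit than the paper, which only asserts uniqueness. Step 3 also shortens: because $\eta\neq\theta$, a set admitting an $\eta/\theta$ decomposition contains exactly one $\eta$-pair (two $\eta$-pairs would leave an $\eta$-pair, not a $\theta$-pair, as complement), so the decomposition is automatically unique, and the same holds for $\mu/\nu$.
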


\begin{proof}
By Lemma~\ref{lem:edge-filter}, a common-support four-set has one
decomposition into an $\eta$-pair and a $\theta$-pair, and another into a
$\mu$-pair and a $\nu$-pair.  Since
\[
    \eta+\theta=\mu+\nu=\kappa
\]
and $\kappa,\eta,\mu$ are independent, tracing the resulting alternating
four-cycle shows that the set is precisely
\[
    Q_0+\xi
    =
    \{\xi,\xi+\eta,\xi+\mu,
      \xi+\kappa+\eta+\mu\}.
\]
Conversely, each such translate has the decompositions
\[
\{\xi,\xi+\eta\}
\sqcup
\{\xi+\mu,\xi+\mu+\theta\}
\]
and
\[
\{\xi,\xi+\mu\}
\sqcup
\{\xi+\eta,\xi+\eta+\nu\}.
\]
The decompositions are unique, so Lemma~\ref{lem:edge-filter} also shows
that both coefficients at $Q_0$ are nonzero.  Thus the displayed translates
are exactly the common support.

\[
    \sum_{\rho\in Q_0}\rho=\kappa\neq0.
\]
Hence $Q_0$ is nonaffine and has no nonzero translation stabilizer, so its
orbit has size $N$.  Finally put $c=f_{Q_0}g_{Q_0}\neq0$.
Lemma~\ref{lem:shift-covariance} multiplies the two coefficients under
translation by $\xi$ by $\eps_\xi(p_1+p_2)$ and
$\eps_\xi(q_1+q_2)$, respectively.  The common orientation sign occurs
twice and cancels in their product.  This gives
\eqref{eq:nonaffine-filter-product}.
\end{proof}

\begin{proposition}[Nonaffine boundary at the total-sum hole]
\label{prop:four-holes-nonzero-sum}
Assume $r\geq3$.  Let $h_1,h_2,h_3,h_4\in K\setminus\{0\}$ have nonzero
sum
\[
 \Sigma=h_1+h_2+h_3+h_4,
\]
assume that $\Sigma$ occurs among the four holes, and let $\mathbf m$ be any
profile of size $N-4$ satisfying $\sigma(\mathbf m)=\Sigma$.
Then, after a suitable partition of the holes into
\[
    p_1,p_2\mid q_1,q_2,
\]
there exist deleted vertex sets $A,B\subseteq K$, each of size
four, such that $A$ is the union of two disjoint edges of differences
$p_1,p_2$, $B$ is the union of two disjoint edges of differences $q_1,q_2$, and
\[
    [z^{\mathbf m}]\det C(z)_{A^c,B^c}\neq0.
\]
\end{proposition}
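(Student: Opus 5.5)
The plan is to combine Lemma~\ref{lem:nonzero-sum-character-choice}, Lemma~\ref{lem:nonaffine-filter-support}, Corollary~\ref{cor:nonaffine-four-deleted} and Proposition~\ref{prop:filtered-minor}.

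First, apply Lemma~\ref{lem:nonzero-sum-character-choice}; its hypotheses are $r\geq3$, $\Sigma\neq0$, and $\Sigma$ a hole. It gives an ordering $p_1,p_2,q_1,q_2$ of the holes and independent characters $\kappa,\eta,\mu$ with $\theta=\kappa+\eta$, $\nu=\kappa+\mu$ satisfying \eqref{eq:character-choice-conditions}. In particular the four edge filters
\[
F=\Omega_{\eta,p_1}\wedge\Omega_{\theta,p_2},\qquad
G=\Omega_{\mu,q_1}\wedge\Omega_{\nu,q_2}
\]
are legal. By construction, $F$ is a linear combination of endpoint wedges $w_a\wedge w_{a+p_1}\wedge w_b\wedge w_{b+p_2}$, and $G$ is the analogous combination for $q_1,q_2$. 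Proposition~\ref{prop:filtered-minor} therefore reduces the claim to showing $\Phi_{\mathbf m}(F,G)\neq0$.

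Second, use Lemma~\ref{lem:nonaffine-filter-support} to compute $\Phi_{\mathbf m}(F,G)$. Only the translates $Q_0+\xi$ contribute, with $f_{Q_0+\xi}g_{Q_0+\xi}=c\,\eps_\xi(p_1+p_2+q_1+q_2)=c\,\eps_\xi(\Sigma)$. Lemma~\ref{lem:character-translation} gives $b_{Q_0+\xi}(\mathbf m)=\eps_\xi(\sigma(\mathbf m))\,b_{Q_0}(\mathbf m)=\eps_\xi(\Sigma)\,b_{Q_0}(\mathbf m)$. The two phases cancel. Since the orbit has exactly $N$ distinct members, this yields
\[
\Phi_{\mathbf m}(F,G)=N\,c\,b_{Q_0}(\mathbf m).
\]

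Third, show $b_{Q_0}(\mathbf m)\neq0$. The lemma gives $\sum_{\rho\in Q_0}\rho=\kappa$, and $\langle\kappa,\sigma(\mathbf m)\rangle=\langle\kappa,\Sigma\rangle=1$ by \eqref{eq:character-choice-conditions}. Corollary~\ref{cor:nonaffine-four-deleted} then applies because $r\geq3$, so $b_{Q_0}(\mathbf m)\neq0$. Together with $c\neq0$ this gives $\Phi_{\mathbf m}(F,G)\neq0$.

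Proposition~\ref{prop:filtered-minor} now supplies locally legal sets $A,B$, realizing $p_1,p_2$ and $q_1,q_2$ respectively, with $[z^{\mathbf m}]\det C(z)_{A^c,B^c}\neq0$. No genuine obstacle remains at this point, since all the cancellation control was front-loaded into the support lemma. The only step needing care is the phase bookkeeping: the orientation signs must be common across the orbit, and this is exactly what \eqref{eq:nonaffine-filter-product} asserts. One must also check that the orbit has size $N$, without a stabilizer overcounting factor; this holds because $\kappa\neq0$ makes $Q_0$ nonaffine.
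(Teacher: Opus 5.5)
Your proposal is correct and matches the paper's proof essentially step for step. The steps are: the character choice of Lemma~\ref{lem:nonzero-sum-character-choice}; the single nonaffine translation orbit of Lemma~\ref{lem:nonaffine-filter-support}, whose phase cancels against Lemma~\ref{lem:character-translation} to give $\Phi_{\mathbf m}(F,G)=cN\,b_{Q_0}(\mathbf m)$; nonvanishing of $b_{Q_0}(\mathbf m)$ from Corollary~\ref{cor:nonaffine-four-deleted} via $\langle\kappa,\Sigma\rangle=1$; and finally Proposition~\ref{prop:filtered-minor}, with your remarks on the trivial stabilizer and the orientation signs being exactly what the support lemma already provides.
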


\begin{proof}
Choose $p_1,p_2,q_1,q_2$ and $\kappa,\eta,\mu$ by
Lemma~\ref{lem:nonzero-sum-character-choice}.  Put
\[
    \theta=\kappa+\eta,
    \qquad
    \nu=\kappa+\mu,
\]
and let $F,G,Q_0$ be as in Lemma~\ref{lem:nonaffine-filter-support}.  That
lemma gives
\[
\Phi_{\mathbf m}(F,G)
=
c\sum_{\xi\in\whK}
\eps_\xi(\sigma(\mathbf m))b_{Q_0+\xi}(\mathbf m)
\]
for some $c\neq0$.  By Lemma~\ref{lem:character-translation},
\[
    b_{Q_0+\xi}(\mathbf m)
    =
    \eps_\xi(\sigma(\mathbf m))b_{Q_0}(\mathbf m).
\]
Hence
\[
\Phi_{\mathbf m}(F,G)
=
cN b_{Q_0}(\mathbf m).
\]
Now
\[
    \sum_{\rho\in Q_0}\rho=\kappa
\]
and
\[
    \langle\kappa,\sigma(\mathbf m)\rangle=1.
\]
Corollary~\ref{cor:nonaffine-four-deleted} therefore gives
\[
    b_{Q_0}(\mathbf m)\neq0.
\]
Thus the filtered sum is nonzero, and
Proposition~\ref{prop:filtered-minor} applies.
\end{proof}

The two branches now prove the single proposition announced at the beginning
of the section.

\begin{proof}[Proof of Proposition~\ref{prop:four-hole-boundary}]
Put $\Sigma=\sigma(\mathbf m)$.  If $\Sigma$ is not one of the holes, apply
Proposition~\ref{prop:four-holes-affine}.  If $\Sigma$ is one of the holes,
apply Proposition~\ref{prop:four-holes-nonzero-sum}.  These alternatives
exhaust the possibilities, and each gives the split, the two legal endpoint
sets, and the required nonzero complementary-minor coefficient.
\end{proof}

\begin{proof}[Proof of Theorem~\ref{thm:main}\textup{(i)}]
If $s=3$, then $K=H\cong\F_2^2$ and all four requests are internal.
Admissibility and the rank-two argument in
Lemma~\ref{lem:zero-sum-character-choice} show that their multiset is
$\{p,p,p,p\}$ or $\{p,p,q,q\}$.  Split it into two pairs of equal
directions.  For every nonzero $x\in K$, the two cosets of
$\Span\{x\}$ are two disjoint $x$-edges and hence realize the pair $x,x$.
Using one such perfect matching in each affine half gives the required
solution.

Assume $s\geq4$, put
\[
 K=H\cong\F_2^r,
 \qquad
 r=s-1\geq3,
\]
and write the four internal requests as
$h_1,h_2,h_3,h_4\in K\setminus\{0\}$.  Write every crossing request as
$\alpha+d_i$ and let $\mathbf m$ be the profile of the $d_i$.  Since the
number of crossing requests is even, admissibility gives
\[
 \sigma(\mathbf m)=h_1+h_2+h_3+h_4.
\]
Proposition~\ref{prop:four-hole-boundary} supplies the split and endpoint sets
$A,B$.  The nonzero determinant coefficient gives the residual bijection by
\eqref{eq:minor-certificate}, and Lemma~\ref{lem:boundary-assembly} adds the
four internal edges.  This is the required solution.
\end{proof}

\begin{corollary}[Exactly four even-weight requests]\label{cor:four-even}
Every admissible instance with exactly four even-weight requests has a
solution.
\end{corollary}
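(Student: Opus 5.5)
This corollary is Theorem~\ref{thm:main}\textup{(i)} specialized to the parity hyperplane $H=H_{\rm even}$. The plan is to check that its hypotheses hold and then cite the theorem.

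First, confirm that $H_{\rm even}$ is a hyperplane of $\F_2^s$ when $s\geq2$. The map $x\mapsto\wt(x)\bmod 2$ is the nonzero linear functional $x\mapsto\sum_i x_i$, and $H_{\rm even}$ is its kernel. Under this identification an internal request (a request in $H$) is exactly an even-weight request. So an admissible instance with exactly four even-weight requests, counted with multiplicity, is precisely an admissible four-hole instance relative to $H_{\rm even}$.

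Second, handle the dimension restriction $s\geq3$ in Theorem~\ref{thm:main}\textup{(i)}. The instance has $N=2^{s-1}$ requests and four of them are internal, so $N\geq4$ and hence $s\geq3$ automatically. (For $s=2$ there are only two requests, so the hypothesis cannot hold.) With this in hand, Theorem~\ref{thm:main}\textup{(i)} applies directly and yields a solution.

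I do not expect a real obstacle here. The only points that need care are the linearity of the parity functional and the implicit lower bound on $s$. All the substantive work already sits in Proposition~\ref{prop:four-hole-boundary} and the proof of Theorem~\ref{thm:main}\textup{(i)}, which together cover both the case $s=3$ and the case $s\geq4$.
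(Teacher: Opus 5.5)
Your proposal is correct and takes the same route as the paper, which simply applies Theorem~\ref{thm:main}\textup{(i)} to the parity hyperplane $H_{\rm even}$. Your explicit check that four internal requests force $N=2^{s-1}\geq4$, and hence $s\geq3$, supplies a detail the paper leaves implicit.
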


\begin{proof}
Apply Theorem~\ref{thm:main}\textup{(i)} to the parity hyperplane
$H_{\rm even}$.
\end{proof}

The reason four holes require the fourth compound is structural.  For a
linear map $M$, its $h$th compound is the induced map on
$\bigwedge^h$; in exterior bases its entries are the $h\times h$ minors of
$M$.  Thus ``second'' and ``fourth'' refer to the exterior degrees $2$ and
$4$, not to powers of the Walsh matrix.
For two holes, deleting one internal edge on each side leads to
$(N-2)\times(N-2)$ complementary minors and the second compound of the
Walsh matrix.  Four holes require two internal edges on each side, hence
four deleted vertices on each side and the fourth compound.  The affine
and nonaffine four-character configurations are genuinely different: an
affine four-set has a two-dimensional translation stabilizer, whereas a
nonaffine four-set has trivial translation stabilizer and nonzero character
sum.  The two $2$-adic lemmas above reflect precisely this dichotomy.

The four-hole proof closes because its filters reduce the boundary search to
two possible geometries for a deleted four-character set: affine and
nonaffine.  The zero-sum eight-hole case still lies within the uniform
affine-support criterion and is treated next.  The genuinely new obstruction
occurs when the total sum is nonzero and is itself one of the holes:
quotienting by its span then creates a zero hole, so the affine reduction is
no longer legal.  Section~\ref{sec:eight-nonzero} handles this obstruction by
an invariant $8\to4$ contraction, in which a translation-invariant
eight-character support becomes a four-character support in the quotient.

\section{Eight holes with zero total sum}\label{sec:eight-zero}

Four internal edges must now be placed in each affine half.  Their directions
are prescribed, but neither the four--four split of the holes nor the endpoint
sets are known in advance.  If the holes are $h_1,\ldots,h_8$ and $\mathbf m$
is the reduced crossing profile, then admissibility gives
\[
 \sigma(\mathbf m)=h_1+\cdots+h_8.
\]
We call any profile $\mathbf m$ of size $N-8$ satisfying this identity
\emph{compatible} with the eight holes.
By Proposition~\ref{prop:filtered-minor}, the main route is to choose a split
\[
 p_1,p_2,p_3,p_4\mid q_1,q_2,q_3,q_4
\]
and legal filters $F,G$ for which
\[
 \Phi_{\mathbf m}(F,G)\neq0.
\]
The determinant--Fourier dictionary and boundary assembly were established
in Section~\ref{sec:framework}, while
Proposition~\ref{prop:filtered-minor} supplies the filtered boundary
reduction.

\begin{theorem}[Zero-sum eight-hole theorem]\label{thm:eight-hole-zero-sum}
Let $s\geq4$, let $H\leq\F_2^s$ be a hyperplane, and let $\cD$ be an
admissible request instance.  Suppose exactly eight members of $\cD$, counted
with multiplicity, lie in $H$.  If the sum of these eight holes is zero, then
$\cD$ has a solution.
\end{theorem}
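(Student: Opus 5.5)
The plan is to deduce the theorem from Proposition~\ref{prop:uniform-affine-filter} with $j=4$, in exactly the way Proposition~\ref{prop:four-holes-affine} handled four holes. Write the holes as $h_1,\ldots,h_8\in K\setminus\{0\}$, with $K=H\cong\F_2^r$ and $r=s-1$, and let $\mathbf m$ be the reduced crossing profile. The number of crossing requests is $N-8$, which is even, so admissibility gives
\[
 \Sigma=\sigma(\mathbf m)=h_1+\cdots+h_8=0 .
\]
Hence $\Sigma^\perp=\whK$, and the two conditions $\eta,\theta\in\Sigma^\perp$ in the uniform criterion are vacuous. Moreover, for any split with left sum $d=p_1+\cdots+p_4$,
\[
 \dim(\Sigma^\perp\cap d^\perp)=\dim d^\perp\geq r-1 .
\]
This is at least $3$ once $r\geq4$, so the extra hypothesis for $j=4$ is automatic. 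Thus for $s\geq5$ everything reduces to one purely combinatorial statement, an eight-hole analogue of Lemma~\ref{lem:zero-sum-character-choice}. Given the proposition, Proposition~\ref{prop:filtered-minor}, \eqref{eq:minor-certificate} and Lemma~\ref{lem:boundary-assembly} finish the proof exactly as in the proof of Theorem~\ref{thm:main}\textup{(i)}.

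\emph{Character-choice step.} I must order the holes as $p_1,\ldots,p_4\mid q_1,\ldots,q_4$ and find distinct characters $\eta,\theta$ with $\eta(p_i)=1$ and $\theta(q_i)=1$ for all $i$. The affine system $\chi(x_i)=1$ is solvable exactly when no odd-size submultiset of the $x_i$ sums to zero. When solvable, its solution set is a coset of the annihilator of $\Span\{x_i\}$, so it has size $2^{r-\mathrm{rank}}$.

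I would first find a good $\eta$ by averaging. Each nonzero $h_i$ takes value one on $2^{r-1}$ characters. Hence the average number of holes with value one, over the $2^r-1$ nonzero characters, is
\[
 \frac{8\cdot2^{r-1}}{2^r-1}>4 .
\]
Since $\sum h_i=0$, this count is even for every character, so some $\eta$ takes value one on at least six holes. I take four of those as the $p_i$. I then try to choose which four form the $q$-side so that the $q$'s contain no odd zero-sum submultiset. Freedom comes from swapping holes between the sides, and from the fact that $\eta(q)=1$ for the one or two remaining $\eta$-odd holes.

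Distinctness of $\eta$ and $\theta$ is a problem only when both solution sets are singletons. That forces the $p$'s and the $q$'s each to have rank $r$, which is possible only for $r=4$. In that case I would rechoose the split or the character.

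\emph{Main obstacle.} I expect the character-choice step to be the hardest part, specifically the low-rank configurations. These arise when the holes span a space of rank at most $3$, or contain many repeated values, so that odd dependencies such as $a,b,a+b$ are unavoidable on one side. I would organize this step by the rank of $\Span\{h_i\}$, in the spirit of Lemma~\ref{lem:zero-sum-character-choice}. The guiding heuristic is to put equal directions together. Equal pairs $x,x$ never create odd dependencies, and a triple $a,b,a+b$ can be neutralized by placing it with a fourth vector outside its span. Only a few residual configurations should remain, and I would check these by hand.

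\emph{The case $s=4$.} Here $r=3$, all eight requests are internal, and the uniform criterion is unavailable because $j=4$ requires $r\geq4$. I must partition the eight zero-sum directions into two quadruples, each realized by a perfect matching of $\F_2^3$, which is a finite problem. I would argue directly: pair equal directions, using the two cosets of $\Span\{x\}$ to realize $x,x$. For four distinct directions, use a realization of $\{a,b,c,a+b+c\}$ by a perfect matching of $\F_2^3$, or at worst invoke Theorem~\ref{thm:hall-stated} inside a coset structure. This case should reduce to a short enumeration of multiset types.
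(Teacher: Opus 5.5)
Your reduction for $s\geq5$ is the same as the paper's. With $\Sigma=0$ the conditions $\eta,\theta\in\Sigma^\perp$ are vacuous and $\dim d^\perp\geq r-1\geq3$, so Proposition~\ref{prop:uniform-affine-filter} with $j=4$ applies once a four--four split with distinct characters is found.

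That combinatorial statement, which is the paper's Lemma~\ref{lem:four-four-splitting}, is the real content of the theorem, and your proposal does not prove it. After the averaging step you only say that you would \emph{try} to choose the $q$-side, would \emph{rechoose} at $r=4$, and would check residual configurations by hand. Your forecast of the difficulty (low rank, repeated values) is also not where it actually lies. The paper proves the lemma as follows. When no labeled four-subset has zero sum, there are at most four zero-sum labeled triples, so at most $40$ of the $70$ splits have a zero-sum triple on some side. The full-rank case $r=4$ is then repaired by moving two equal copies, or an affine plane $a+L$ of $\{\lambda=1\}$, to one side.

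Your averaging start can in fact be completed, more briefly than the paper does it. Let $c(\eta)$ be the number of holes with $\eta(h_i)=1$; your count gives a nonzero $\eta$ with $c(\eta)\in\{6,8\}$. (After four $\eta$-one holes go left, two or four remain, not ``one or two''.)
\begin{itemize}
\item If $c(\eta)=6$ and $z_1,z_2$ are the $\eta$-zero holes, any three of the six $\eta$-one holes contain a pair $x,y$ with $x+y\notin\{z_1,z_2\}$. This is because the three pairwise sums add to $0$, whereas any three elements of $\{z_1,z_2\}$ add to a nonzero vector. Then $\{x,y,z_1,z_2\}$ has no zero-sum triple, and every $\theta$ for it differs from $\eta$ because $\eta(z_1)=0$.
\item If $c(\eta)=8$, $\eta$ works on both sides of every split. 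Distinctness can fail only when $r=4$ and both sides have full rank, which is repaired as in the paper.
\end{itemize}
None of this is in the proposal, however, so as written the key step is missing.

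The case $s=4$ is likewise left open. You need two facts, and neither is stated or proved:
\begin{itemize}
\item A four-multiset of nonzero vectors in $\F_2^3$ is the difference multiset of a perfect matching of $\F_2^3$ exactly when it has zero sum. The zero-sum types are $\{a,a,a,a\}$, $\{a,a,b,b\}$, and $\{a,b,c,a+b+c\}$ with $a,b,c$ independent.
\item The eight holes can always be split into two zero-sum quadruples.
\end{itemize}
``Pair equal directions'' and an appeal to Theorem~\ref{thm:hall-stated} are heuristics, not an argument that such a split exists. The paper obtains the split by decomposing the eight labels into minimal zero-sum blocks of sizes $2,3,4$. In the $2+3+3$ pattern, the two $3$-blocks are the nonzero points of two planes meeting in a nonzero point $c$, and the four labels other than $c$ sum to zero. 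It obtains the matchings from the nonzero top coefficient of $\bigwedge_i\Omega_{\eta,p_i}$ given by Lemma~\ref{lem:affine-filter-walsh} with $U=\whK$.
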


When the total sum is zero, compatibility says
$\sigma(\mathbf m)=0$.  The common proof map therefore reduces the entire
boundary problem to one combinatorial choice: split the eight holes into two
groups of four and find distinct characters taking value one on the two
groups.  For $r\geq4$, Proposition~\ref{prop:uniform-affine-filter} then
forces an affine eight-character support and completes the filtered-minor
step.  At $r=3$ the same edge filters are used in top exterior degree, as
shown in the proof.  The following lemma makes exactly the needed choice.  The
resulting support is an affine three-flat, meaning a coset of a
three-dimensional character subspace.

\begin{lemma}[Four--four splitting]\label{lem:four-four-splitting}
Let $K\cong\F_2^r$ with $r\geq3$, and let
\[
    h_1,\ldots,h_8\in K\setminus\{0\}
\]
be labeled vectors satisfying
\[
    h_1+\cdots+h_8=0.
\]
Then the eight labeled vectors can be partitioned into
\[
    P=(p_1,p_2,p_3,p_4),
    \qquad
    R=(q_1,q_2,q_3,q_4),
\]
and there exist distinct characters $\eta,\theta\in\whK$ such that
\[
    \langle\eta,p_i\rangle=1
    \qquad(i=1,2,3,4)
\]
and
\[
    \langle\theta,q_i\rangle=1
    \qquad(i=1,2,3,4).
\]
If $r=3$, the split may moreover be chosen so that
\[
 p_1+p_2+p_3+p_4=q_1+q_2+q_3+q_4=0.
\]
\end{lemma}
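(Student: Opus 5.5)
The plan is to reduce everything to a consistency criterion for affine character systems. For a finite family $S\subseteq K\setminus\{0\}$, the system $\langle\chi,p\rangle=1$ for $p\in S$ is solvable in $\whK$ if and only if every linear dependency $\sum_{i\in I}p_i=0$ among members of $S$ has $|I|$ even. When it is solvable, its solution set is a coset of $\Span(S)^\perp$ of size $2^{r-\operatorname{rank}S}$. For a four-element family the only odd dependencies have size one or three. Size one is excluded because the holes are nonzero. So a group $\{p_1,\dots,p_4\}$ admits a legal character exactly when it contains no zero-sum triple, that is, no three members of the form $a,b,a+b$. The lemma therefore splits into two tasks: (a) find a four--four split in which neither group contains such a triple (with both group sums zero when $r=3$); (b) arrange that the two resulting characters can be taken distinct.

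For task (a) with $r\geq4$, I would first pair up equal vectors as far as possible, since a pair $\{a,a\}$ never lies in a zero-sum triple. Let $O$ be the set of distinct values of odd multiplicity. Then $\sum O=0$, and $|O|$ is even. I would distribute the repeated pairs and the elements of $O$ greedily between the two groups, breaking any triple $a,b,a+b$ by swapping one member with an element of the other group. A triple in one group after every admissible swap would force a rigid configuration, for instance all eight vectors inside a small subspace. Such configurations are few, and I expect to treat them directly. For $r=3$ the ambient set is only the seven nonzero vectors of $\F_2^3$, so the requirement is a split into two zero-sum, triple-free quadruples. Those quadruples are exactly $\{a,a,b,b\}$, $\{a,a,a,a\}$, and $\{a,b,c,a+b+c\}$ with $a,b,c$ independent. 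Here I would do a case analysis on the odd-multiplicity set $O$, with $|O|\in\{0,2,4,6\}$ and $\sum O=0$. Multiplicities coming in pairs give $\{a,a,b,b\}$-type groups. When $|O|=4$, $O$ is either a basis-plus-sum quadruple or a plane minus zero, $\{a,b,a+b\}$, together with one further element. In the latter case $O$ has to be regrouped with a repeated value, for example $\{a,a+b,\ldots\}$ combined with copies. If the case analysis becomes unwieldy, I would fall back on noting that this is a finite check over multisets of size eight from seven vectors, up to $GL_3(\F_2)$, which could be verified exhaustively.

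For task (b), the solution sets have sizes $2^{r-\operatorname{rank}P}$ and $2^{r-\operatorname{rank}R}$. If either set has at least two elements, we choose $\eta$ first and then pick $\theta\neq\eta$. Otherwise both ranks equal $r$, which forces $r=4$ and $P$, $R$ each a basis. In that case $\eta=\theta$ would mean that one character equals $1$ on all eight holes. I would then swap one element between $P$ and $R$ so as to change a rank or create a dependency of even size, and recheck triple-freeness. For $r=3$, a zero-sum group has rank at most $3$ with the dependency of size four, so its solution set has at least two elements and distinctness is automatic. I expect the main obstacle to be the $r=3$ zero-sum split, particularly the configurations in which $O$ contains a plane $\{a,b,a+b\}$. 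In those cases naive grouping produces a forbidden triple, and the repeated values must be used to separate it.
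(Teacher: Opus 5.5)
Your reduction is sound: a quadruple of nonzero holes has a legal character exactly when it contains no zero-sum triple, which is the paper's notion of a \emph{good} four-subset. The gap is in the step that carries the lemma for $r\geq4$, namely that the eight labels split into two good four-subsets. Your plan (``distribute greedily, swap to break triples, and treat the rigid configurations directly'') is not yet an argument. Nothing shows that a swap destroying a triple on one side does not create one on the other, and the configurations in which every swap fails are never identified.

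The paper closes this step with a counting bound. If some labeled four-subset has zero sum, it and its complement are automatically good. Otherwise no two distinct values each occur twice and no value occurs four times, so at most one value $a$ is repeated. The pairs $\{b,c\}$ with $b+c=a$ are disjoint, and the zero-sum triples avoiding $a$ are disjoint from each other and from those pairs, by symmetric-difference arguments. The total-sum-zero condition then excludes the extreme cases, leaving at most $4$ zero-sum labeled triples. Each triple lies in $5$ of the $70$ four-subsets and in the complement of $5$ more. Hence at most $40$ splits are bad, and a good split exists.

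Second, your $r=3$ distinctness claim is false. A zero-sum quadruple $\{a,b,c,a+b+c\}$ with $a,b,c$ independent spans $\F_2^3$, so it has exactly one legal character. For the holes $a,a,b,b,c,c,a+b+c,a+b+c$, putting one copy of each value on each side gives a zero-sum split in which $\eta=\theta$ is forced. So distinctness depends on the choice of split. The paper handles this by re-splitting into groups $\{x,x,x,x\}$ or $\{x,x,y,y\}$, which have rank at most two.

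Your odd-multiplicity route repairs this cheaply, and it shows that $r=3$ is not the real obstacle. In $\F_2^3$ the cases $|O|=2$ and $|O|=6$ are impossible. So is your ``$\{a,b,a+b\}$ plus one element'' case, since its sum is that extra element. Hence $O$ is empty or an affine quadruple $\{a,b,c,a+b+c\}$. Taking $P=O$ leaves $R$ with only even multiplicities, so $R$ has rank at most two and at least two legal characters.

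Your $r=4$ swap for distinctness is only sketched, but it can be filled in. Once one $\lambda$ equals $1$ on all eight holes, every four-subset is automatically good. It then suffices to pick a four-subset of rank at most three: two copies of a repeated value, or a $2$-flat inside $\{\lambda=1\}$ when all eight values are distinct.
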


\begin{proof}
Suppose first that $r=3$.  Partition the eight labels into inclusion-minimal
nonempty zero-sum blocks.  Such a block has size $2$, $3$, or $4$, because a
minimal dependence in a three-dimensional binary space has at most four
terms.  If a block has size four, take it as $P$.  Otherwise the block sizes
are either $2+2+2+2$ or $2+3+3$.  In the first case, the union of two
$2$-blocks is a zero-sum four-subset.  In the second, each $3$-block is the
set of nonzero points of a two-dimensional subspace.  The two subspaces meet
in a nonzero point $c$; in each block the other two labels have sum $c$, so
those four labels have sum zero.  In every case we obtain a labeled
four-subset $P$ with sum zero, and its complement $R$ also has sum zero.

For either side, the equations $\lambda(x)=1$ are consistent.  Indeed, an
odd dependence among four nonzero vectors of total sum zero would have one or
three terms; one term is impossible, and three terms would force the fourth
vector to be zero.  Let $\mathcal A_P$ and $\mathcal A_R$ be the two nonempty
solution sets.  If they contain distinct elements, we are done.  Otherwise
both are the same singleton $\{\lambda\}$.  Then all eight vectors belong to
the four-point affine hyperplane
$A=\{x\in K:\lambda(x)=1\}$.  The points of $A$ span $K$ and their only
linear relation is that their total sum is zero.  Hence the parity vector of
the four multiplicities is either all even or all odd.  If all are even,
either one value occurs at least four times or two values occur at least
twice.  If all are odd, the multiplicities are $5,1,1,1$ or $3,3,1,1$ up to
order.  Thus in every case there is a zero-sum labeled four-subset $P'$ of
the form $\{a,a,a,a\}$ or $\{a,a,b,b\}$.  Its span has dimension at most
two, so $|\mathcal A_{P'}|\geq2$; its complement $R'$ again has sum zero and
$\mathcal A_{R'}\neq\varnothing$.  Choose
$\theta\in\mathcal A_{R'}$ and then
$\eta\in\mathcal A_{P'}\setminus\{\theta\}$.  This proves the claim for
$r=3$.

Assume henceforth that $r\geq4$.

Call a labeled four-subset \emph{good} if it contains no zero-sum labeled
triple.  We first show that the eight labels can be split into two good
four-subsets.

If some labeled four-subset has zero sum, take it as $P$ and its complement
as $R$.  Both have zero sum.  A zero-sum four-subset of nonzero vectors
cannot contain a zero-sum triple, since the remaining vector would then be
zero.  Thus both sides are good.

Assume therefore that no labeled four-subset has zero sum.  Let $\mathcal T$
be the family of zero-sum labeled triples.  We claim that
\[
    |\mathcal T|\leq4.
\]
Indeed, no two distinct values can both occur at least twice, since two
copies of each would form a zero-sum four-subset.  Also no value can occur
four times.  Let $a$ be the unique repeated value if one exists; otherwise
choose any occurring value $a$.  Let its multiplicity be $m\in\{1,2,3\}$.

Every zero-sum triple containing a copy of $a$ has the form
\[
    a,b,c,
    \qquad
    b+c=a.
\]
The pairs $\{b,c\}$ occurring here are pairwise disjoint.  Let $p$ be their
number.  Each such pair gives exactly $m$ labeled zero-sum triples, one for
each labeled copy of $a$.

Let $q$ be the number of zero-sum triples avoiding $a$.  Such triples are
pairwise disjoint and are disjoint from all the $2p$ vertices occurring in
the pairs above.  Indeed, if two distinct zero-sum triples shared exactly
one label, their symmetric difference would be a zero-sum four-subset.  If
they shared two labels, their third values would be equal, contradicting the
choice of $a$ as the only possible repeated value.  The same symmetric-
difference argument shows that a zero-sum triple avoiding $a$ cannot meet
one of the pairs $\{b,c\}$ in exactly one label, while meeting it in both
would force its third value to be $a$.

Hence
\[
    2p+3q\leq8-m
\]
and
\[
    |\mathcal T|=mp+q.
\]
If $m=1$, the first inequality immediately gives $p+q\leq3$.  If $m=2$,
the only way to have $2p+q>4$ is $p=3$ and $q=0$.  In that case the six
non-$a$ labels split into three pairs of sum $a$, so the total sum of all
eight labels is
\[
    2a+3a=a\neq0,
\]
a contradiction.  If $m=3$, the only way to have $3p+q>4$ is $p=2$ and
$q=0$.  Then the four labels in the two pairs have total sum zero, leaving
one further non-$a$ label $x$.  The total sum is
\[
    3a+x=a+x,
\]
so $x=a$, contradicting that the multiplicity of $a$ is exactly three.
Thus $|\mathcal T|\leq4$ in all cases.

There are $\binom84=70$ labeled four-subsets.  A fixed zero-sum triple is
contained in exactly five four-subsets, and is contained in the complement
of exactly five four-subsets.  Therefore at most
\[
    10|\mathcal T|\leq40
\]
four-subsets are bad on one side or on the other.  Since $70>40$, there is
a four-subset $P$ such that both $P$ and its complement $R$ are good.

For a good labeled four-subset, list its entries as
$P=(p_1,p_2,p_3,p_4)$.  The linear system
\[
    \lambda(p_i)=1
    \qquad(i=1,2,3,4)
\]
is consistent.  Indeed, a consistency obstruction is precisely an odd
linear dependence among the $p_i$.  Since the vectors are nonzero, an odd
dependence among four vectors has size three, which would be a zero-sum
triple.  Thus there exists a character taking value one on all of $P$, and
similarly for $R$.

It remains to make the two characters distinct.  Let
\[
    \mathcal A_P
    =\{\lambda\in\whK:\lambda(p)=1\text{ for all }p\in P\}
\]
and define $\mathcal A_R$ analogously.  If we cannot choose distinct
members of $\mathcal A_P$ and $\mathcal A_R$, then both sets are the same
singleton $\{\lambda\}$.  Hence both $P$ and $R$ span $K$, so $r\leq4$.
Since $r\geq4$, we have $r=4$.  Moreover,
\[
    \lambda(h_i)=1
    \qquad(i=1,\ldots,8).
\]
Consequently every four-subset is good.

If some vector value is repeated, choose a new four-subset $P'$ containing
two labeled copies of that value.  Then $\dim\Span(P')\leq3$, so
$|\mathcal A_{P'}|\geq2$.  Since $\lambda\in\mathcal A_{(P')^c}$, choose
$\eta\in\mathcal A_{P'}\setminus\{\lambda\}$ and
$\theta=\lambda$.

If all eight values are distinct, they are exactly the eight points of the
affine hyperplane
\[
    \{x\in K:\lambda(x)=1\}.
\]
Choose a two-dimensional subspace $L\leq\ker\lambda$ and a point
$a$ with $\lambda(a)=1$, and put
\[
    P'=a+L.
\]
Then $P'$ has four elements and spans a space of dimension at most three,
so $|\mathcal A_{P'}|\geq2$.  Again take
$\eta\in\mathcal A_{P'}\setminus\{\lambda\}$ and
$\theta=\lambda$ on the complement.  This completes the proof.
\end{proof}

\begin{proof}[Proof of Theorem~\ref{thm:eight-hole-zero-sum}]
Put $K=H$ and retain $\alpha\in V\setminus K$.
If $s=4$, then $K\cong\F_2^3$, $|K|=8$, and there are no crossing
requests.  Apply Lemma~\ref{lem:four-four-splitting} in its rank-three form.
It gives a split $P=(p_1,\ldots,p_4)$, $R=(q_1,\ldots,q_4)$ with zero sum on
each side and characters $\eta,\theta$ taking value one on their respective
four directions.  Put
\[
 F=\bigwedge_{i=1}^4\Omega_{\eta,p_i}.
\]
Choose a two-dimensional complement $E$ of $\Span\{\eta\}$ in $\whK$.
Lemma~\ref{lem:affine-filter-walsh}, with $j=4$ and
$U=\Span\{\eta\}\oplus E=\whK$, shows that the coefficient of
$e_{\whK}$ in $F$ is nonzero, because $\sum_i p_i=0$.  Hence some endpoint
wedge occurring in $F$ is nonzero.  The Walsh columns form a basis, so that
wedge has eight distinct endpoints and gives a perfect matching of $K$ with
differences $p_1,\ldots,p_4$.  The same argument with $\theta$ and $R$
gives a perfect matching of the second copy of $K$.  Translate that matching
to $\alpha+K$; together the two matchings solve the instance.

Assume now that $s\geq5$.  Write
\[
 |K|=N=2^{s-1},\qquad r=s-1\geq4.
\]
Write the eight holes as $h_1,\ldots,h_8\in K\setminus\{0\}$ with total
sum zero, and write the remaining requests uniquely as
$\alpha+d_i$, $d_i\in K$, $1\leq i\leq N-8$.  Let $\mathbf m$ be their
profile.  Admissibility gives $\sigma(\mathbf m)=0$.

Apply Lemma~\ref{lem:four-four-splitting}.  We obtain a split
$P=(p_1,\ldots,p_4)$, $R=(q_1,\ldots,q_4)$ and distinct characters
$\eta,\theta$ taking value one on their corresponding four holes.  Put
\[
 d=p_1+p_2+p_3+p_4=q_1+q_2+q_3+q_4.
\]
Then $\eta(d)=\theta(d)=0$ and
$\dim d^\perp\geq r-1\geq3$.  Proposition
~\ref{prop:uniform-affine-filter}, with $j=4$ and
$\Sigma=\sigma(\mathbf m)=0$, supplies two eight-element deleted sets $A,B$ and
a nonzero coefficient
\[
 [z^{\mathbf m}]\det C(z)_{A^c,B^c}\neq0.
\]
By~\eqref{eq:minor-certificate} this supplies the residual crossing
bijection, and Lemma~\ref{lem:boundary-assembly} completes the solution.
\end{proof}

This completes the zero-sum case.  When the total hole sum is nonzero, the
same affine criterion still works if that sum is not a hole, after quotienting
by its span.  The next section begins with that case and then treats the
genuine obstruction in which the total sum is itself a hole.

\section{Eight holes with nonzero total sum}\label{sec:eight-nonzero}

Let the holes be $h_1,\ldots,h_8$, let $\mathbf m$ be a compatible crossing
profile of size $N-8$, and put
\[
 \Sigma:=\sigma(\mathbf m)=h_1+\cdots+h_8\neq0.
\]
This section proves the complementary eight-hole case.

\begin{theorem}[Nonzero-sum eight-hole theorem]
\label{thm:eight-hole-nonzero-sum}
Let $s\geq4$, let $H\leq\F_2^s$ be a hyperplane, and let $\cD$ be an
admissible request instance.  Suppose exactly eight members of $\cD$, counted
with multiplicity, lie in $H$, and that their sum is nonzero.  Then $\cD$ has
a solution.
\end{theorem}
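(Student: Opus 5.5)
We split according to whether $\Sigma$ is one of the eight holes, as in the four-hole section. We put $K=H\cong\F_2^r$ with $r=s-1$, write the crossing requests as $\alpha+d_i$, and let $\mathbf m$ be their profile, so that $\sigma(\mathbf m)=\Sigma\neq0$. When $s=4$ there are no crossing requests, so $\mathbf m$ is empty and $\sigma(\mathbf m)=0$. Hence the nonzero-sum hypothesis cannot occur for $s=4$, and we may assume $r\geq4$.

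\emph{Case $\Sigma\notin\{h_1,\ldots,h_8\}$.} We pass to $\overline K=K/\Span\{\Sigma\}$, which has rank $r-1\geq3$. There the eight images are nonzero and sum to zero. If $r-1\geq4$, Lemma~\ref{lem:four-four-splitting} applies in $\overline K$ and gives a four--four split together with distinct quotient characters $\eta,\theta$ taking the value one on the two sides. If $r-1=3$, we use its rank-three form, which also makes each side sum to zero. Lifting to $K$ gives $\eta,\theta\in\Sigma^\perp$, so the legality hypotheses of Proposition~\ref{prop:uniform-affine-filter} with $j=4$ hold. It then remains to check $\dim(\Sigma^\perp\cap d^\perp)\geq3$, where $d=p_1+\cdots+p_4$. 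Since $\Sigma^\perp\cap d^\perp$ has codimension at most $2$ in $\whK$, its dimension is at least $r-2\geq3$ once $r\geq5$. For $r=4$ we need $d\in\Span\{\Sigma\}$; the rank-three form of the splitting gives $\bar d=0$ in $\overline K$, which is exactly this. Proposition~\ref{prop:uniform-affine-filter} and Proposition~\ref{prop:filtered-minor} then finish this case, and Lemma~\ref{lem:boundary-assembly} assembles the solution.

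\emph{Case $\Sigma$ is a hole.} Here the affine method fails for the same reason as in the four-hole nonaffine branch: the image of $\Sigma$ in the quotient is zero. We adapt the $8\to4$ contraction announced earlier. We remove one occurrence of $\Sigma$, which leaves seven holes summing to zero. We choose a split so that the left filters use characters $\eta_i$ and the right filters use $\theta_i$, all congruent to fixed characters modulo a common two-dimensional subspace $T$ of $\Sigma^\perp$. Then every common-support eight-set is $T$-invariant and projects to a four-set in $\whK/T$. Using Lemma~\ref{lem:shift-covariance} and Lemma~\ref{lem:character-translation}, the translation phases cancel as in Proposition~\ref{prop:four-holes-nonzero-sum}. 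This reduces $\Phi_{\mathbf m}(F,G)$ to a sum over translation orbits of quotient four-sets, and we aim to make it a single orbit with nonzero character sum. Nonvanishing would then follow from Theorem~\ref{thm:nonzero-sum-power-two-deletion} with $\ell=3$ and valuation deficit $\ell-1=2$, provided the common-support sets $Q$ satisfy $\langle\sum_{\rho\in Q}\rho,\Sigma\rangle=1$. Where several orbits survive, we compare their $2$-adic valuations. Orbits with $\sum_Q\rho$ pairing nontrivially with $\Sigma$ have deficit $2$, while affine ones have deficit $0$ by Theorem~\ref{thm:affine-power-two-deletion}. Hence only the minimal-valuation orbits matter, and an odd count of them suffices.

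\textbf{Main obstacle.} The hardest step is the combinatorial choice of split and filter characters in this second case, together with controlling the support classification. The seven remaining holes can realize many configurations, including repeated values and low rank, and a single uniform choice like Lemma~\ref{lem:nonzero-sum-character-choice} is unlikely to cover them all. We would first try to generalize that lemma by choosing $\kappa$ with $\kappa(\Sigma)=1$ and arranging the eight filter characters so that the common support is a single orbit whose character sum is $\kappa$. Configurations where no such choice exists, which are few up to $\mathrm{GL}(K)$ because the span of the holes has rank at most $7$, would be reduced to a finite list. For each we would verify $\Phi_{\mathbf m}\neq0$ $2$-adically by an explicit exact integer computation that does not depend on $\mathbf m$ beyond $\Sigma$. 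This matches the finite verifications the abstract mentions.
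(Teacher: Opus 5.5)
Your treatment of $s=4$ and of the branch $\Sigma\notin\{h_1,\ldots,h_8\}$ is correct and is exactly the paper's argument (Proposition~\ref{prop:generic-eight}, including the rank-three clause giving $\bar d=0$ when $r=4$). The gap is in the branch where $\Sigma$ is a hole, which carries the real content of the theorem. There the mechanism you sketch is self-contradictory. Any eight-set $Q\subseteq\whK$ with a nonzero translation stabilizer $\zeta$ is a union of four $\zeta$-pairs, so $\sum_{\rho\in Q}\rho=0$. Hence, for the invariant supports you want to force, the hypothesis $\langle\sum_{\rho\in Q}\rho,\Sigma\rangle=1$ of Theorem~\ref{thm:nonzero-sum-power-two-deletion} with $\ell=3$ can never hold. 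In addition, an eight-set invariant under a two-dimensional $T$ is two $T$-cosets, so it projects to a two-set in $\whK/T$, not a four-set. The usable form of the contraction is invariance under a single character $u$ with $\langle u,\Sigma\rangle=0$. Nonvanishing is then governed by the \emph{quotient} sum $\sum_{\bar\rho\in\bar Q}\bar\rho$ in $\whK/\Span\{u\}$: pairing the rows $\rho,\rho+u$ extracts a power of two and leaves four-deleted permanents controlled by Corollary~\ref{cor:nonaffine-four-deleted}. The resulting deficit is $1$ (Lemma~\ref{lem:invariant-eight-to-four}), not $2$.

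Your noncancellation step also fails as stated. The summands are $f_Qg_Qb_Q(\mathbf m)$, and the filter products vary in sign and valuation, so the minimal valuation of $b_Q$ alone does not locate the leading terms. Moreover, the four-hole dichotomy ``affine or nonzero sum'' breaks for eight-sets. An eight-set may be nonaffine with zero sum, or have a sum pairing trivially with $\Sigma$, and then neither coefficient law gives any valuation. The paper instead chooses filters from a paired-lift datum: $u$ takes value one on every $p_i$ and zero on every $q_i$ and on $\Sigma$; $\beta$ is one on $q_1,q_2$ and $\gamma$ is one on $q_3,q_4$; and $\kappa=\beta+\gamma$ satisfies $\kappa(\Sigma)=\kappa(d)=1$. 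This confines all common supports to $u$-invariant sets with quotient sum $\bar\kappa$. An exact local computation (Exact finite lemma~\ref{lem:paired-local-signature}) then shows that the twisted signature lives on one orbit with odd normalized coefficient.

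For configurations without such a datum, ``verify $\Phi_{\mathbf m}\neq0$ by an exact computation'' is not yet a method, since $\Phi_{\mathbf m}$ depends on $r$ and on the unknown $b_Q(\mathbf m)$. The missing devices are the following. First, a relation-space classification identifying exactly which configurations remain (23 orbits). Second, for each of them, strongly $\Lambda$-connected witnesses that keep supports inside one four-dimensional $\Lambda$-coset, so that after transport the check is independent of $r$. Third, an integer combination of their twisted signatures projecting onto one free orbit. Then only a single coefficient $b_{Q_{\mathcal O}}(\mathbf m)$ survives, and it is nonzero by Theorem~\ref{thm:nonzero-sum-power-two-deletion} with $\ell=3$ (Lemma~\ref{lem:strong-projector}).
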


The location of $\Sigma$ gives the top-level division.  If $\Sigma$ is not one of the
eight holes, quotienting by $\Span\{\Sigma\}$ reduces the tuple to the zero-sum
four--four splitting lemma.  If $\Sigma$ is a hole, that quotient contains a zero
hole and is no longer a legal reduction.  We then force an invariant
$8\to4$ contraction through a paired lift.  Relation spaces identify the
configurations with no such lift, and exact projectors handle only that fixed
exceptional list.  All remaining machinery has a single purpose: to replace
the illegal quotient when $\Sigma$ itself is a hole.

\subsection{When the total sum is not a hole: the affine quotient}

\begin{proposition}[Generic eight-hole case]\label{prop:generic-eight}
 Assume $r\geq4$.  Let
\[
    h_1,\ldots,h_8\in K\setminus\{0\}
\]
have nonzero sum, and assume
\[
    h_i\neq h_1+\cdots+h_8
    \qquad(i=1,\ldots,8).
\]
Let $\mathbf m$ be a profile of size $N-8$ with
\[
    \sigma(\mathbf m)=h_1+\cdots+h_8.
\]
Then there are deleted sets $A,B\subseteq K$, each of size eight, realizing
a four--four split of the holes and satisfying
\[
    [z^{\mathbf m}]\det C(z)_{A^c,B^c}\neq0.
\]
\end{proposition}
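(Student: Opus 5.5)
The plan is to pass to the quotient by $\Span\{\Sigma\}$, where $\Sigma:=h_1+\cdots+h_8=\sigma(\mathbf m)\neq0$, and reduce to the zero-sum splitting lemma there. Put $\overline K=K/\Span\{\Sigma\}$, so $\dim\overline K=r-1\geq3$. Its dual is identified with $\Sigma^\perp\leq\whK$: a character of $\overline K$ is exactly a character of $K$ vanishing on $\Sigma$. The hypothesis $h_i\neq\Sigma$ together with $h_i\neq0$ makes every image $\bar h_i$ nonzero. The images also have total sum $\bar\Sigma=0$. Hence Lemma~\ref{lem:four-four-splitting} applies in $\overline K$ with rank $r-1\geq3$.

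That lemma gives a split of the labels into $P=(p_1,\ldots,p_4)$ and $R=(q_1,\ldots,q_4)$, together with distinct quotient characters $\eta,\theta$ taking value one on $\bar p_i$ and $\bar q_i$ respectively. Viewed on $K$, these are distinct characters $\eta,\theta\in\Sigma^\perp$ with $\langle\eta,p_i\rangle=\langle\theta,q_i\rangle=1$. These are exactly the legality hypotheses of Proposition~\ref{prop:uniform-affine-filter} with $j=4$. Admissibility gives $\Sigma=\sum p_i+\sum q_i$, as that proposition requires.

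The one remaining hypothesis is $\dim(\Sigma^\perp\cap d^\perp)\geq3$, where $d=p_1+\cdots+p_4$. I expect this to be the only real point, and it splits by $r$.
\begin{itemize}[label=--]
\item If $r\geq5$, then $\Sigma^\perp\cap d^\perp$ has codimension at most two in $\whK$. So its dimension is at least $r-2\geq3$, and nothing more is needed.
\item If $r=4$, the quotient has rank exactly three. Here I use the refined rank-three clause of Lemma~\ref{lem:four-four-splitting}, which lets the split be chosen with $\bar p_1+\cdots+\bar p_4=0$ in $\overline K$. Then $d\in\{0,\Sigma\}$, so $d^\perp\supseteq\Sigma^\perp$. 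Hence $\dim(\Sigma^\perp\cap d^\perp)=\dim\Sigma^\perp=r-1=3$.
\end{itemize}
In both cases the dimension hypothesis holds. The distinctness of $\eta$ and $\theta$, and their membership in $\Sigma^\perp$, are inherited from the quotient.

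Proposition~\ref{prop:uniform-affine-filter} then yields $\Phi_{\mathbf m}(F,G)\neq0$ for $F=\bigwedge_i\Omega_{\eta,p_i}$ and $G=\bigwedge_i\Omega_{\theta,q_i}$. Through Proposition~\ref{prop:filtered-minor}, this gives eight-element locally legal deleted sets $A,B$ realizing the split $P\mid R$ with $[z^{\mathbf m}]\det C(z)_{A^c,B^c}\neq0$, which is the claimed conclusion.
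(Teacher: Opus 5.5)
Your proposal is correct and follows the paper's own proof essentially step for step. You pass to $K/\Span\{\Sigma\}$, apply Lemma~\ref{lem:four-four-splitting} there, and lift $\eta,\theta$ to $\Sigma^\perp$. You then check $\dim(\Sigma^\perp\cap d^\perp)\geq3$ by the codimension bound when $r\geq5$, and by the rank-three clause (which gives $d\in\Span\{\Sigma\}$) when $r=4$, before invoking Proposition~\ref{prop:uniform-affine-filter} with $j=4$.
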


\begin{proof}
Put $\Sigma=\sigma(\mathbf m)$ and pass to the quotient
\[
    \overline K=K/\Span\{\Sigma\}.
\]
The images $\overline h_i$ are nonzero, because no hole equals $\Sigma$, and
their total sum is zero.  Since $\dim\overline K=r-1\geq3$,
Lemma~\ref{lem:four-four-splitting}, applied in the quotient, gives a split
\[
    P=(p_1,p_2,p_3,p_4),
    \qquad
    R=(q_1,q_2,q_3,q_4),
\]
and distinct quotient characters $\eta,\theta$.  Lift them to
$\Sigma^\perp\subseteq\whK$.  Then
\[
    \langle\eta,p_i\rangle=1,
    \qquad
    \langle\theta,q_i\rangle=1
    \qquad(1\leq i\leq4).
\]
Put $d=p_1+p_2+p_3+p_4$.  Since both characters annihilate $\Sigma$ and take
value one on their four assigned holes,
\[
    \Span\{\eta,\theta\}\subseteq \Sigma^\perp\cap d^\perp.
\]
If $r=4$, then $\dim\overline K=3$, and the rank-three clause of
Lemma~\ref{lem:four-four-splitting} gives $\bar d=0$.  Hence
$d\in\Span\{\Sigma\}$ and
\[
 \Sigma^\perp\cap d^\perp=\Sigma^\perp,
 \qquad \dim(\Sigma^\perp\cap d^\perp)=3.
\]
If $r\geq5$, then instead
\[
 \dim(\Sigma^\perp\cap d^\perp)\geq r-2\geq3.
\]
Thus Proposition~\ref{prop:uniform-affine-filter}, with $j=4$, gives the
required deleted sets in every case.
\end{proof}

\subsection{When the total sum is a hole}\label{sec:eight-sum-hole}

Here the direct quotient fails because one hole would become zero.  The
replacement proceeds in three stages: first an invariant
$8\to4$ contraction turns eight deleted characters into four quotient
characters; next paired-lift filters force precisely that geometry; finally a
relation-space calculation and exact projectors settle the configurations for
which no paired lift exists.

\subsubsection{Invariant contraction and paired lifts}
The residual four-deleted valuation is already
Corollary~\ref{cor:nonaffine-four-deleted}.  This is the first point at which
the four-hole arithmetic becomes a direct input to the eight-hole proof.

Let $u\in\whK\setminus\{0\}$.  If $Q=Q+u$ and $|Q|=8$, then $Q$ is a
union of four $u$-pairs.  Write
\[
 \overline{\whK}=\whK/\Span\{u\},
 \qquad
 \bar Q=Q/\Span\{u\}.
\]
If $u$ annihilates $\sigma(\mathbf m)$, pairing an element of
$\overline{\whK}$ with $\sigma(\mathbf m)$ is
well-defined.

To see why an $8\to4$ contraction appears, let
$u\in\whK\setminus\{0\}$ and choose
$\rho_1,\ldots,\rho_4$ in distinct cosets modulo $\Span\{u\}$.  Put
\[
    Q=\bigcup_{i=1}^4\{\rho_i,\rho_i+u\}.
\]
Then $Q$ has eight elements and satisfies $Q=Q+u$, whereas
$\bar Q=Q/\Span\{u\}$ has four elements.  Pairing the Walsh rows
indexed by $\rho$ and $\rho+u$ extracts a forced power of two and leaves a
four-deleted Walsh permanent on the quotient.  The next lemma makes this
picture invariant and supplies the exact valuation needed by the filters.

\begin{lemma}[Invariant eight-to-four contraction]\label{lem:invariant-eight-to-four}
Assume $r\geq4$.  Let $0\neq u\in\whK$, and let
$Q\subseteq\whK$ have eight elements and satisfy
$Q=Q+u$.  Let $|\mathbf m|=N-8$ and assume
\[
    \langle u,\sigma(\mathbf m)\rangle=0,
    \qquad
    \left\langle\sum_{\bar\rho\in\bar Q}\bar\rho,
    \sigma(\mathbf m)\right\rangle=1.
\]
Then
\[
 \vTwo\bigl(b_Q(\mathbf m)\bigr)
 =
 \vTwo\left(\frac{(N-8)!}{\mathbf m!}\right)-1.
\]
In particular $b_Q(\mathbf m)\neq0$.
\end{lemma}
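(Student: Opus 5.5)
The plan is to reduce $b_Q(\mathbf m)$ to four-deleted coefficients on the quotient $\overline{\whK}=\whK/\Span\{u\}$. We pair the rows of the deleted Walsh matrix along $u$, exactly as in the recursion \eqref{eq:full-walsh-recursion} of Lemma~\ref{lem:full-walsh-permanent}, and then apply Theorem~\ref{thm:nonzero-sum-power-two-deletion} with $\ell=2$ to each compressed term. By \eqref{eq:coefficient-permanent} it suffices to prove
\[
 \vTwo\bigl(\per W_{\mathbf m}^{Q}\bigr)=\vTwo\bigl((N-8)!\bigr)-1 .
\]
Since $s_2(N-8)=r-3$, the right-hand side equals $N-8-r+2$.

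First, since $Q=Q+u$, the complement $\whK\setminus Q$ is also $u$-invariant. It therefore splits into $(N-8)/2$ row pairs $\{\tau,\tau+u\}$, indexed by the points of $\overline{\whK}\setminus\bar Q$. Fix a labeled list $d_1,\ldots,d_{N-8}$ with profile $\mathbf m$ and put $n_a=|\{j:\langle u,d_j\rangle=a\}|$.

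For one row pair and two assigned columns $i,j$, summing over the two assignments gives zero when $\langle u,d_i\rangle\neq\langle u,d_j\rangle$. In the compatible case it gives $2(-1)^{\langle u,d_i\rangle}\eps_\tau(d_i+d_j)$. The vector $d_i+d_j$ is then annihilated by $u$, so it is a character of $\overline{\whK}$ and $\eps_\tau(d_i+d_j)$ depends only on $\bar\tau$. The computation is local to each row pair, so it works verbatim with the rows of $Q$ deleted.

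This yields
\[
 \per W_{\mathbf m}^{Q}
 =2^{(N-8)/2}(-1)^{n_1/2}\sum_{\cP\in\Pi_u}
 \per\bigl(\eps_{\bar\tau}(d_P)\bigr)_{\bar\tau\in\overline{\whK}\setminus\bar Q,\ P\in\cP},
\]
where $\Pi_u$ is the set of pair partitions of the column labels into $u$-compatible pairs, and $d_P=d_i+d_j$ for $P=\{i,j\}$. The hypothesis $\langle u,\sigma(\mathbf m)\rangle=0$ makes $n_1$ even. Since $N-8$ is even, $n_0$ is even as well. Hence $\Pi_u\neq\varnothing$ and $|\Pi_u|=(n_0-1)!!(n_1-1)!!$ is odd.

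Next, each compressed matrix is a deleted Walsh matrix for the group $\overline{\whK}$, whose dual is $u^\perp\subseteq K$ and which has order $N/2=2^{r-1}$. Four characters $\bar Q$ are deleted, and there are $N/2-4$ columns $d_P$. The compressed profile has sum $\sum_P d_P=\sigma(\mathbf m)$, and by hypothesis $\langle\sum_{\bar\rho\in\bar Q}\bar\rho,\sigma(\mathbf m)\rangle=1$. Since $r\geq4$, we have $2=\ell<r-1$ and $r-1\geq3$, so the proof of Theorem~\ref{thm:nonzero-sum-power-two-deletion} applies. That proof establishes the permanent valuation directly, independently of the profile:
\[
 \vTwo(\per)=n'-(r-1)+1,\qquad n'=N/2-4 .
\]
Thus every term in the sum over $\Pi_u$ has the same exact valuation. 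After removing that common power of two, we have an odd number of odd integers, so their sum is odd. The total valuation is therefore
\[
 \frac{N-8}{2}+\frac N2-4-r+2=N-8-r+2,
\]
as required.

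The main obstacle is the bookkeeping in the row-pair recursion with deleted rows. I need to check that the compressed column characters genuinely lie in the dual of the quotient, that the surviving row set is exactly $\overline{\whK}\setminus\bar Q$, and that the sign $(-1)^{n_1/2}$ and the orientation signs are common to all partitions. Only then does the parity argument go through without hidden cancellation. I would verify this by writing the permanent as a sum over bijections from the $(N-8)/2$ row pairs to unordered column pairs, mirroring the proof of Lemma~\ref{lem:full-walsh-permanent}.
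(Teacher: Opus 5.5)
Your proposal is correct and follows the paper's proof essentially step by step. You pair the rows along $u$ and extract the common factor $2^{(N-8)/2}$ and sign $(-1)^{n_1/2}$ over the odd number $(n_0-1)!!(n_1-1)!!$ of $u$-compatible column pairings. You then apply the four-deleted nonaffine law (Theorem~\ref{thm:nonzero-sum-power-two-deletion} with $\ell=2$, i.e.\ Corollary~\ref{cor:nonaffine-four-deleted}) on the rank-$(r-1)$ quotient $\overline{\whK}$, so that every residual permanent has the same exact valuation; the bookkeeping you flag and your valuation arithmetic match what the paper does.
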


\begin{proof}
Let $n=N-8$ and use labeled columns $x_1,\ldots,x_n\in K$ with profile
$\mathbf m$.  Pair the rows of $W_{\mathbf m}^{Q}$ as
$\{\rho,\rho+u\}$.  For two labeled columns $x_a,x_b$, the local
$2\times2$ permanent is
\begin{align*}
&\eps_\rho(x_a)\eps_{\rho+u}(x_b)
 +\eps_\rho(x_b)\eps_{\rho+u}(x_a)\\
&\qquad=
 \eps_\rho(x_a+x_b)
 \bigl(\eps_u(x_a)+\eps_u(x_b)\bigr).
\end{align*}
It vanishes unless $u(x_a)=u(x_b)$.  In a surviving pair it is twice a
sign.  Since $u$ annihilates $\sigma(\mathbf m)$, both $u$-parity classes of the labeled columns have
even cardinality.  Thus every surviving term pairs columns within the two
parity classes and contributes a common factor $2^{n/2}$.

Each permanent term determines a unique such column pairing together with a
bijection of its pairs to the quotient rows.  Hence this regrouping neither
omits nor repeats a term; within each parity class the extracted sign is fixed
by the pair sums.

For a surviving labeled pairing $\mathcal P$, replace every column pair by
its sum.  These sums lie in $u^\perp$, their total sum is still
$\sigma(\mathbf m)$, and the
residual row set is
\[
 \overline{\whK}\setminus\bar Q.
\]
Hence the residual permanent is a four-deleted Walsh permanent of order
$N/2-4$.  By Corollary~\ref{cor:nonaffine-four-deleted}, every residual
permanent has exact valuation
\[
 \vTwo((N/2-4)!)-1.
\]
Let $n_i$ be the size of the $i$th $u$-parity class.  The number of
surviving labeled column pairings is
\[
 (n_0-1)!!(n_1-1)!!,
\]
where we use the convention $(-1)!!=1$ when a class is empty.  This number is
odd.
After division by the common power of two, every residual permanent is odd,
so an odd number of them cannot cancel modulo two.  Therefore
\[
 \vTwo\bigl(\per W_{\mathbf m}^{Q}\bigr)
 =\frac n2+\vTwo((N/2-4)!)-1.
\]
Since $n=2(N/2-4)$,
\[
 \vTwo((N-8)!)
 =\frac n2+\vTwo((N/2-4)!),
\]
and \eqref{eq:coefficient-permanent} proves the formula after division by
$\mathbf m!$.
\end{proof}

\medskip
The contraction becomes useful once the filters force a common support to be
invariant in a character direction $u$.  A paired-lift datum records exactly
the four--four split and character incidences needed to create that support.
The resulting criterion handles all but a finite collection of the remaining
configurations in which the total sum is itself a hole.

\begin{definition}[Paired-lift datum]\label{def:paired-lift}
For a labeled eight-hole multiset and a compatible profile $\mathbf m$, a
paired-lift datum consists of characters $u,\beta,\gamma\in\whK$ and a labeled split
\[
 P=(p_1,p_2,p_3,p_4),
 \qquad
 R=(q_1,q_2,q_3,q_4)
\]
of the eight holes such that
\[
 \langle u,\sigma(\mathbf m)\rangle=0,
 \qquad
 \langle u,p_i\rangle=1,
 \qquad
 \langle u,q_i\rangle=0,
\]
\[
 \langle\beta,q_1\rangle=\langle\beta,q_2\rangle=1,
 \qquad
 \langle\gamma,q_3\rangle=\langle\gamma,q_4\rangle=1,
\]
and, with
\[
 d=p_1+p_2+p_3+p_4,
 \qquad
 \kappa=\beta+\gamma,
\]
one has
\[
 \langle\kappa,\sigma(\mathbf m)\rangle=\langle\kappa,d\rangle=1.
\]
\end{definition}

Given such a datum, define
\[
 F=\bigwedge_{i=1}^4\Omega_{u,p_i},
 \qquad
 G=
 \Omega_{\beta,q_1}\wedge\Omega_{\beta,q_2}
 \wedge\Omega_{\gamma,q_3}\wedge\Omega_{\gamma,q_4}.
\]
The incidence assumptions make all eight edge filters legal.

The contraction lemma controls the deleted Walsh coefficient once a common
support is known to be $u$-invariant.  One issue remains: the contributions
of the many such supports could cancel in $\Phi_{\mathbf m}(F,G)$.  The next
exact finite lemma verifies the invariant $8\to4$ geometry and computes the
local translation signature needed to prevent that cancellation.

\begin{exactlemma}[Local paired-filter signature]\label{lem:paired-local-signature}
For every paired-lift datum, the characters $u,\beta,\gamma$ are linearly
independent.  Put
\[
 D=\Span\{u,\beta,\gamma\}\cong\F_2^3.
\]
After removing the common nonzero scalar $(2N)^8$ from the product of the
two filter coefficients, the following hold.
\begin{enumerate}[label=\textnormal{(\roman*)}]
 \item every common-support set $Q$ is $u$-invariant;
 \item every common-support set occupies exactly two distinct $D$-cosets,
 each in four vertices;
 \item for $\bar Q=Q/\Span\{u\}$,
 \[
   \sum_{\bar\rho\in\bar Q}\bar\rho=\bar\kappa;
 \]
 \item fix an unordered pair of distinct ambient $D$-cosets, choose an
 ordering $x+D,y+D$, and put $\delta=x+y$.  Use the ordered basis
 $(u,\beta,\gamma)$ to identify their union with $D\times\F_2$ by, for
 $t\in D$,
 \[
   x+t\longleftrightarrow(t,0),
   \qquad
   y+t\longleftrightarrow(t,1).
 \]
 The group $\mathcal G=D\times\F_2$ acts by
 \[
   (a,e)\cdot(t,j)=(t+a,j+e),
 \]
 and carries the twist
 \[
    \chi_{\mathbf m}(a,e)
    =
    (-1)^{\langle a,\sigma(\mathbf m)\rangle
    +e\langle\delta,\sigma(\mathbf m)\rangle}.
 \]
 If $F=\sum_Qf_Qe_Q$ and $G=\sum_Qg_Qe_Q$, put
 $c_Q=(2N)^{-8}f_Qg_Q$.  For every $\mathcal G$-orbit $\mathcal O$ of
 common supports, encode a support by its $16$-bit incidence mask in the
 following fixed order.  Identify $D$ with $\F_2^3$ by
 $u=001$, $\beta=010$, and $\gamma=100$.  If
 $t=au+b\beta+c\gamma$ and $e\in\F_2$, then $(t,e)$ occupies bit position
 \[
  a+2b+4c+8e.
 \]
 Thus bits $0,\ldots,7$ encode the first $D$-coset and bits $8,\ldots,15$
 encode the second.  Order the masks numerically and let $Q_{\mathcal O}$ be the
 least support in this order.
 For $Q\in\mathcal O$, choose $\tau_Q\in\mathcal G$ with
 $\tau_QQ=Q_{\mathcal O}$ and define
 \[
    \mathcal H(\mathcal O)
    =
    \sum_{Q\in\mathcal O}\chi_{\mathbf m}(\tau_Q)c_Q.
 \]
 The transported summand is independent of the choice of $\tau_Q$.  Define
 the twisted signature by
 \[
    \mathcal H=\sum_{\mathcal O}\mathcal H(\mathcal O)e_{\mathcal O}
 \]
 where $e_{\mathcal O}$ is the standard coordinate vector indexed by the
 orbit $\mathcal O$.  The vector $\mathcal H$ is supported on exactly one
 orbit, whose coefficient has magnitude $128$
 or $384$ and hence exact valuation $7$.
\end{enumerate}
\end{exactlemma}

\begin{proof}[Exact verification]
The incidence conditions make $u,\beta,\gamma$ nonzero.  Since
$\langle\kappa,\sigma(\mathbf m)\rangle=1$, we have $\beta\neq\gamma$.
Also $u\neq\beta$
because $\langle u,q_1\rangle=0$ whereas $\langle\beta,q_1\rangle=1$, and
$u\neq\gamma$ because $\langle u,q_3\rangle=0$ whereas
$\langle\gamma,q_3\rangle=1$.  If the three characters were dependent, then
three distinct nonzero vectors over $\F_2$ would satisfy
$u=\beta+\gamma=\kappa$, contradicting
$\langle u,\sigma(\mathbf m)\rangle=0$ and
$\langle\kappa,\sigma(\mathbf m)\rangle=1$.  Thus they are linearly independent.

Every filter direction belongs to $D=\Span\{u,\beta,\gamma\}$, so every
edge allocation stays inside a $D$-coset.  A support of $F$ is a union of
four $u$-pairs.  If a common support met at least three $D$-cosets, one
occupied coset would contain exactly one $u$-pair.  The same two vertices
would then have to form a $\beta$- or $\gamma$-edge in an allocation
contributing to $G$, forcing $u=\beta$ or $u=\gamma$, a contradiction.
Thus a common support meets at most two $D$-cosets.

It cannot meet only one.  In that case $Q=x+D$.  Choose a complement of
$\Span\{u\}$ in $D$.  Lemma~\ref{lem:affine-filter-walsh}, with $j=4$,
shows that the coefficient of $F$ can be nonzero only if $d\in D^\perp$.  But
$\kappa\in D$ and $\langle\kappa,d\rangle=1$.  Hence this coefficient
vanishes.  Therefore every common support meets exactly two $D$-cosets; the
preceding single-pair argument then forces four vertices in each.

It remains to reduce the universal assertion to the finite calculation.
Record a hole only through its evaluations on the ordered basis
$(u,\beta,\gamma)$ of $D$.  For each $p_i$, its $u$-evaluation is fixed to
$1$ and its $\beta$- and $\gamma$-evaluations are free, giving eight bits.
For $q_1,q_2$, the $u$-evaluation is $0$ and the $\beta$-evaluation is $1$,
leaving two free $\gamma$-bits; for $q_3,q_4$, the $u$-evaluation is $0$ and
the $\gamma$-evaluation is $1$, leaving two free $\beta$-bits.  Thus there
are $2^{12}=4096$ raw local evaluation patterns.  The two parity conditions
$\langle\kappa,\sigma(\mathbf m)\rangle=1$ and
$\langle\kappa,d\rangle=1$ retain exactly $1024$ of them.

For two distinct ambient $D$-cosets, all remaining dependence is through the
eight bits $\langle\delta,p_i\rangle$ and
$\langle\delta,q_i\rangle$, where $\delta$ is their difference.  Enumerating
all $2^8=256$ assignments is stronger than restricting to assignments
realizable by a particular ambient $\delta$.  The normalized filter
coefficients, common supports, translation twist, and quotient sum depend
only on these twelve local bits and eight external bits: inside either coset
each filter coefficient is the sign determined by the corresponding
evaluation, and translating or interchanging the two cosets uses only the
displayed twist character.  Hence every paired-lift datum maps to one of the
$1024\cdot256$ enumerated cases.

The exact calculation summarized in
Table~\ref{tab:finite-verifications} and archived in the Zenodo
record~\cite{KreindelZabokritskiyVerification2026} expands the eight
normalized filters over the integers in every one of these cases.  It checks
$2{,}097{,}152$ common supports and verifies the transport, quotient-sum,
support, and valuation assertions.  The Zenodo record contains the source program,
recorded output, and full reproduction protocol.
\end{proof}

\begin{proposition}[Paired-lift criterion]\label{prop:paired-lift}
Let $K=\F_2^r$, let $N=|K|$, and assume $r\geq4$.  Let
$P=(p_1,\ldots,p_4)$ and $R=(q_1,\ldots,q_4)$ be a labeled split of eight
nonzero vectors with nonzero total sum, and let $\mathbf m$ be a profile with
$|\mathbf m|=N-8$ and
\[
\sigma(\mathbf m)=\sum_{i=1}^4p_i+\sum_{i=1}^4q_i.
\]
If $P\mid R$ admits a paired-lift datum for $\mathbf m$, then there
are eight-element sets $A,B\subseteq K$ such that $A$ is the union of four
disjoint edges with differences $p_1,\ldots,p_4$, $B$ is the union of four
disjoint edges with differences $q_1,\ldots,q_4$, and
\[
 [z^{\mathbf m}]\det C(z)_{A^c,B^c}\neq0.
\]
\end{proposition}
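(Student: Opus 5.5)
The plan is to show that $\Phi_{\mathbf m}(F,G)\neq0$ for the filters
\[
 F=\bigwedge_{i=1}^4\Omega_{u,p_i},\qquad
 G=\Omega_{\beta,q_1}\wedge\Omega_{\beta,q_2}\wedge\Omega_{\gamma,q_3}\wedge\Omega_{\gamma,q_4}
\]
attached to the paired-lift datum, and then invoke Proposition~\ref{prop:filtered-minor}. All eight factors are legal by the incidence conditions of Definition~\ref{def:paired-lift}. So $F$ is a combination of endpoint wedges realizing $p_1,\ldots,p_4$, and $G$ of endpoint wedges realizing $q_1,\ldots,q_4$. Once the scalar is nonzero, Proposition~\ref{prop:filtered-minor} supplies the sets $A,B$ and the nonzero coefficient directly.

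First I control the individual deleted coefficients. By Exact Lemma~\ref{lem:paired-local-signature}(i) and (iii), every common support $Q$ is $u$-invariant with $\sum_{\bar\rho\in\bar Q}\bar\rho=\bar\kappa$. The datum gives $\langle u,\sigma(\mathbf m)\rangle=0$ and $\langle\kappa,\sigma(\mathbf m)\rangle=1$, and $r\geq4$. Lemma~\ref{lem:invariant-eight-to-four} therefore applies to every common support and yields
\[
 \vTwo\bigl(b_Q(\mathbf m)\bigr)=\vTwo\!\left(\frac{(N-8)!}{\mathbf m!}\right)-1=:v_0,
\]
the same value for all $Q$. The danger is cancellation among the many supports, so the remaining work is to organize the sum $\sum_Q f_Qg_Qb_Q(\mathbf m)$ into classes whose contributions all share one exact valuation and whose number is odd.

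By Lemma~\ref{lem:paired-local-signature}(ii), each common support lies in an unordered pair of distinct $D$-cosets, four vertices in each. I group the supports first by this coset pair $\{x+D,y+D\}$, and within the pair by $\mathcal G$-orbits, where $\mathcal G=D\times\F_2$ acts as in part (iv). The $D$-part of $\mathcal G$ is a genuine character translation, so Lemma~\ref{lem:character-translation} gives $b_{Q+a}(\mathbf m)=\eps_a(\sigma(\mathbf m))b_Q(\mathbf m)$. The swap $e=1$ corresponds to translation by $\delta=x+y$ composed with a translation in $D$, which contributes the factor $\eps_\delta(\sigma(\mathbf m))$. Hence $b_Q(\mathbf m)=\chi_{\mathbf m}(\tau_Q)\,b_{Q_{\mathcal O}}(\mathbf m)$ up to the convention on $\tau_Q$, and the contribution of the pair collapses to
\[
 (2N)^8\sum_{\mathcal O}\mathcal H(\mathcal O)\,b_{Q_{\mathcal O}}(\mathbf m).
\]
By part (iv) exactly one orbit survives, with $\vTwo(\mathcal H(\mathcal O))=7$. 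So each coset pair contributes a term of exact valuation $8\,\vTwo(2N)+7+v_0$, independent of the pair.

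It remains to count the unordered pairs of distinct $D$-cosets. There are $M=N/8=2^{r-3}\geq2$ cosets, so the number of pairs is $\binom{M}{2}=2^{r-4}(2^{r-3}-1)$, which is odd only when $r=4$. This parity issue is the step I expect to be the main obstacle. My first attempt would be a second level of translation averaging. Translating by a character $\zeta\notin D$ carries one coset pair to another, and by Lemmas~\ref{lem:shift-covariance} and \ref{lem:character-translation} the filter phase $\eps_\zeta(d+d_R)=\eps_\zeta(\sigma(\mathbf m))$ cancels the coefficient phase, exactly as in Proposition~\ref{prop:uniform-affine-filter}. So all pairs with the same difference class $\bar\delta\in\whK/D$ contribute identical terms, and there are $M/2$ pairs in each class when $M\geq2$. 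That factor is a pure power of two, so it cancels uniformly, and the problem reduces to summing over the $M-1$ nonzero classes $\bar\delta$, which is an odd number. Each class then contributes one odd normalized term, and an odd number of odd terms is nonzero, giving $\Phi_{\mathbf m}(F,G)\neq0$. The delicate point I would check carefully is that Lemma~\ref{lem:paired-local-signature}(iv) really produces the same valuation $7$ for every external bit pattern $\langle\delta,\cdot\rangle$, so that the class contributions all share one valuation even though their odd parts may differ in sign.
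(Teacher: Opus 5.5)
Your proposal is correct and follows the paper's proof. Both use the same filters and the contraction lemma to give every $b_Q(\mathbf m)$ the same valuation. Both use translation covariance to make all $M/2=2^{r-4}$ coset pairs with a given difference in $\whK/D$ contribute identically, and then use the odd count $M-1$ of nonzero directions, each contributing an odd normalized term by Exact Lemma~\ref{lem:paired-local-signature}(iv).

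Your explicit check that the $\mathcal G$-twist $\chi_{\mathbf m}$ equals the character-translation phase of $b_Q(\mathbf m)$ fills in a step the paper leaves implicit. The ``delicate point'' you flag, valuation $7$ for every external bit pattern, is exactly what that exact lemma asserts.
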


\begin{proof}
By Lemma~\ref{lem:paired-local-signature}, every common-support $Q$ satisfies
$Q=Q+u$ and
\[
 \left\langle\sum_{\bar\rho\in\bar Q}\bar\rho,
 \sigma(\mathbf m)\right\rangle
 =\langle\kappa,\sigma(\mathbf m)\rangle=1.
\]
Lemma~\ref{lem:invariant-eight-to-four} therefore gives the same exact
$2$-adic valuation for every deleted coefficient $b_Q(\mathbf m)$ appearing
in the filtered sum.

Put $M=|\whK/D|=2^{r-3}$.  For every nonzero direction
$\delta\in\whK/D$, there are exactly $M/2=2^{r-4}$ unordered pairs of
$D$-cosets of difference $\delta$.  All pairs with the same direction are
translates of one another.  Shift covariance multiplies the product of the
two filter coefficients by $\eps_\zeta(\sigma(\mathbf m))$, while
Lemma~\ref{lem:character-translation} multiplies the deleted coefficient by
the same sign.  Their product is therefore unchanged.  Hence the contribution
of a fixed direction has the common factor $2^{r-4}$.

After this factor, the filter scalar, and the common deleted-coefficient
valuation are removed, Lemma~\ref{lem:paired-local-signature} says that every
direction contributes an odd integer.  There are $M-1$ nonzero directions,
and this number is odd.  The normalized filtered sum is therefore odd and in
particular nonzero.  Proposition~\ref{prop:filtered-minor} gives the
required sets $A,B$.
\end{proof}

\subsubsection{The exceptional relation spaces}
Assume from now on that the nonzero total hole sum occurs among the eight
holes and that no paired-lift datum exists.  This is exactly the branch not
settled by the affine quotient or by the invariant $8\to4$ contraction.  The
relation space replaces its many coordinate realizations by a fixed list of
linear-equivalence types.

Let
$e_1,\ldots,e_8$ be the
standard basis of $\F_2^8$, and put
$\mathbf1=e_1+\cdots+e_8=(1,\ldots,1)$.  For the labeled hole tuple
$h_1,\ldots,h_8$, define
\[
 \phi:\F_2^8\longrightarrow\Span\{h_1,\ldots,h_8\},
 \qquad
 \phi(e_i)=h_i,
\]
and put
\[
 \mathcal R=\ker\phi.
\]
The labeled configuration is determined up to linear equivalence by
$\mathcal R$.
The conditions that every hole and the total sum $\phi(\mathbf1)$ are
nonzero are
\[
 e_i\notin\mathcal R,
 \qquad
 \mathbf1\notin\mathcal R.
\]
Furthermore
\[
 \phi(\mathbf1)=h_i
 \quad\Longleftrightarrow\quad
 \mathbf1+e_i\in \mathcal R.
\]
Via their eight evaluation bits, characters of the hole span are identified
with $\mathcal R^\perp\leq\F_2^8$, where the orthogonal complement is taken
with respect to the standard dot product.  If
$\phi(\mathbf1)$ occurs among the holes, set
\[
 m_{\rm tot}=|\{i:h_i=\phi(\mathbf1)\}|.
\]
Then $1\leq m_{\rm tot}\leq6$.  Indeed, eight copies of a vector have total
sum zero, while seven copies together with an eighth hole $x$ have total sum
$\phi(\mathbf1)+x$; equality with $\phi(\mathbf1)$ would force $x=0$, which
is forbidden.

In this language, Definition~\ref{def:paired-lift} is a finite condition on
$\mathcal R^\perp$: one seeks $u\in\mathcal R^\perp$ of Hamming weight
four, pairs the four zero coordinates of $u$, and seeks
$\beta,\gamma\in\mathcal R^\perp$ taking value
one on the two selected pairs such that, for $\kappa=\beta+\gamma$,
\[
 \langle\kappa,\mathbf1\rangle=1,
 \qquad
 \langle\kappa,u\rangle=1.
\]
The second equality is exactly $\kappa(d)=1$.

\begin{exactlemma}[Exact failure classification]\label{lem:failure-classification}
Among all nonzero-sum eight-hole relation spaces for which the total sum occurs as a
hole, the paired-lift criterion fails in exactly $23$ orbits under
permutation of the eight labels.  Their multiplicity distribution is
\[
\begin{array}{c|rrrrrr}
 m_{\rm tot}&1&2&3&4&5&6\\
\hline
 \#\text{ failure orbits}&7&6&5&3&1&1.
\end{array}
\]
The $23$ orbit representatives are exactly the hole multisets listed in
\hyperref[app:certificates]{Appendix~\ref*{app:certificates}}.
\end{exactlemma}

\begin{proof}[Exact exhaustive verification]
The relation-space check in Table~\ref{tab:finite-verifications} enumerates
the subspaces in reduced row-echelon form and applies the stated tests by
exact row reduction and orthogonality.  It yields exactly the $23$
label-permutation orbits and multiplicity distribution in the statement.
The exhaustive procedure, recorded output, and orbit representatives are
archived in the Zenodo
record~\cite{KreindelZabokritskiyVerification2026}.
\end{proof}

\medskip
To turn the classification into nonzero filtered minors, we now describe the
finite certificate language for the paired-lift
failures.  Let $\Lambda\leq\whK$ be a four-dimensional character subspace and
fix an ordered basis identifying $\Lambda$ with $\F_2^4$; the symbols
$<$ below refer to the order of the four-bit coordinate masks
$0,1,\ldots,15$, read as binary integers.  For a legal
edge filter we
remove the common scalar $-2N$ and use the normalized local form
\[
 \widehat\Omega_{\alpha,p}
 =\sum_{\substack{\rho\in\Lambda\\\rho<\rho+\alpha}}\eps_\rho(p)
   e_\rho\wedge e_{\rho+\alpha},
  \qquad \alpha\in\Lambda\setminus\{0\}.
\]

A witness
\[
 \mathcal W=(P;\boldsymbol\alpha\mid R;\boldsymbol\beta)
\]
consists of ordered hole four-tuples $P=(p_1,\ldots,p_4)$ and
$R=(q_1,\ldots,q_4)$ and ordered character four-tuples
$\boldsymbol\alpha=(\alpha_1,\ldots,\alpha_4)$ and
$\boldsymbol\beta=(\beta_1,\ldots,\beta_4)$ in $\Lambda^4$.  It is legal when
\[
 \alpha_i(p_i)=\beta_i(q_i)=1.
\]
Write
\[
 \widehat F_{\mathcal W}
 =\bigwedge_{i=1}^4\widehat\Omega_{\alpha_i,p_i},
 \qquad
 \widehat G_{\mathcal W}
 =\bigwedge_{i=1}^4\widehat\Omega_{\beta_i,q_i}.
\]
The corresponding ambient filters are
\[
 F_{\mathcal W}
 =\bigwedge_{i=1}^4\Omega_{\alpha_i,p_i},
 \qquad
 G_{\mathcal W}
 =\bigwedge_{i=1}^4\Omega_{\beta_i,q_i}.
\]

The next condition is designed to prevent a common ambient support from
spreading over several $\Lambda$-cosets before the projector identity is
applied.

\begin{definition}[Strongly $\Lambda$-connected witness]
\label{def:strong-lambda-connected}
Write $[4]=\{1,2,3,4\}$.  For a nonempty $I\subseteq[4]$, put
\[
 \widehat F_I
 =\bigwedge_{i\in I}\widehat\Omega_{\alpha_i,p_i},
 \qquad
 \widehat G_I
 =\bigwedge_{i\in I}\widehat\Omega_{\beta_i,q_i}.
\]
The wedge factors are taken in increasing order of their indices.
A legal witness is strongly $\Lambda$-connected if, for every
$k\in\{2,3,4\}$, every two ordered partitions
\[
 [4]=I_1\sqcup\cdots\sqcup I_k
     =J_1\sqcup\cdots\sqcup J_k
\]
into nonempty blocks, and every bijection $\pi:[k]\to[k]$, there is an index
$t$ such that
\[
 \operatorname{supp}(\widehat F_{I_t})
 \cap
 \operatorname{supp}(\widehat G_{J_{\pi(t)}})
 =\varnothing.
\]
\end{definition}

Indeed, a common ambient support meeting $k$ distinct $\Lambda$-cosets would
allocate the left and right factors into just such matched blocks, with
intersecting local support for every matched pair.  The definition therefore
excludes supports spread across several $\Lambda$-cosets before any
cancellation between factor allocations.  It is also the finite condition
checked by the certificate verifier archived in the Zenodo
record~\cite{KreindelZabokritskiyVerification2026}.

Fix eight nonzero holes and a compatible profile $\mathbf m$ with nonzero
$\sigma(\mathbf m)$.  For an ambient witness $\mathcal W$
write
\[
 F_{\mathcal W}=\sum_Q f_Q^{\mathcal W}e_Q,
 \qquad
 G_{\mathcal W}=\sum_Q g_Q^{\mathcal W}e_Q,
\]
where $|\mathbf m|=N-8$.  The filtered scalar is
$\Phi_{\mathbf m}(F_{\mathcal W},G_{\mathcal W})$.
Let $\mathfrak O_\Lambda=\binom{\Lambda}{8}/\Lambda$ be the set of
$\Lambda$-translation orbits.  Encode an eight-set by its $16$-bit incidence
mask in the binary order on $\Lambda$ fixed above, and order these masks
numerically.  For each $\mathcal O\in\mathfrak O_\Lambda$, choose its least
member $Q_{\mathcal O}$ in this order.  If
\[
 \widehat F_{\mathcal W}=\sum_Q\widehat f_Qe_Q,
 \qquad
 \widehat G_{\mathcal W}=\sum_Q\widehat g_Qe_Q,
\]
put
\[
 \mathcal H_{\mathcal W}(\mathcal O)
 =\sum_{\zeta\in\Lambda}\eps_\zeta(\sigma(\mathbf m))
   \widehat f_{Q_{\mathcal O}+\zeta}
   \widehat g_{Q_{\mathcal O}+\zeta},
 \qquad
 \mathcal H_{\mathcal W}\in\mathbb Z^{\mathfrak O_\Lambda},
\]
and let $e_{\mathcal O}$ denote the corresponding standard coordinate
vector.  This twisted translation sum, rather than the untwisted orbit sum,
is the coordinate used in the certificate table.

\begin{lemma}[Strong coordinate-projector criterion]\label{lem:strong-projector}
Let $\mathcal W_1,\ldots,\mathcal W_t$ be strongly $\Lambda$-connected legal
witnesses for the same hole multiset and compatible profile $\mathbf m$ with
$\sigma(\mathbf m)\neq0$.  Suppose
every common support $Q$ of $F_{\mathcal W_j}$ and
$G_{\mathcal W_j}$ satisfies
\[
 \sum_{\rho\in Q}\rho=\kappa_j,
 \qquad
 \langle\kappa_j,\sigma(\mathbf m)\rangle=1,
\]
and suppose integers $c_1,\ldots,c_t$ satisfy
\[
 \sum_{j=1}^t c_j \mathcal H_{\mathcal W_j}
   =c_{\mathrm{proj}}\,e_{\mathcal O}
\]
 for one free $\Lambda$-translation orbit $\mathcal O$ (that is, an orbit
 with trivial translation stabilizer) and one nonzero integer
 $c_{\mathrm{proj}}$.
Then at least one filtered sum
$\Phi_{\mathbf m}(F_{\mathcal W_j},G_{\mathcal W_j})$ is
nonzero.
\end{lemma}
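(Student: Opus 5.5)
The plan is to argue by contraposition through linearity.  I will show that each filtered sum $\Phi_{\mathbf m}(F_{\mathcal W_j},G_{\mathcal W_j})$ equals a fixed linear functional, independent of $j$, applied to the twisted signature vector $\mathcal H_{\mathcal W_j}$.  Concretely, I aim for an identity
\[
 \Phi_{\mathbf m}(F_{\mathcal W_j},G_{\mathcal W_j})
 =\sum_{\mathcal O\in\mathfrak O_\Lambda}\lambda_{\mathcal O}\,
   \mathcal H_{\mathcal W_j}(\mathcal O),
\]
where the weights $\lambda_{\mathcal O}$ depend only on $\Lambda$, $N$, $\mathbf m$ and $\mathcal O$, but not on the witness.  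Granting this, the hypothesis $\sum_jc_j\mathcal H_{\mathcal W_j}=c_{\mathrm{proj}}e_{\mathcal O}$ gives $\sum_jc_j\Phi_{\mathbf m}(F_{\mathcal W_j},G_{\mathcal W_j})=c_{\mathrm{proj}}\lambda_{\mathcal O}$.  So it suffices to show $\lambda_{\mathcal O}\neq0$ for the given free orbit; then some $\Phi_{\mathbf m}(F_{\mathcal W_j},G_{\mathcal W_j})$ must be nonzero.

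\textbf{Step 1 (localization to one $\Lambda$-coset).}  Every filter direction $\alpha_i,\beta_i$ lies in $\Lambda$, so every edge allocation of $F_{\mathcal W}$ or $G_{\mathcal W}$ stays inside $\Lambda$-cosets.  Suppose a common ambient support $Q$ met $k\geq2$ cosets.  Grouping the left factors by coset, and likewise the right factors, would give two ordered partitions of $[4]$ into $k$ blocks, together with a matching bijection $\pi$, such that every matched pair of local supports intersects after translation to $\Lambda$.  Strong $\Lambda$-connectedness forbids exactly this, so every common support lies in a single coset $x+\Lambda$.

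Inside $x+\Lambda$, Lemma~\ref{lem:edge-filter} shows that each ambient legal filter restricts to $-2N\,\eps_x(p)$ times the normalized local filter $\widehat\Omega_{\alpha,p}$ transported by $x$.  Hence $f^{\mathcal W}_{x+Q}g^{\mathcal W}_{x+Q}=(2N)^8\eps_x(d_L+d_R)\,\omega_{x,Q}^2\,\widehat f_Q\widehat g_Q$, where $\omega_{x,Q}$ is the orientation sign comparing the mask order on $\Lambda$ with $\prec$.  This sign is common to $F$ and $G$, so it cancels in the square, and $d_L+d_R=\sigma(\mathbf m)$.

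\textbf{Step 2 (assembling $\lambda_{\mathcal O}$).}  By Lemma~\ref{lem:character-translation}, $b_{x+Q}(\mathbf m)=\eps_x(\sigma(\mathbf m))b_Q(\mathbf m)$, so the two phases $\eps_x(\sigma(\mathbf m))$ cancel.  All $N/16$ cosets therefore contribute equally.  Within $\Lambda$, I write each $Q$ as $Q_{\mathcal O}+\zeta$ and use $b_{Q_{\mathcal O}+\zeta}=\eps_\zeta(\sigma(\mathbf m))b_{Q_{\mathcal O}}$.  For an orbit with stabilizer $S$, each member is hit $|S|$ times as $\zeta$ ranges over $\Lambda$.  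This yields
\[
 \lambda_{\mathcal O}=(2N)^8\frac{N}{16}\,\frac{b_{Q_{\mathcal O}}(\mathbf m)}{|S_{\mathcal O}|},
\]
which is independent of $j$ as required.  Nonfree orbits play no further role, since the projector kills their coordinates.

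\textbf{Step 3 (nonvanishing).}  Because $c_{\mathrm{proj}}\neq0$, some $\mathcal H_{\mathcal W_j}(\mathcal O)\neq0$.  Hence some translate $Q_{\mathcal O}+\zeta$ is a common support for $\mathcal W_j$; by Step 1 the ambient translate is one as well.  By hypothesis its character sum is $\kappa_j$ with $\langle\kappa_j,\sigma(\mathbf m)\rangle=1$, and translation by $\zeta$ does not change the sum of an eight-set.  Theorem~\ref{thm:nonzero-sum-power-two-deletion} with $\ell=3<r$ (note $r\geq4$ since $\dim\Lambda=4$ and $\Lambda\leq\whK$) then gives $b_{Q_{\mathcal O}}(\mathbf m)\neq0$, hence $\lambda_{\mathcal O}\neq0$.

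I expect the main obstacle to be Step 1, and more precisely the bookkeeping that turns the combinatorial disjointness condition into genuine vanishing of every multi-coset ambient coefficient.  One must check that every allocation contributing to a multi-coset $Q$ induces matched blocks of exactly the form in Definition~\ref{def:strong-lambda-connected}, including the ordering of wedge factors within blocks.  A secondary point is confirming that the orientation and coset-translation signs are identical for $F$ and $G$, so that the orbit weights are truly witness-independent.
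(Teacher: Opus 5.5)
Your proposal is correct and follows the paper's proof essentially step for step. Strong $\Lambda$-connectedness confines every common support to a single $\Lambda$-coset, and the local twisted identity is then transported to all $N/16$ cosets: the filter phase $\eps_x(\sigma(\mathbf m))$ cancels the translation phase of $b_Q(\mathbf m)$, and the orientation sign cancels between $f$ and $g$. This gives $\sum_j c_j\Phi_{\mathbf m}(F_{\mathcal W_j},G_{\mathcal W_j})=(2N)^8\frac{N}{16}c_{\mathrm{proj}}b_{Q_{\mathcal O}}(\mathbf m)$, and Theorem~\ref{thm:nonzero-sum-power-two-deletion} with $\ell=3$ finishes. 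The only cosmetic difference is that you carry a general weight $1/|S_{\mathcal O}|$ for possibly nonfree orbits, whereas the paper notes that the character-sum hypothesis already makes every common-support orbit free (a nonzero stabilizer forces $\sum_{\rho\in Q}\rho=0$), so the sixteen terms of each coordinate are distinct.
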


\begin{proof}
Strong $\Lambda$-connectedness excludes supports meeting two distinct ambient
$\Lambda$-cosets.  If an eight-set $Q$ has a nonzero translation stabilizer
$\zeta$, then it is a union of four $\zeta$-pairs and hence
$\sum_{\rho\in Q}\rho=0$.  The hypothesis
$\left\langle\sum_{\rho\in Q}\rho,\sigma(\mathbf m)\right\rangle=1$
therefore makes every common-support
translation orbit free, so the sixteen terms in each coordinate
$\mathcal H_{\mathcal W}(\mathcal O)$ are distinct.  The local twisted identity is
copied to each of the $N/16$
ambient $\Lambda$-cosets.  By Lemmas~\ref{lem:shift-covariance} and
\ref{lem:character-translation}, translation by $\zeta$ multiplies both the
filter product and the deleted coefficient by
$\eps_\zeta(\sigma(\mathbf m))$, so the two signs cancel.  The local and
ambient orders can differ by a fixed orientation sign.  The same reordering
multiplies both $f_Q^{\mathcal W}$ and $g_Q^{\mathcal W}$ by that sign, so
their product is unchanged.  Consequently the identity is exact:
\[
 \sum_{j=1}^t c_j
 \Phi_{\mathbf m}(F_{\mathcal W_j},G_{\mathcal W_j})
 =(-2N)^8\frac N{16}\,c_{\mathrm{proj}}\,
   b_{Q_{\mathcal O}}(\mathbf m).
\]
Since the target coordinate is nonzero, $Q_{\mathcal O}$ occurs in at least
one witness $\mathcal W_j$.  For that witness
$\sum_{\rho\in Q_{\mathcal O}}\rho=\kappa_j$, which pairs nontrivially with
$\sigma(\mathbf m)$.  The final
coefficient is therefore nonzero by
Theorem~\ref{thm:nonzero-sum-power-two-deletion}, applied with $\ell=3$.
Thus the displayed
linear combination is nonzero, and not all individual filtered sums can
vanish.
\end{proof}

\subsubsection{Completion of the exceptional branch}
We now isolate exactly what remains in the total-sum-hole case.
Write
\[
 \Sigma=h_1+\cdots+h_8=\sigma(\mathbf m).
\]
Proposition~\ref{prop:paired-lift} applies whenever a paired-lift datum
exists.  Hence only configurations in which $\Sigma$ is a hole and no paired lift
exists remain.
Lemma~\ref{lem:failure-classification} places them in exactly $23$
relation-space orbits.  This classification identifies the configurations
but does not yet produce the deleted boundary sets $A$ and $B$.

To finish, for every applicable orbit we must exhibit legal strongly
$\Lambda$-connected witnesses satisfying the common-support character-sum
condition, together with integer coefficients whose local signatures isolate
one free translation orbit.  Lemma~\ref{lem:strong-projector} then forces at
least one filtered scalar $\Phi_{\mathbf m}$ to be nonzero, and
Proposition~\ref{prop:filtered-minor} produces the required deleted
boundaries.  The next lemma records this remaining finite assertion.
\hyperref[app:certificates]{Appendix~\ref*{app:certificates}} supplies the
complete $23$-entry, $45$-witness certificate data used in its exact
verification; the verifier and recorded output are archived in the Zenodo
record~\cite{KreindelZabokritskiyVerification2026}.

\begin{exactlemma}[Exact strong certificate lemma]\label{lem:finite-exceptional}
The preceding classification has $23$ paired-lift failure orbits.  Each
admits strongly $\Lambda$-connected legal witnesses in the sense of
Definition~\ref{def:strong-lambda-connected}, satisfying the
common-support character-sum condition and an integer projector identity
onto one free $\Lambda$-translation orbit.  Thus every hypothesis of
Lemma~\ref{lem:strong-projector} holds for the corresponding certificate
printed in
\hyperref[app:certificates]{Appendix~\ref*{app:certificates}}.
\end{exactlemma}

\begin{proof}[Exact verification]
The certificate verifier archived in the Zenodo
record~\cite{KreindelZabokritskiyVerification2026} reads the entries in
Appendix~\ref{app:certificates} and
directly expands their local $\Omega$-forms over the integers.  It verifies all
legality, connectedness, support-sum, valuation, and projector assertions
listed in Table~\ref{tab:finite-verifications}.  The Zenodo record also contains the
verification procedure, complete coefficient vectors, and recorded output.
\end{proof}

\begin{lemma}[Transport of the printed certificates]
\label{lem:certificate-transport}
Let a printed representative be realized in $\F_2^d$, and let a labeled
hole tuple in $K\cong\F_2^r$ have the same relation space up to relabeling.
If $d\leq r$, then the printed certificate applies to that tuple.  In
particular, every printed certificate applies when $r\geq5$, while for
$r=4$ every certificate except the unique rank-five representative applies.
\end{lemma}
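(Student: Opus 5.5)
The plan is to transport the certificate by a linear embedding. The printed representative is a labeled tuple $h_1^\circ,\ldots,h_8^\circ\in\F_2^d$, and its witnesses use a four-dimensional character subspace $\Lambda^\circ$ of $(\F_2^d)^*$. After relabeling, the target tuple $h_1,\ldots,h_8\in K$ has the same relation space $\mathcal R=\ker\phi$. Replacing $\F_2^d$ by the span of the representative, I take $\Span\{h_i^\circ\}\cong\F_2^8/\mathcal R\cong\Span\{h_i\}$. This gives a linear isomorphism $\psi$ between the two spans with $\psi(h_i^\circ)=h_i$. It sends the total sum $\phi(\mathbf1)$ to the total sum, so $\Sigma$ is matched as well. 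When $d\leq r$ (reading $d$ as the rank of the representative, or in any case after enlarging by a complement), $\psi$ extends to an injective linear map $\F_2^d\to K$.

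On characters, the restriction map $\psi^*:\whK\to(\F_2^d)^*$ is surjective. I choose a linear section $\iota:(\F_2^d)^*\to\whK$ with $\psi^*\circ\iota=\mathrm{id}$, obtained by extending characters from $\psi(\F_2^d)$ to $K$ along a complement. Then $\Lambda=\iota(\Lambda^\circ)$ is four-dimensional. Moreover $\langle\iota(\alpha),\psi(x)\rangle=\langle\alpha,x\rangle$ for all $x$, so every evaluation bit $\alpha_i(p_i)$ and $\beta_i(q_i)$ is preserved.

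Next I check the hypotheses of Lemma~\ref{lem:strong-projector}, using the fact that they are all local. Legality depends only on these evaluation bits, so it is preserved. The normalized forms $\widehat\Omega_{\alpha,p}$ on $\Lambda\cong\F_2^4$ have coefficients $\eps_\rho(p)$ with $\rho\in\Lambda$, and these are also unchanged. Hence $\widehat F_{\mathcal W}$, $\widehat G_{\mathcal W}$ and all the partial wedges $\widehat F_I,\widehat G_J$ coincide with the printed ones. This gives strong $\Lambda$-connectedness, the common-support sums $\kappa_j$ (all in $\Lambda$), and the twisted vectors $\mathcal H_{\mathcal W}$. For the twist $\eps_\zeta(\sigma(\mathbf m))$ with $\zeta\in\Lambda$, I use $\sigma(\mathbf m)=\Sigma=\psi(\Sigma^\circ)$. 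The condition $\langle\kappa_j,\Sigma\rangle=1$ and the projector identity onto a free orbit then hold verbatim, as verified in Lemma~\ref{lem:finite-exceptional}.

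The main obstacle is to make sure that the verifier's local data really depend only on these pairings and not on the ambient rank. In particular, the common ambient supports must be captured by the local $\Lambda$-computation, and the proof of Lemma~\ref{lem:strong-projector} already handles the passage to $N/16$ cosets for any $r\geq4$.

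For the final sentence of the lemma, I use the ranks of the printed representatives. The hole span has dimension $8-\dim\mathcal R\leq 5$ here, since $\mathbf1+e_i\in\mathcal R$ and $m_{\rm tot}$ forces further relations. I would read off from Appendix~\ref{app:certificates} that exactly one representative has rank five and all the others have rank at most four. Since the certificates need $\Lambda\leq(\F_2^d)^*$ with $\dim\Lambda=4$, the rank-$\leq4$ representatives are realized with $d=4$. Then $d\leq r$ holds for all of them when $r\geq5$, and for all but the rank-five one when $r=4$.
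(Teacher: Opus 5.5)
Your proposal is correct and follows the paper's proof: the isomorphism of hole spans forced by equal relation spaces, its extension to an injection $J:\F_2^d\hookrightarrow K$ when $d\leq r$, and your section $\iota$ (which is exactly $\alpha\mapsto\alpha\circ\pi$ for a linear left inverse $\pi$ of $J$) preserve every evaluation bit, hence every local wedge, support sum and twist, and therefore the projector identity, with the final rank count read off from the appendix. Two small corrections: $d$ must be the ambient dimension of the printed realization ($4$, or $5$ for Case~3.4), not the rank of the hole span, because $\Lambda$ is four-dimensional while some spans (e.g.\ Case~2.3) are only two-dimensional; and $\mathbf1+e_i\in\mathcal R$ does not by itself bound the span dimension by $5$ (with $m_{\rm tot}=1$ it can be $7$), so that bound for the $23$ representatives comes only from the printed list.
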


\begin{proof}
Relabeling merely permutes the eight factors.  Let $L_0$ be the span of the
printed representative and $L$ the span of the given tuple.  Equality of the
relation spaces makes the assignment of printed holes to given holes extend
to an isomorphism $T:L_0\to L$.  When $d\leq r$, extend $T$ to an injection
$J:\F_2^d\hookrightarrow K$ and choose a linear left inverse
$\pi:K\to\F_2^d$.  A printed character $\alpha$ lifts to
$\widetilde\alpha=\alpha\circ\pi$.  This preserves every incidence,
wedge-support identity, translation signature, and value of
$\sum_{\rho\in Q}\rho$, so the printed projector identity transports.
All printed representatives except the unique rank-five representative are
realized in $\F_2^4$; that representative is realized in $\F_2^5$.
\end{proof}

\begin{proof}[Proof of Theorem~\ref{thm:eight-hole-nonzero-sum}]
If $s=4$, then $N=8$ and every request is a hole, so admissibility forces the
eight-hole sum to be zero.  Thus the nonzero-sum case is vacuous.

Assume $s\geq5$ and put
\[
 K=H,\qquad N=|K|=2^{s-1},\qquad r=s-1\geq4.
\]
Let the eight holes be $h_1,\ldots,h_8$.  As in the proof of
Theorem~\ref{thm:eight-hole-zero-sum}, the projected crossing profile
$\mathbf m$ has size $N-8$ and satisfies
\[
 \Sigma:=\sigma(\mathbf m)=h_1+\cdots+h_8\neq0.
\]

If no hole equals $\Sigma$, Proposition~\ref{prop:generic-eight} gives the
deleted boundaries.  If $\Sigma$ is a hole and a paired-lift datum exists,
Proposition~\ref{prop:paired-lift} gives them.  Otherwise
Lemma~\ref{lem:failure-classification} places the configuration in one of the
$23$ printed relation-space orbits.  When $r=4$, the unique rank-five
representative cannot occur because its hole span has dimension five; every
other printed representative is
realized in $\F_2^4$.  Thus Lemma~\ref{lem:certificate-transport} applies in
rank four, and it applies to all entries when $r\geq5$.  Lemmas
~\ref{lem:finite-exceptional} and~\ref{lem:strong-projector}, followed by
Proposition~\ref{prop:filtered-minor}, therefore give deleted sets $A,B$ with
\[
 [z^{\mathbf m}]\det C(z)_{A^c,B^c}\neq0.
\]
In every branch, \eqref{eq:minor-certificate} supplies the residual
bijection, and Lemma~\ref{lem:boundary-assembly} completes the solution.
\end{proof}

\begin{proof}[Proof of Theorem~\ref{thm:main}\textup{(ii)}]
If the eight holes have total sum zero, apply
Theorem~\ref{thm:eight-hole-zero-sum}.  Otherwise apply
Theorem~\ref{thm:eight-hole-nonzero-sum}.
\end{proof}

\begin{corollary}[Eight even-weight requests]\label{cor:eight-even}
Every admissible request instance in $\F_2^s$, $s\geq4$, with exactly eight
even-weight requests has a solution.
\end{corollary}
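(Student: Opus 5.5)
This follows Corollary~\ref{cor:four-even}: we apply Theorem~\ref{thm:main}\textup{(ii)} to the parity hyperplane $H_{\rm even}=\{x\in\F_2^s:\wt(x)\equiv0\pmod2\}$. The plan has three checks. First, $H_{\rm even}$ is a hyperplane of $\F_2^s$ for $s\geq1$: it is the kernel of the nonzero linear functional $x\mapsto\sum_i x_i$, so it has codimension one. Second, a request $v\in\cD$ lies in $H_{\rm even}$ exactly when $\wt(v)$ is even. Hence the $H_{\rm even}$-holes of $\cD$, counted with multiplicity, are precisely its even-weight requests. An instance with exactly eight even-weight requests is therefore an eight-hole instance relative to $H_{\rm even}$. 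Third, the hypotheses of the corollary, namely admissibility and $s\geq4$, are exactly those of Theorem~\ref{thm:main}\textup{(ii)}.

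With these checks, Theorem~\ref{thm:main}\textup{(ii)} yields a solution. It combines the zero-sum case (Theorem~\ref{thm:eight-hole-zero-sum}) and the nonzero-sum case (Theorem~\ref{thm:eight-hole-nonzero-sum}), and each of these applies to any fixed hyperplane $H$. So there is no real obstacle here. The only thing to confirm is that nothing in the eight-hole argument needs $H$ to be in a special position relative to the coordinates. That holds: the proof identifies $K=H\cong\F_2^{s-1}$ abstractly, picks any $\alpha\notin H$ (for $H_{\rm even}$, any odd-weight vector, e.g.\ a standard basis vector), and uses only the evaluation pairing on $\whK$.
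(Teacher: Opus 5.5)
Your proposal is correct and is exactly the paper's argument: the paper also takes $H_{\rm even}$, observes that its internal requests are precisely the even-weight requests, and invokes Theorem~\ref{thm:main}\textup{(ii)}. Your additional checks (that $H_{\rm even}$ is the kernel of a nonzero functional, and that the eight-hole proof treats $H$ only abstractly) simply spell out what that one-line proof takes for granted.
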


\begin{proof}
Use the parity hyperplane $H_{\rm even}$; its internal requests are exactly
the even-weight requests, and apply Theorem~\ref{thm:main}\textup{(ii)}.
\end{proof}

\subsection{Scope of the finite verification}
The theoretical reductions leave exactly three finite assertions, isolated
in Lemmas~\ref{lem:paired-local-signature},
\ref{lem:failure-classification}, and
\ref{lem:finite-exceptional}.  Table~\ref{tab:finite-verifications} records,
for each of these lemmas, the finite universe checked and the precise
conclusion used in the proof.  The appendix lists the $23$ exceptional
configurations and $45$ integer witnesses.  The exact source programs, inputs,
recorded outputs, checksum manifest, and reproduction instructions for these
three proof-critical verifications are archived in the immutable Zenodo
record~\cite{KreindelZabokritskiyVerification2026}; all arithmetic is exact
over $\F_2$ or the integers.

The point at which this method ceases to be uniform is now visible.  For the
two-direction filters used here, two independent directions generate a
four-element translation space $T$.  A four-element common support is one
$T$-coset, and an eight-element common support is two $T$-cosets, hence an
affine three-flat.  A sixteen-element support may be a union of four
$T$-cosets without being affine; moreover every such union has zero character
sum.  Thus neither general coefficient law alone controls the next
power-of-two level.  For an even number of holes that is not a power of two,
the clean block recursion and valuation laws proved above are not available
in the first place.  The Fourier reduction continues for every deletion size,
but these arithmetic and support classifications do not.

\appendix
\setcounter{table}{0}
\renewcommand{\thetable}{\thesection\arabic{table}}
\section{Exceptional eight-hole certificates}
\label{app:certificates}

The explicit data below are the finite input needed to complete the proof of
Lemma~\ref{lem:finite-exceptional}.  For each of the $23$ exceptional orbit
representatives, the list gives legal witnesses and the integer combination
of their local signatures required by Lemma~\ref{lem:strong-projector}.
The corresponding exact verifiers, inputs, and outputs are archived in the
Zenodo record~\cite{KreindelZabokritskiyVerification2026}.

\begin{table}[htbp]
\centering
\caption{The three exact finite verifications used in the unrestricted
eight-hole proof.}
\label{tab:finite-verifications}
\small
\begin{tabularx}{\textwidth}{@{}p{0.22\textwidth}XX@{}}
\toprule
check & finite universe & conclusion used in the proof \\
\midrule
local paired-filter signature
& $1024$ admissible basis patterns, $256$ external-sign patterns, and
$2{,}097{,}152$ common supports
& one surviving orbit coordinate, with coefficient of absolute value
$128$ or $384$ and valuation $7$ \\
relation-space classification
& all $417{,}199$ subspaces $\mathcal R$ of $\F_2^8$; $86{,}698$ satisfy
$e_i,\mathbf1\notin\mathcal R$ for every $i$ and
$\mathbf1+e_i\in\mathcal R$ for some $i$
& the $12{,}266$ paired-lift failures form exactly $23$ label-permutation
orbits \\
printed certificates
& $23$ entries and $45$ integer witnesses
& every legality, strong connectedness, support, valuation, and projector
identity required by the exceptional-orbit argument holds \\
\bottomrule
\end{tabularx}
\end{table}

\subsection{Certificate notation}

For a printed hole multiset $(h_1,\ldots,h_8)$, put
\[
 \Sigma=h_1+\cdots+h_8,
 \qquad
 m_{\rm tot}=|\{i:h_i=\Sigma\}|.
\]
Here and below all sums are in the relevant binary vector space.  The
representatives are grouped by the meaningful invariant $m_{\rm tot}$.  We
call the $j$th representative in the group with value $m$ \emph{Case
$m.j$}.  Thus the first index records the multiplicity of the hole sum, while
the second merely enumerates the representatives within that group.  Case
$3.4$ is the unique listed representative whose hole span has dimension
five.  Every other printed representative is realized in the ambient space
$\F_2^4$, although its hole span may have smaller dimension.

Integers are decimal encodings of binary vectors.  Thus
\[
 u=\sum_{j=0}^{d-1}u_j2^j
 \quad\hbox{encodes}\quad
 (u_0,\ldots,u_{d-1})\in\F_2^d,
\]
and vector addition is bitwise exclusive-or.  For example, in four
dimensions the hole value $9=1+8$ denotes $(1,0,0,1)$.  Hole values are
always ambient vectors written in standard coordinates.  Repeated numerical
values represent distinct labeled copies of the same hole vector.  In
particular, a certificate heading such as ``hole sum $=8$'' is shorthand for
a vector equation in $\F_2^d$: the vector
$\Sigma=h_1+\cdots+h_8$ has decimal code $8$.  Thus ``hole sum $=8$'' means
$\Sigma=(0,0,0,1)\in\F_2^4$, while the Case~$3.4$ heading
``hole sum $=16$'' means
$\Sigma=(0,0,0,0,1)\in\F_2^5$; neither is an ordinary integer sum.

The bracket
\[
c_j\,[(p_1,\ldots,p_4);(\alpha_1,\ldots,\alpha_4)
 \mid(q_1,\ldots,q_4);(\beta_1,\ldots,\beta_4)]
\]
encodes the witness
$\mathcal W_j=(P;\boldsymbol\alpha\mid R;\boldsymbol\beta)$ from
Section~\ref{sec:eight-sum-hole}.  The vertical bar separates the left
and right boundary filters; each semicolon separates an ordered hole tuple
from its ordered direction tuple.  Pairing is positionwise:
$\alpha_i$ filters $p_i$ and $\beta_i$ filters $q_i$.  The multiset union of
$P$ and $R$ is the displayed eight-hole multiset, and the order is part of
the witness because it fixes the wedge-factor order.  The printed integer
$c_j$ weights the local signature $\mathcal H_{\mathcal W_j}$ in
\[
 \sum_j c_j\mathcal H_{\mathcal W_j}
   =c_{\mathrm{proj}} e_{\mathcal O},
 \qquad c_{\mathrm{proj}}\neq0,
\]
where
$\mathcal O\in\mathfrak O_\Lambda=\binom{\Lambda}{8}/\Lambda$ is a free orbit
of eight-subsets under translation and $e_{\mathcal O}$ is the corresponding
standard coordinate vector in $\mathbb Z^{\mathfrak O_\Lambda}$.  The target
orbit may differ from one entry to another.

The printed direction integers are four-bit coordinate masks in the displayed
ordered basis $(\lambda_1,\lambda_2,\lambda_3,\lambda_4)$ of the local character space
$\Lambda$.  The mask $a=\sum_{j=0}^3a_j2^j$ denotes the actual character
$\widetilde a=\sum_{j=0}^3a_j\lambda_{j+1}$.  Character evaluation is the parity
of the common one-bits of $\widetilde a$ and the ambient hole value; its Walsh
sign is $\eps_{\widetilde a}(p)=(-1)^{\widetilde a(p)}$.  Thus the decoded
directions in every printed witness satisfy the legality conditions
\[
 \widetilde\alpha_i(p_i)=\widetilde\beta_i(q_i)=1
 \qquad(1\leq i\leq4).
\]
The verifier also checks
\[
 \sum_{i=1}^4\widetilde\alpha_i
 =\sum_{i=1}^4\widetilde\beta_i=\kappa,
 \qquad
 \kappa(\Sigma)=1,
\]
which is the character-sum condition used in the projector criterion.
Except for Case~$3.4$, the ordered basis is $(1,2,4,8)$, so a direction mask
happens to have the same numerical encoding as the actual character.  For
Case~$3.4$,
\[
 \Lambda=\langle3,4,8,16\rangle\leq(\F_2^5)^*.
\]
Consequently mask $1$ denotes the character $3$, mask $7$ denotes
$3+4+8=15$, and mask $9$ denotes $3+16=19$; these masks do not denote the
ambient vectors $1$, $7$, and $9$.

\subsection{Certificate list}

For each single-witness entry,
$\mathcal H_{\mathcal W}=c_{\mathrm{proj}} e_{\mathcal O}$ and the unique
nonzero coordinate has $2$-adic valuation
$\vTwo(c_{\mathrm{proj}})=7$, meaning
$2^7\mid c_{\mathrm{proj}}$ but $2^8\nmid c_{\mathrm{proj}}$.  The four
multi-witness entries, Cases~$2.3$, $2.5$, $4.2$, and $6.1$, instead print all
coefficients $c_j$ and then state the resulting projector scalar
$c_{\mathrm{proj}}$
explicitly.  Throughout the list, numbers inside hole multisets and witness
brackets, as well as the displayed hole sums, are decimal codes of binary
vectors or characters; only the coefficients $c_j$ and projector scalars
$c_{\mathrm{proj}}$ are ordinary integers.  The exact target coordinates are
reconstructed from the printed witnesses and recorded in the verification
output archived in the Zenodo
record~\cite{KreindelZabokritskiyVerification2026}; the proof uses only that
the resulting coordinate is a free translation orbit and that its coefficient
is nonzero.

\medskip
\noindent\textbf{Cases with $m_{\rm tot}=1$.}

\paragraph{Case 1.1.} Hole multiset $\{1,1,2,4,6,8,9,9\}$; hole sum $=8$; local character basis $(1,2,4,8)$.
\begin{align*}
&1\,[(1,1,2,4);(1,1,14,6)\mid (6,8,9,9);(11,10,3,10)]
\end{align*}

\paragraph{Case 1.2.} Hole multiset $\{1,1,1,2,2,2,3,8\}$; hole sum $=8$; local character basis $(1,2,4,8)$.
\begin{align*}
&1\,[(1,1,1,2);(1,9,15,15)\mid (2,2,3,8);(3,11,14,14)]
\end{align*}

\paragraph{Case 1.3.} Hole multiset $\{1,1,1,2,3,8,9,9\}$; hole sum $=8$; local character basis $(1,2,4,8)$.
\begin{align*}
&1\,[(1,1,1,2);(1,9,7,7)\mid (3,8,9,9);(14,11,3,14)]
\end{align*}

\paragraph{Case 1.4.} Hole multiset $\{1,2,3,4,5,6,7,8\}$; hole sum $=8$; local character basis $(1,2,4,8)$.
\begin{align*}
&1\,[(1,2,3,4);(1,6,1,14)\mid (5,6,7,8);(4,12,15,15)]
\end{align*}

\paragraph{Case 1.5.} Hole multiset $\{1,2,3,8,11,11,11,11\}$; hole sum $=8$; local character basis $(1,2,4,8)$.
\begin{align*}
&1\,[(1,2,3,8);(1,3,1,10)\mid (11,11,11,11);(2,2,5,12)]
\end{align*}

\paragraph{Case 1.6.} Hole multiset $\{1,1,1,1,2,4,6,8\}$; hole sum $=8$; local character basis $(1,2,4,8)$.
\begin{align*}
&1\,[(1,1,1,1);(1,7,9,7)\mid (2,4,6,8);(2,13,13,10)]
\end{align*}

\paragraph{Case 1.7.} Hole multiset $\{1,1,1,1,1,2,3,8\}$; hole sum $=8$; local character basis $(1,2,4,8)$.
\begin{align*}
&1\,[(1,1,1,1);(1,1,15,7)\mid (1,2,3,8);(11,11,2,10)]
\end{align*}

\medskip
\noindent\textbf{Cases with $m_{\rm tot}=2$.}

\paragraph{Case 2.1.} Hole multiset $\{1,2,2,8,8,9,10,10\}$; hole sum $=8$; local character basis $(1,2,4,8)$.
\begin{align*}
&1\,[(1,2,2,8);(1,7,7,9)\mid (8,9,10,10);(13,10,2,13)]
\end{align*}

\paragraph{Case 2.2.} Hole multiset $\{1,1,1,1,2,8,8,10\}$; hole sum $=8$; local character basis $(1,2,4,8)$.
\begin{align*}
&1\,[(1,1,1,2);(1,9,7,7)\mid (1,8,8,10);(11,14,14,3)]
\end{align*}

\paragraph{Case 2.3.} Hole multiset $\{1,1,1,8,8,9,9,9\}$; hole sum $=8$; local character basis $(1,2,4,8)$.
\begin{align*}
&1\,[(1,1,1,8);(1,1,1,9)\mid (8,9,9,9);(11,5,8,14)]\\
&1\,[(1,1,1,8);(1,1,1,9)\mid (8,9,9,9);(11,7,8,12)]\\
&-3\,[(1,1,1,8);(1,1,1,9)\mid (8,9,9,9);(13,3,8,14)]\\
&1\,[(1,1,1,8);(1,1,1,9)\mid (8,9,9,9);(13,7,8,10)]\\
&1\,[(1,1,1,8);(1,1,1,9)\mid (8,9,9,9);(15,3,8,12)]\\
&1\,[(1,1,1,8);(1,1,1,9)\mid (8,9,9,9);(15,5,8,10)]
\end{align*}
The exact combined local orbit signature is $768e_{\mathcal O}$.

\paragraph{Case 2.4.} Hole multiset $\{1,2,3,8,8,9,10,11\}$; hole sum $=8$; local character basis $(1,2,4,8)$.
\begin{align*}
&1\,[(1,2,3,8);(1,6,6,8)\mid (8,9,10,11);(10,5,3,5)]
\end{align*}

\paragraph{Case 2.5.} Hole multiset $\{1,1,1,1,1,8,8,9\}$; hole sum $=8$; local character basis $(1,2,4,8)$.
\begin{align*}
&-1\,[(1,1,1,1);(1,1,1,9)\mid (1,8,8,9);(3,8,13,14)]\\
&-1\,[(1,1,1,1);(1,1,1,9)\mid (1,8,8,9);(11,8,12,7)]\\
&1\,[(1,1,1,1);(1,1,1,9)\mid (1,8,8,9);(13,8,14,3)]\\
&-1\,[(1,1,1,1);(1,1,1,9)\mid (1,8,8,9);(15,8,10,5)]
\end{align*}
The exact combined local orbit signature is $384e_{\mathcal O}$.

\paragraph{Case 2.6.} Hole multiset $\{1,2,4,8,8,9,10,12\}$; hole sum $=8$; local character basis $(1,2,4,8)$.
\begin{align*}
&1\,[(1,2,4,8);(3,7,7,11)\mid (8,9,10,12);(9,14,6,9)]
\end{align*}

\medskip
\noindent\textbf{Cases with $m_{\rm tot}=3$.}

\paragraph{Case 3.1.} Hole multiset $\{1,1,1,2,3,8,8,8\}$; hole sum $=8$; local character basis $(1,2,4,8)$.
\begin{align*}
&1\,[(2,3,8,8);(2,10,11,10)\mid (1,1,1,8);(1,1,7,14)]
\end{align*}

\paragraph{Case 3.2.} Hole multiset $\{1,1,2,4,6,8,8,8\}$; hole sum $=8$; local character basis $(1,2,4,8)$.
\begin{align*}
&1\,[(1,1,6,8);(1,5,5,9)\mid (2,4,8,8);(3,12,11,12)]
\end{align*}

\paragraph{Case 3.3.} Hole multiset $\{1,1,2,8,8,8,9,11\}$; hole sum $=8$; local character basis $(1,2,4,8)$.
\begin{align*}
&1\,[(1,2,8,9);(3,10,11,10)\mid (1,8,8,11);(5,13,15,15)]
\end{align*}

\paragraph{Case 3.4.} Hole multiset $\{1,2,4,8,15,16,16,16\}$; hole sum $=16$; local character basis $(3,4,8,16)$.
\begin{align*}
&1\,[(1,2,8,16);(1,7,7,9)\mid (4,15,16,16);(2,10,14,14)]
\end{align*}

\paragraph{Case 3.5.} Hole multiset $\{1,2,4,8,8,8,9,14\}$; hole sum $=8$; local character basis $(1,2,4,8)$.
\begin{align*}
&1\,[(2,8,8,14);(2,15,10,15)\mid (1,4,8,9);(1,5,9,5)]
\end{align*}

\medskip
\noindent\textbf{Cases with $m_{\rm tot}=4$.}

\paragraph{Case 4.1.} Hole multiset $\{1,2,2,8,8,8,8,9\}$; hole sum $=8$; local character basis $(1,2,4,8)$.
\begin{align*}
&1\,[(1,2,2,8);(3,2,3,10)\mid (8,8,8,9);(9,9,15,7)]
\end{align*}

\paragraph{Case 4.2.} Hole multiset $\{1,1,1,8,8,8,8,9\}$; hole sum $=8$; local character basis $(1,2,4,8)$.
\begin{align*}
&-1\,[(1,1,1,8);(1,1,1,9)\mid (8,8,8,9);(8,10,13,7)]\\
&-1\,[(1,1,1,8);(1,1,1,9)\mid (8,8,8,9);(8,10,15,5)]\\
&-1\,[(1,1,1,8);(1,1,1,9)\mid (8,8,8,9);(8,11,12,7)]\\
&-1\,[(1,1,1,8);(1,1,1,9)\mid (8,8,8,9);(8,11,14,5)]\\
&-1\,[(1,1,1,8);(1,1,1,9)\mid (8,8,8,9);(8,12,15,3)]\\
&3\,[(1,1,1,8);(1,1,1,9)\mid (8,8,8,9);(8,13,14,3)]
\end{align*}
The exact combined local orbit signature is $768e_{\mathcal O}$.

\paragraph{Case 4.3.} Hole multiset $\{1,2,4,8,8,8,8,15\}$; hole sum $=8$; local character basis $(1,2,4,8)$.
\begin{align*}
&1\,[(1,2,4,8);(3,14,14,11)\mid (8,8,8,15);(9,13,13,1)]
\end{align*}

\medskip
\noindent\textbf{Cases with $m_{\rm tot}=5$.}

\paragraph{Case 5.1.} Hole multiset $\{1,2,3,8,8,8,8,8\}$; hole sum $=8$; local character basis $(1,2,4,8)$.
\begin{align*}
&1\,[(1,2,8,8);(11,3,11,10)\mid (3,8,8,8);(6,8,8,15)]
\end{align*}

\medskip
\noindent\textbf{Cases with $m_{\rm tot}=6$.}

\begin{samepage}
\allowdisplaybreaks[0]
\paragraph{Case 6.1.} Hole multiset $\{1,8,8,8,8,8,8,9\}$; hole sum $=8$; local character basis $(1,2,4,8)$.
\begin{align*}
&-21\,[(1,8,8,8);(1,8,10,11)\mid (8,8,8,9);(9,9,13,5)]\\
&15\,[(1,8,8,8);(1,8,10,11)\mid (8,8,8,9);(9,9,15,7)]\\
&21\,[(1,8,8,8);(1,8,12,13)\mid (8,8,8,9);(9,9,11,3)]\\
&36\,[(1,8,8,8);(1,8,12,13)\mid (8,8,8,9);(9,9,15,7)]\\
&-15\,[(1,8,8,8);(1,8,14,15)\mid (8,8,8,9);(9,9,11,3)]\\
&-36\,[(1,8,8,8);(1,8,14,15)\mid (8,8,8,9);(9,9,13,5)]\\
&-32\,[(1,8,8,8);(1,9,9,9)\mid (8,8,8,9);(8,10,13,7)]\\
&28\,[(1,8,8,8);(1,9,9,9)\mid (8,8,8,9);(8,10,15,5)]\\
&-25\,[(1,8,8,8);(1,9,9,9)\mid (8,8,8,9);(8,11,12,7)]\\
&23\,[(1,8,8,8);(1,9,9,9)\mid (8,8,8,9);(8,11,14,5)]
\end{align*}
The exact combined local orbit signature is $2304e_{\mathcal O}$.
\end{samepage}

The $23$ entries above contain $45$ witnesses.  The exact verifier archived in
the Zenodo record~\cite{KreindelZabokritskiyVerification2026} expands
them over the integers and checks legality, strong
$\Lambda$-connectedness, the common-support character-sum condition, and the
stated coordinate-projector identities.  Hence every hypothesis of
Lemma~\ref{lem:strong-projector} holds for each exceptional representative,
completing the finite assertion in Lemma~\ref{lem:finite-exceptional}.

\end{document}